\newtheorem{thm}{Theorem}[section]
\newtheorem{prop}[thm]{Proposition}
\newtheorem{cor}{Corollary}[section]
\newtheorem{lem}[thm]{Lemma}
\theoremstyle{definition}
\newtheorem{defn}{Definition}[section]
\theoremstyle{remark}
\newtheorem*{rmk}{Remark}
\newcommand{\VAR}{{\mathrm{Var}}}
\newcommand{\PAR}[1]{{{\left(#1\right)}}}
\newcommand{\ABS}[1]{{{\left| #1 \right|}}} % |1|
\algnewcommand\INPUT{\item[\textbf{Input:}]}%
\algnewcommand\OUTPUT{\item[\textbf{Output:}]}%
\title{Learning Sparse Graphons and the Generalized Kesten-Stigum Threshold}
\author{Emmanuel Abbe \footnote{EPFL and Princeton University} 
		\,\, Shuangping Li  \footnote{Princeton University}
		\,\, Allan Sly  \footnote{Princeton University}}
\date{}
\begin{document}
\maketitle
\begin{abstract}
The problem of learning graphons has attracted considerable attention across several scientific communities, with significant progress over the recent years in sparser regimes. Yet, the current techniques still require diverging degrees in order to succeed with efficient algorithms in the challenging cases where the local structure of the graph is homogeneous. This paper provides an efficient algorithm to learn graphons in the constant expected degree regime. The algorithm is shown to succeed in estimating the rank-$k$ projection of a graphon in the $L_2$ metric if the top $k$ eigenvalues of the graphon satisfy a generalized Kesten-Stigum condition.
\end{abstract}

\section{Introduction}
The problem of estimating graphons has been extensively studied in the statistics, mathematics, machine learning and network science literature \cite{choi2012stochastic,wolfe2013nonparametric,gao2015rate,klopp2017oracle,borgs2015private,borgs2015consistent,abbe2015recovering, mossel2015reconstruction,bickel2009nonparametric,bickel2011method,kallenberg1999multivariate,rohe2011spectral,abbe2015community,massoulie2014community,xu2014edge,airoldi2013stochastic,zhang2017estimating,anandkumar2014tensor,amini,degree_corr,AS3cpam,decelle2011asymptotic}. The term graphon is used here to designate the non-parametric function that governs the edge probabilities in an inhomogeneous or exchangeable random graph model. Namely, for a nonnegative symmetric measurable function $f$ on $[0,1]\times [0,1]$, a random graph $G=([n],E)$ is drawn by generating $X_1,\cdots,X_n$ i.i.d.\ Uniform on $[0,1]$, and by drawing the edges in $E$ independently conditionally on $X_1,\cdots,X_n$ such that 
\begin{align}
    \mathbb{P}(E_{ij}=1|X_1=x_1,\cdots,X_n=x_n)=\mathbb{P}(E_{ij}=1|X_i=x_i,X_j=x_j)=f(x_i,x_j).
\end{align}
The original works focused on the regime where the function $f$ is constant (i.e., not decaying with $n$), called the dense regime,  which corresponds to the regime where the graphon admits a topological interpretation in terms of graph limits \cite{borgs2008convergent, borgs2012convergent}. The ``sparse graphon'' setting has been used to refer to the case where
\begin{align}
    f=f_n=\rho_n Q, \quad Q: [0,1]\times [0,1] \to \mathbb{R}_+,
\end{align}
where $\rho_n$ is called the scale parameter or sparsity parameter, as it can be thought of as the probability with which one keeps edges independently in the graph (deleting others). A special case of interest is when the function $Q$ is piecewise constant, i.e,. stochastic block models (SBMs) \cite{holland,bui,dyer,boppana,bollo_inhomo,abbe2017community}, in which case the function $Q$ is equivalently defined by a $k\times k$ symmetric matrix where $k$ is the number of flats in $Q$.

The main question studied in the context of `learning' or `estimating' graphons at a scale $\rho_n$ is to estimate $Q$ up to some level of accuracy\footnote{E.g., in the $L_2$ metric and up to measure preserving maps.} given a single sample of the random graph $G$. 

Most of the papers focus on the regime where $\rho_n=d_n/n$ with diverging degrees $d_n$, and in particular on the statistical point of view, i.e., obtaining consistent estimators for $Q$ (up to measure-preserving maps) and convergence rates, without taking necessarily the computational efficiency into consideration. In particular, this is investigated by \cite{choi2012stochastic} for SBMs and by \cite{wolfe2013nonparametric,gao2015rate} under H\"{o}lder smoothness assumption and also SBMs, where \cite{wolfe2013nonparametric} shows the consistency and convergence rate of the maximum likelihood estimator  in the regime where $\rho_n=\log^3(n)/n$. \cite{gao2015rate} also considers  H\"{o}lder smoothness and SBMs and shows that a histogram approximation with least squares criterion achieves an optimal rate of convergence in the dense case where $\rho_n=1$. These results are further extended and improved in \cite{klopp2017oracle}, which obtains oracle inequalities for the moderately sparse regime where $\log(k)/n \wedge (k/n)^2 \le \rho_n \le \log(k)/{\sqrt{kn}} \wedge (k/n)^{3/2}$, where $k$ is the number of blocks for the block constant oracle. A similar result is also obtained in \cite{borgs2015private} for the case where the degrees diverge, with in addition a node-differentially private procedure to estimate the graphon. Further, \cite{borgs2015consistent} manages to remove the boundedness assumption when studying the least-square estimator for arbitrary integrable graphons with possibly long tails in their degree distribution (and more general latent spaces), showing also that the efficient algorithm based on degree sorting works whenever the underlying degree distribution is atomless. Finally, some papers show that in regimes of diverging degrees and diverging signal-to-noise ratio, it is possible to estimate efficiently and consistently the SBM parameters for two communities \cite{amini}, certain degree-corrected SBMs \cite{degree_corr}, and any identifiable SBM with diverging degrees and a finite number of communities \cite{abbe2015recovering}. 

To the best of our knowledge, none of the above approaches either work or have been proved to succeed in non-trivial cases where $\rho_n=1/n$, in particular for efficient algorithms. There are a few exceptions to that. In \cite{bickel2011method}, it is shown that for SBMs with different expected degrees across communities, it is possible to estimate the parameters of the graph using moments. This is however a regime where the local structure of the graph is non-homogeneous, i.e., dependent on the community membership, and where moment methods can be applied directly (see Section \ref{results} for further discussion). A new line of work has then been initiated in the homogeneous case, where the expected degrees are both constant in magnitude and across communities. In this case, \cite{mossel2015reconstruction} shows that the parameters can be estimated for the special case of two symmetric communities, extended in \cite{AS3cpam} for $k$ symmetric communities, as long as the model parameters are above the so-called Kesten-Stigum threshold (see Section \ref{results}). In particular, this is the regime subject to a sharp phase transition phenomenon since below the KS threshold, \cite{mossel2015reconstruction} shows that a consistent estimator cannot be obtained for two symmetric communities. A similar picture is conjectured for SBMs with several symmetric communities \cite{decelle2011asymptotic}, although it was shown in \cite{AS3cpam} that non-efficient estimators could be devised below the KS threshold starting from 4 communities. More generally, the question of estimating SBMs in the constant-degree regime with general (non-symmetric) $Q$'s remained open in this line of work, let alone the case of non-piecewise constant graphons.  

This paper tackles this open problem,  providing an efficient algorithm to learn constant-degree graphons (and SBMs) under a generalized KS threshold condition. We next describe the results and algorithm.

\section{Main Results}\label{results}
%We consider the following sparse graphon problem. Let $Q$ be a nonnegative symmetric bounded measurable function on $[0,1]\times [0,1]$. 
%\emma{updated this part based on intro}

%Let $X_1,\cdots,X_n$ follow iid Uniform $[0,1]$ distribution. Define a random graph $G=(V,E)$ such that $V=\{V_1,\cdots,V_n\}$ and for each pair of vertices $V_i$ and $V_j$, they are connected by an edge with probability $Q(X_i,X_j)/n$ mutually independently.\\~\\
%Now the question is, given a single sample of the random graph $G$, how much can you say about the graphon $Q$?\\~\\

Because we have no information about the vertex indices, we can only hope to learn the graphon $Q$ up to a measure preserving map. The following is a classical distance used in this context. 
\begin{defn}
Let $\phi:[0,1]\rightarrow [0,1]$ be a measure preserving map, and for a function $W$ on $[0,1]\times [0,1]$, define $W^{\phi}(x,y)=W(\phi(x),\phi(y))$. We define the distance $\delta_2(W,W')=\inf\limits_{\phi_1,\phi_2}\|W^{\phi_1}-W'^{\phi_2}\|_2$, where we take the infimum over measure-preserving maps $\phi_i:[0,1]\rightarrow [0,1]$, $i=1,2$.
\end{defn}

To formulate our theorem, we make the following assumptions.

\noindent\textbf{Assumption 1.} The graphon $Q$ is bounded, i.e., we assume that $M:=\sup\limits_{(x,y)\in [0,1]^2} |Q(x,y)|$ is finite. 

Since $Q$ is bounded and symmetric, it admits an eigenvalue decomposition. We denote the eigenvalues in a descending order in absolute value: $\mu_1\geq |\mu_2|\geq |\mu_3|\geq \cdots $. Let $f_i$ be the corresponding eigenfunctions. Denote $Q_K$ to be a rank $K$ approximation of $Q$. In particular, $Q_K(x,y)=\sum\limits_{i=1}^K \mu_i f_i(x)f_i(y)$ is unique when $\mu_K\neq \mu_{K+1}$.

\noindent \textbf{Assumption 2.} We assume that $\int_0^1Q(x,y)dy$ takes the same constant value $q$ for every $x\in [0,1]$. This is the constant expected degree condition.

We are now ready to present our results. 
\begin{thm}
Let $Q$ satisfy Assumptions $1$ and $2$, let $r_0$ be such that $|\mu_{r_0}|>\sqrt{\mu_1}\geq |\mu_{r_0+1}|$, and assume that $\mu_i$ is simple for all $i \in [r_0]$. Then Algorithm $1$ produces an estimator $\hat Q$ such that\footnote{We use $ \stackrel{(P)}{\longrightarrow}$ for the convergence in probability as $n \to \infty$.}
\begin{align*}
\delta_2(\hat Q, Q_{r_0}) \stackrel{(P)}{\longrightarrow} 0.
\end{align*}
%and runs in $O(n \log n)$ time.
\end{thm}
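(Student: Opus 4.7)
The plan is to prove the theorem by analyzing a spectral procedure based on counting self-avoiding (or non-backtracking) walks in $G$, which is known to extract signal eigenvalues above the Kesten--Stigum threshold even when the local structure is homogeneous. The overall strategy is: (i) show that for an appropriate walk-length $\ell=\Theta(\log n)$, the matrix $M^{(\ell)}$ whose $(i,j)$-entry counts such walks between vertices $i$ and $j$ has top-$r_0$ spectrum approximately equal to $\{\mu_k^\ell\}_{k\le r_0}$, with eigenvectors aligned (after normalization) with the sampled eigenfunction vectors $(f_k(X_1),\ldots,f_k(X_n))$; (ii) use these estimated eigenvalues/eigenvectors to build a piecewise-constant function $\hat Q$ on $[0,1]^2$; (iii) conclude via the measure-preserving map induced by the empirical distribution of $X_1,\dots,X_n$.

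The first analytical step is to compute the conditional expectation $\mathbb{E}[M^{(\ell)}_{ij} \mid X] \approx \sum_k \mu_k^\ell f_k(X_i) f_k(X_j)$, where restricting to self-avoiding walks removes the local-tree-backtracking bias that would otherwise swamp the signal. Assumption~2 (constant expected degree $q=\mu_1$) makes the corrections from the self-avoidance constraint tractable, since the typical number of length-$\ell$ walks from a vertex is $q^\ell$ and tangles are rare. For the "signal" eigenvalues $|\mu_k|>\sqrt{\mu_1}$ with $k\le r_0$, this expectation dominates; for the remaining "noise" components, their contribution to the spectrum of $M^{(\ell)}$ is at most of order $(\sqrt{\mu_1})^\ell$, by analogues of the Bordenave--Lelarge--Massoulié Ihara--Bass-type bounds for the non-backtracking matrix of sparse inhomogeneous random graphs. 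Consequently the top $r_0$ eigenvalues of $M^{(\ell)}$ separate from the bulk by a factor $(|\mu_{r_0}|/\sqrt{\mu_1})^\ell$, which is polynomial in $n$ for $\ell=c\log n$, giving the required spectral gap.

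Second, I would control the fluctuations by a second-moment computation on pairs of self-avoiding walks. The pair-count reduces to enumerating tangled pairs of walks (two walks sharing a non-trivial subpath) and here sparsity helps: such tangles contribute only lower-order terms provided $\ell$ is below the girth threshold $\tfrac12\log_q n$. This yields concentration of $M^{(\ell)}/\mu_k^\ell$ around its mean in operator norm, after which Davis--Kahan gives that the $k$-th eigenvector $\hat v_k$ of $M^{(\ell)}$ is close, up to a sign, to the vector $(f_k(X_i)/\sqrt n)_i$, and $\hat\mu_k := \lambda_k(M^{(\ell)})^{1/\ell}$ is a consistent estimator of $\mu_k$ (with the simplicity of each $\mu_k$ for $k\le r_0$ ensuring the eigenspaces are well separated). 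Given these estimates, I would define $\hat Q$ to be piecewise constant on cells $[(i-1)/n, i/n]\times[(j-1)/n, j/n]$ with value $n\sum_{k\le r_0}\hat\mu_k \hat v_k(i)\hat v_k(j)$, and verify that the map $x\mapsto X_{\lceil nx\rceil}$ transforms this approximately into $Q_{r_0}$ in $L_2$, using Glivenko--Cantelli for the uniformity of the $X_i$'s.

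The hardest step will be the spectral approximation of $M^{(\ell)}$ by its signal part in the truly sparse regime. A constant-degree random graph has a positive density of short cycles, so self-avoiding walks cannot be analyzed as simple sums of independent contributions; the variance calculation must carefully track tangles and cycle structure, and sharp enough bounds must hold uniformly across all $r_0$ signal directions rather than just the second eigenvector. Compared with prior two-block analyses, the additional difficulties here are (a) handling a general bounded, possibly non-piecewise-constant graphon (so the eigenfunctions are not indicator-like and the combinatorial symmetry disappears), and (b) recovering eigenvalues simultaneously at different scales $|\mu_k|^\ell$, which forces a non-trivial deflation argument when passing from the top eigenvector to lower-indexed ones while preserving concentration at the KS-critical scale.
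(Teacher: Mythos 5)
Your plan rests on the step where you apply Davis--Kahan to conclude that the $k$-th eigenvector of the self-avoiding walk matrix $M^{(\ell)}$ is close in $\ell_2$ (up to sign and normalization) to the sampled eigenfunction vector $(f_k(X_1),\ldots,f_k(X_n))/\sqrt{n}$, and you then define $\hat Q$ directly by plugging the eigenvector entries into a piecewise constant function. This is where the argument fails in the constant-degree regime. Above the Kesten--Stigum threshold, the eigenvectors of the non-backtracking (or self-avoiding walk) matrix have a non-vanishing \emph{per-coordinate} fluctuation: the inner product $\frac{1}{\sqrt{n}}\sum_v B_k(v) f_k(X_v)$ converges to a nonzero constant $\theta_k$, but the $\ell_2$ distance between the normalized eigenvector and $(f_k(X_i)/\sqrt n)_i$ does \emph{not} vanish --- the residual has variance of the same order as the signal. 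This is exactly the distinction between weak recovery and exact recovery; only the former holds here. Consequently $n\sum_k\hat\mu_k\hat v_k(\lceil nx\rceil)\hat v_k(\lceil ny\rceil)$ is not a consistent estimator of $Q_{r_0}$ in $\delta_2$: the per-coordinate noise contributes an $\Omega(1)$ error to the $L_2$ norm. A separate but related issue is that you treat the eigenvectors one at a time; even if each entry were individually noisy but unbiased, the reconstruction would still need the \emph{joint} law of $(f_1(U),\ldots,f_K(U))$, not just each marginal, since $Q_{r_0}$ is determined by this joint distribution only up to measure-preserving maps.

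The paper's proof avoids this obstruction entirely, and this is the crux of why the algorithm is more complicated than a direct spectral estimator. Rather than using the eigenvector entries as pointwise estimates of eigenfunction values, the paper shows (Lemma \ref{l:cconverge}) only an averaged correlation $\mathcal{C}_{ii}\to\theta_i$ and $\sum_\ell\mu_\ell\mathcal{C}_{i\ell}^2\to\mu_i\theta_i^2$. It then uses an independent held-out subset of edges $E_2$ to form weighted star counts $A_\alpha$, whose normalized limits (Lemma \ref{l:moment}) recover the \emph{joint moments} $\int f_{i_1}\cdots f_{i_k}\,dx$. From these moments it reconstructs the mollified joint distribution of $(f_1(U),\ldots,f_K(U))$ via a Legendre polynomial expansion with controlled tail (Lemmas \ref{l:moment1}--\ref{l:moment3}), then samples fresh vectors from that distribution to build $\hat f_i$ and $\hat Q$. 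None of these steps --- sample splitting, moment extraction via subgraph counts, mollification, polynomial reconstruction, and resampling --- appear in your outline, and they are not optional: they are the mechanism by which the algorithm escapes the fact that the eigenvectors themselves never concentrate entrywise in the sparse regime. To repair your argument you would need to replace the Davis--Kahan/piecewise-constant step with a moment-based reconstruction of the joint eigenfunction distribution, or otherwise supply a new argument showing entrywise concentration, which would contradict known facts about weak recovery at constant degree.
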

In other words, if there are $r_0$  eigenvalues that lie outside the graphon's bulk and are simple, one can estimate efficiently and consistently the rank-$r_0$ projection of the graphon.  
\begin{rmk}
Algorithm 1 runs with high probability in time $O(n \log n)$.
\end{rmk}
\begin{cor}
Let $Q$ be low-rank and satisfy Assumptions $1$ and $2$. Assume that all nonzero $\mu_i$'s are simple and satisfy $|\mu_i|> \sqrt{\mu_1}$. Then Algorithm $1$ produces an estimator $\hat Q$ such that
\begin{align*}
    \delta_2(\hat Q, Q)\stackrel{(P)}{\longrightarrow}  0.
\end{align*}
\end{cor}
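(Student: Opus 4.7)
My plan is to deduce the corollary directly from the theorem by choosing $r_0$ equal to the rank of $Q$ and verifying that the hypotheses transfer.

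First, I would let $k$ denote the rank of $Q$, so by definition $\mu_i \neq 0$ for $i\in[k]$ and $\mu_i=0$ for $i>k$. Since the corollary's hypothesis states that all nonzero $\mu_i$ are simple and satisfy $|\mu_i|>\sqrt{\mu_1}$, this yields in particular $|\mu_k|>\sqrt{\mu_1}$, while $|\mu_{k+1}|=0\leq \sqrt{\mu_1}$. Therefore the integer $r_0:=k$ satisfies the spectral-gap condition $|\mu_{r_0}|>\sqrt{\mu_1}\geq |\mu_{r_0+1}|$ demanded by the theorem, and each $\mu_i$ for $i\in[r_0]$ is simple by assumption. Thus the theorem applies with this choice of $r_0$, and I obtain that Algorithm~1 outputs an estimator $\hat Q$ satisfying $\delta_2(\hat Q,Q_{r_0})\xrightarrow{(P)} 0$.

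Next, I would argue that the rank-$r_0$ truncation equals the full graphon: because $Q$ has rank $k=r_0$, the spectral expansion terminates, i.e.,
\begin{align*}
Q(x,y)=\sum_{i=1}^{\infty}\mu_i f_i(x)f_i(y)=\sum_{i=1}^{r_0}\mu_i f_i(x)f_i(y)=Q_{r_0}(x,y),
\end{align*}
where the truncation $Q_{r_0}$ is well-defined and unique since $\mu_{r_0}\neq \mu_{r_0+1}=0$. Substituting this identity into the conclusion of the theorem immediately gives $\delta_2(\hat Q,Q)\xrightarrow{(P)} 0$, which is the statement of the corollary.

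There is essentially no obstacle beyond invoking the theorem correctly: the only subtlety is verifying that the hypothesis $|\mu_i|>\sqrt{\mu_1}$ for \emph{every} nonzero $\mu_i$ (not merely for some prescribed prefix) ensures that no nonzero eigenvalue is missed by choosing the cutoff at $r_0=k$, so that the rank-$r_0$ projection recovers $Q$ exactly rather than only a proper truncation of it.
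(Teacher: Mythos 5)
Your proof is correct and follows the same route the paper implicitly takes: the paper states this corollary as an immediate consequence of the theorem, and your argument simply fills in the short deduction by taking $r_0$ to equal the rank $k$ of $Q$, noting that $|\mu_k| > \sqrt{\mu_1} \geq 0 = |\mu_{k+1}|$ so the threshold condition holds, and observing that $Q_{r_0} = Q$ for a rank-$r_0$ kernel.
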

In other words, for a low-rank graphon $Q$, if all nonzero eigenvalues lie outside of the graphon's bulk and are simple, one can estimate $Q$ efficiently and consistently. In particular, this holds for constant expected degree stochastic block model when all nonzero eigenvalues are simple and satisfy $|\mu_i|>\sqrt{\mu_1}$.

For a graphon $Q$, and $h\in \mathbb{R}^+$, we define $\mathcal{Q}_h$ to be a scaled graphon such that $\mathcal{Q}_h(x,y)=h\cdot Q(x,y)$. 
\begin{cor}
Let $Q$ satisfy Assumptions $1$ and $2$ and assume that all nonzero $\mu_i$'s are simple. Then Algorithm $1$ produces an estimator $\widehat{\mathcal{Q}_h}$ such that
\begin{align*}
    \delta_2(\frac{1}{h}\widehat{\mathcal{Q}_h}, Q)\stackrel{(P)}{\longrightarrow} o_h(1).
\end{align*}
\end{cor}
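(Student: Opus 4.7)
The plan is to reduce Corollary 3 to Theorem 1 by simply running Algorithm 1 on the rescaled graphon $\mathcal{Q}_h = hQ$. First I would verify that $\mathcal{Q}_h$ itself satisfies Assumptions 1 and 2: it is bounded by $hM$, its eigendecomposition is $\mathcal{Q}_h = \sum_i h\mu_i\, f_i\otimes f_i$ so the eigenvalues $h\mu_i$ are simple whenever the $\mu_i$ are, and $\int_0^1 \mathcal{Q}_h(x,y)\,dy = hq$ for every $x$. Translating the generalized Kesten--Stigum condition of Theorem 1 from $\mathcal{Q}_h$ back to $Q$, the condition $|h\mu_i| > \sqrt{h\mu_1}$ becomes $|\mu_i| > \sqrt{\mu_1/h}$. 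I would let $R(h) := \#\{i : |\mu_i| > \sqrt{\mu_1/h}\}$; this is finite for every $h$ (since $\sum_i \mu_i^2 \le M^2 < \infty$) and is a non-decreasing step function of $h$ whose jump points form the countable set $\{\mu_1/\mu_i^2\}_{i\ge 1}$, away from which the strict separation $|h\mu_{R(h)}| > \sqrt{h\mu_1} \ge |h\mu_{R(h)+1}|$ required by Theorem 1 holds.

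Next I would invoke Theorem 1 on $\mathcal{Q}_h$ to obtain
\begin{align*}
\delta_2\bigl(\widehat{\mathcal{Q}_h},\, (\mathcal{Q}_h)_{R(h)}\bigr) \stackrel{(P)}{\longrightarrow} 0
\end{align*}
as $n \to \infty$ for fixed $h$. Because $(\mathcal{Q}_h)_{R(h)} = \sum_{i=1}^{R(h)} h\mu_i f_i\otimes f_i = h\, Q_{R(h)}$, and because $\delta_2$ is $1$-homogeneous in its arguments (measure-preserving maps commute with scalar multiplication and $\|\cdot\|_2$ is $1$-homogeneous), dividing through by $h$ yields $\delta_2\bigl(\tfrac{1}{h}\widehat{\mathcal{Q}_h},\, Q_{R(h)}\bigr) \stackrel{(P)}{\longrightarrow} 0$.

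I would then close the argument with the triangle inequality
\begin{align*}
\delta_2\bigl(\tfrac{1}{h}\widehat{\mathcal{Q}_h},\, Q\bigr) \;\le\; \delta_2\bigl(\tfrac{1}{h}\widehat{\mathcal{Q}_h},\, Q_{R(h)}\bigr) + \delta_2\bigl(Q_{R(h)},\, Q\bigr).
\end{align*}
The first term vanishes in probability as $n \to \infty$ by the previous step. The second term is deterministic, and by orthonormality of the tensor basis $\{f_i \otimes f_j\}$ equals $\bigl(\sum_{i>R(h)} \mu_i^2\bigr)^{1/2}$. Boundedness of $Q$ gives $\sum_i \mu_i^2 = \|Q\|_2^2 < \infty$, and as $h \to \infty$, $R(h)$ exhausts every index with $\mu_i \ne 0$, so this tail tends to $0$, which is precisely the claimed $o_h(1)$.

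I do not expect a substantial obstacle here; the only technical points are the $1$-homogeneity of $\delta_2$ (immediate from the definition) and the order of the two limits (first $n \to \infty$ in probability with $h$ fixed, then $h \to \infty$ deterministically), both of which are routine. The content of the corollary is really that scaling the graphon by $h$ pushes more eigenvalues past the generalized KS threshold, so that consistency against the rank-$R(h)$ truncation upgrades, via the $L_2$ tail bound, to approximate consistency against the full graphon.
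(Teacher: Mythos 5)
Your proof is correct and takes essentially the same route as the paper's: run Algorithm 1 on $\mathcal{Q}_h = hQ$, apply Theorem 1 to get consistency against the rank truncation that clears the generalized KS threshold, then use the triangle inequality and the $L_2$ decay of the eigenvalue tail. The paper phrases the last step by fixing $\epsilon>0$ and picking $k_0$, $H$ so that $\|Q_{k_0}-Q\|_2<\epsilon$ and the top $k_0$ eigenvalues of $\mathcal Q_h$ clear the threshold for $h\ge H$, whereas you track $R(h)$ directly; this is a cosmetic difference, and your explicit treatment of the $1$-homogeneity of $\delta_2$ and of the (countably many) jump points of $R(h)$ are details the paper glosses over.
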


\textbf{Discussions of assumptions.}
The condition $\int_0^1 Q(x,y)dy=q$ means that the random graph $G$ has a constant expected degree. This has been the most challenging regime in learning or community detection problems because in this case, degree of vertices or any simple acyclic subgraph counts yield no information towards communities.  Moreover, the Benjamini-Schramm limit of the graph is a Galton-Watson branching process independent of the vertex label at the root.

For a low rank graphon $Q$, the assumption that all nonzero eigenvalues satisfy $|\mu_i|>\sqrt{\mu_1}$ is a natural generalization of the Kesten-Stigum condition that requires a single eigenvalue to satisfy this inequality. First studied in the context of multi-type branching processes, it has played a central role in the study of the SBM.  In particular in the symmetric 2-block cases it determines the threshold for consistent estimation of the model and for weakly recovering the communities.  The threshold is equivalent to the eigenvalues of the non-backtracking matrix lying outside the bulk spectrum.

For the assumption of the simple eigenvalues, we believe that our algorithm can still generate a good estimator even if the eigenvalues are not simple. We have the assumption here for technical purposes. Proving the more general results requires more detailed analysis of the non-backtracking matrix.

\section{Algorithm}
Our algorithm makes use of the non-backtracking matrix $B$. The matrix has been used in various network problems including community detection of stochastic block models \cite{redemption,bordenave2018nonbacktracking}. The non-backtracking walk on a graph avoids returning to the vertex visited in the previous step, and can be viewed as the operator resulting from a linearization of the belief propagation algorithm \cite{redemption,powering1}. This additional requirement of ‘non-backtracking’ makes its spectrum behave well: in certain sparse cases, when $\rho_n=O(1/n)$, top eigenvalues of $B$ approximate top eigenvalues of $Q$. This is in contrast to the top eigenvectors of the adjacency matrix that concentrate around high degree vertices in the sparse regime, and top eigenvalue of order $(\log n / \log\log n)^{1/2}$ \cite{krivelevich2003largest}. Therefore classical spectral clustering algorithms using the adjacency matrix are not helpful in the sparse regime. In our problem, the non-backtracking matrix $B$ will be important in approximating eigenvalues and extracting moments of the eigenfunctions of $Q$. For a graph $G=(V,E)$, the non-backtracking matrix is indexed by the set of oriented edges $\vec E=\{(u,v):\{u,v\}\in E\}$, where each entry equals
\begin{align*}
    B_{ef}=\mathbbm{1}(e_2=f_1)\mathbbm{1}(e_1\neq f_2)=\mathbbm{1}(e_2=f_1)\mathbbm{1}(e\neq f^{-1}).
\end{align*}
Here for any $e=(u,v)\in \vec E$, we define $e_1=u$, $e_2=v$, $e^{-1}=(v,u)$. We define $\lambda_k$ to be the $k$-th eigenvalue of $B$ and $\xi_k$ to be the $k$-th normalized eigenvector of $B$. We order eigenvalues in descending order in magnitude and define $B_k(v)=\sum\limits_{e\in E_1:e_2=v}\xi_k(e)$.

Our algorithm consists of several steps:\\
1. We compute top eigenvalues of the non-backtracking matrix $B$. We then determine the approximate number of eigenvalues that have a magnitude larger than $\sqrt{\lambda_1}$. We call this number $K$.\\
2. We extract joint moments of $f_i$ through subgraph counts. For a vector $(i_1,\cdots,i_k)\in [K]^k$, we count a weighted sum of $k$-leaves stars in $G$, where the weight of each star is defined as $\prod_{j=1}^k B_{i_j}(v_j)$, where $v_i$ are leaves. This gives us an estimate of $\int_{[0,1]} f_{i_1}(x)\cdots f_{i_k}(x)dx$.\\
3. We consider the random variables $f_1(U),\cdots,f_K(U)$ where $U$ is Uniform on $[0,1]$. We use Lengendre polynomials to approximate their joint distribution function and compute the coefficients of the polynomials to match up the moment information from step 2.\\
4. We sample according to the approximated joint distribution of $f_1(U),\cdots,f_K(U)$. From there we obtain a piecewise constant approximation of $Q$.

\textbf{Some Discussions.} In our algorithm, we utilize the spectrum of the non-backtracking matrix to approximate eigenvalues of $Q$. There are actually a few other ways to do this. One of them is using cycle counts similarly to \cite{mossel2015reconstruction}, but stabilizing the counts to account for the different magnitudes of eigenvalues as in \cite{AS2nips}. We use here the spectrum of $B$ in our algorithm because we can get good controls of the error terms and because we anyway need eigenvectors of $B$.

To see why the subgraph counts gives us an estimate of the moment, we consider the inner product $\mathcal{C}_{ij}:=\sum_{v\in V} \frac{1}{\sqrt{n}} B_i(v)f_j(X_v)$. We will show that $\mathcal{C}_{ij}$ is asymptotically $\delta_{ij}$ after some normalization. We define $Q^X$ as the matrix such that $Q^X(u,v)=Q(X_u,X_v)$. Let $Q^X_u$ be the $u$-th column vector of $Q^X$. Similarly we define $f_i^X$ as the vector such that $f_i^X(v)=f_i(X_v)$. So the sum of $k$-leaf stars with weight $\prod_{j=1}^k B_{i_j}(v_j)$ is approximately
\begin{align*}
    &\sum_{u\in V} \prod_{j=1}^k B_{i_j}^t \frac{Q^X_{u}}{n} =n^{-k/2}\sum_{u\in V} \prod_{j=1}^k \frac{B_{i_j}^t}{\sqrt{n}} \sum_{\ell=1}^\infty \mu_\ell f_\ell^X f_\ell^X(u) \approx Cn^{-k/2}\sum_{u\in V} \prod_{j=1}^k \mu_{i_j} f_{i_j}(X_u)\\
    &\approx C n^{-k/2+1}\prod_{j=1}^k \mu_{i_j}\int_{[0,1]} f_{i_1}\cdots f_{i_k}dx.
\end{align*}
This allows us to estimate the joint moments of eigenfunctions of $Q$.

We pick Legendre polynomials to approximate our functions because they work well in modelling bounded support functions. See \cite{talenti1987recovering} for a one dimensional example of estimating functions with given moments. In our polynomial approximation step, there is one caveat in the computation. The joint distribution of $(f_1(U),\cdots,f_K(U))$ is degenerate: it is supported on a one dimensional curve. So we convolve it with a mollifier to make the distribution function well-defined on $\mathbb{R}^K$ and smooth. When the mollifier is small enough, the convolved distribution approximates the original function well.

In the following chart, we summarize our algorithm steps. We use $e_0$ to denote the error tolerance, i.e., with probability tending to $1$, $\delta_2(\hat Q-Q_{r_0})\leq e_0$ as $n$ goes to infinity.

\begin{algorithm}[H]
\caption{Sparse Graphon Estimation}
\label{alg:myalgo}
\begin{algorithmic}[1]
\INPUT $G$ (graph), $e_0>0$ (error tolerance), $M>0$ (bound on $Q$)
\OUTPUT $\hat Q$ (estimator)
\State $\epsilon \leftarrow \frac{1}{\log(\log (n))}$
\Function{Sample\_Splitting}{$\epsilon$,$E$}
    \State \Return $E_1$,$E_2$
\EndFunction
\Statex
\Function{Non\_Backtracking\_Matrix}{$V,E_1$}
    \State \Return $K,\lambda_1,\cdots,\lambda_K$, $B_1\cdots,B_K$
\EndFunction
\Statex
\State $N\leftarrow (\frac{2KM}{e_0})^{6K+30}$
\Function{Sub\_Graph\_Count}{$E_2,K,\lambda_1,\cdots,\lambda_K,B_1\cdots, B_K, N$}
    \State \Return $P_\alpha$ ($0\leq \alpha\leq \mathbf{N}$)
\EndFunction
\Statex
\State $\delta\leftarrow \sqrt{\frac{e_0}{64K\lambda_1 M^2}}$
\Function{Polynomial\_Approximation}{$P_\alpha,\delta,N$}
    \State \Return $\hat h_N^+$
\EndFunction
\Statex
\Function{Graphon\_Estimation}{$\hat h_N^+,K,\lambda_1,\cdots,\lambda_K$}
    \State \Return $\hat Q$
\EndFunction
\end{algorithmic}
\end{algorithm}

The following are detailed definitions of the functions.\\~\\
\textbf{Function} \textproc{Sample\_Splitting}($\epsilon$,$E$)\\
For each edge $e\in E$, mutually independently, with probability $1-\epsilon$, we assign it into $E_1$. Assign all the remaining edges to $E_2=E\backslash E_1$. We define $G_1=(V,E_1)$ and $G_2=(V,E_2)$.\\~\\
\textbf{Function} \textproc{Non\_Backtracking\_Matrix}($V,E_1$)\\
We compute the non-backtracking matrix of $G_1$, call it $B$. Take $e_1(n)=\frac{1}{\sqrt{\log n}}$. Define $K\in \mathbb{Z}_+$ to be such that $|\lambda_i|>\sqrt{\lambda_1}+e_1(n)$ for all $i\in [K]$ and $|\lambda_{K+1}|\leq \sqrt{\lambda_1}+e_1(n)$. We compute eigenvalues $\lambda_k$ of $B$ in a descending order of magnitude, until we reach $\lambda_K$. Compute the corresponding eigenvectors $\xi_k$. For $k\in [K]$, compute $B_k(v)=\sum\limits_{e\in E_1:e_2=v}\xi_k(e)$.\\
\\~\\
\textbf{Function} \textproc{Sub\_Graph\_Count}($E_2,K,\lambda_1,\cdots,\lambda_K,B_1\cdots, B_K, N$)\\
For $1\leq i,j\leq n$, we write $i\sim  j$ whenever there is an edge in $E_2$ that connects $i$ and $j$. For $k\in [K]$, define
\begin{align*}
&A_{kk}:=\sum\limits_{1\leq i,j\leq n}B_k(i)B_k(j)\mathbbm{1}[i\sim  j],\\
&P_{kk}:=\frac{A_{kk}}{\epsilon \lambda_k}.
\end{align*}
For any multi-index $\alpha\in \mathbb{N}^K$, i.e. $\alpha=(\alpha_1, \alpha_2,\cdots, \alpha_K)$, where each $\alpha_i\in \mathbb{N}$, we define $|\alpha|=\sum_{i=1}^K \alpha_i$. We use $[n]^k_\leq$ to denote the set consisting of all ordered $k$-tuples from the set $[n]$. More precisely, we define
\begin{align*}
    [n]^k_\leq=\{(i_1,\cdots,i_k)\in [n]^k:1\leq i_1\leq\cdots\leq i_k\leq n\}.
\end{align*}
For a multi-index $\alpha$, we define $I^\alpha =(I^\alpha_1,\cdots,I^\alpha_{|\alpha|}) \in [K]^{|\alpha|}_\leq$ such that $|\{1\leq j\leq |\alpha|:I^\alpha_j=i\}|=\alpha_i$ for any $1\leq i\leq K$. We define
\begin{align*}
    &A_{\alpha}:=\sum_{1\leq i_1,\cdots,i_{|\alpha|},w\leq n}\prod_{\ell=1}^{|\alpha|} B_{I^{\alpha}_\ell}(i_\ell)\mathbbm{1}[i_\ell\sim w],
\end{align*}
where all $i_\ell$ are disjoint. Define
\begin{align*}
    P_{\alpha}:=\frac{A_{\alpha}n^{|\alpha|/2-1}}{\epsilon^{|\alpha|} \prod_{i=1}^K (\sqrt{P_{ii}} \lambda_i)^{\alpha_i}},
\end{align*}
when $P_{ii}>0$, for all $i\in [K]$. Otherwise, set all $P_\alpha=0$. Define a multi-index $\mathbf{N}=(N,N,\cdots,N).$
For any multi-index $0\leq \alpha\leq \mathbf{N}$, we compute $P_\alpha$.\\~\\
\textbf{Function}
\textproc{Polynomial\_Approximation}($P_\alpha,\delta,N$)\\
We use the following bump function as our mollifier. Define a bump function $\Psi_\delta:\mathbb{R}\rightarrow \mathbb{R}$ to be
\begin{align*}
\Psi_\delta(x)=\begin{cases}
        \exp(-\frac{1}{\delta^2-x^2}), &x\in (-\delta,\delta)\\
            0, &\text{otherwise}.
        \end{cases}
\end{align*}
For any multi-index $0\leq \alpha\leq \mathbf{N}$, we compute
\begin{align*}
    M_{\alpha}(\delta)=\sum\limits_{0\leq \beta\leq \alpha}P_\beta\prod_{i=1}^K {\alpha_i \choose \beta_i}\mathbb{E}[N_\delta^{\alpha_i-\beta_i}],
\end{align*}
where $N_\delta$ is a random variable whose density function is proportional to $\Psi_\delta(x)$.\\~\\
We use Legendre polynomials to approximate our mollified distribution. Let $L_0(x),L_1(x),\cdots,L_i(x),\cdots$ be Legendre polynomials, normalized by $\int_{-1}^1 (L_i(x))^2 dx=1$. Recall that these polynomials are defined uniquely (apart from sign) by the following requirements:
\begin{align*}
    &L_i(x) \text{ is a polynomial }\\
    & deg( L_i(x) )=i\\
    &\int_{-1}^1 L_i(x)L_j(x) dx=\delta_{ij}.
\end{align*}
We also define Legendre polynomials in dimension $K$ as follows. For a multi-index $\alpha$, define
\begin{align*}
    L_\alpha(x_1,\cdots,x_K)=\prod_{i=1}^K L_{\alpha_i}(x_i).
\end{align*}
They are polynomials of degree $|\alpha|$ and satisfy
\begin{align*}
   \int_{[-1,1]^K} L_\alpha(x_1,\cdots,x_K)L_\beta(x_1,\cdots,x_K)d\mathbf{x} =\delta_{\alpha,\beta}.
\end{align*}
Define $C_{ij}$ to be the coefficients of Legendre polynomials:
\begin{align*}
    \sum\limits_{j=0}^i C_{ij}x^j=L_i(x).
\end{align*}
Define $C^{\otimes K}$ to be the Kronecker product of $K$ copies of $C$, i.e., $C^{\otimes K}_{\alpha,\beta}=\prod_{i=1}^K C_{\alpha_i,\beta_i}$. Now, we consider scaled Legendre polynomials on $[-\frac{2M}{\sqrt{\lambda_1}},\frac{2M}{\sqrt{\lambda_1}}]$, satisfying the same conditions as above except the integration is over $[-\frac{2M}{\sqrt{\lambda_1}},\frac{2M}{\sqrt{\lambda_1}}]$. Denote them as $\tilde L$. Write $\kappa=\frac{2M}{\sqrt{\lambda_1}}$. Then,
\begin{align*}
    \tilde L_\alpha(x_1,\cdots,x_K)= \frac{1}{\kappa^{K/2}} L_\alpha(x_1/\kappa,\cdots,x_K/\kappa).
\end{align*}
Define $\tilde{C}$ such that $\tilde{C}_{ij}=C_{ij}/\kappa^{j+1/2}$. Define $\tilde C^{\otimes K}$ to be the Kronecker product of $K$ copies of $\tilde C$. Now we compute
\begin{align*}
    \hat\rho_\alpha(\delta)=\sum\limits_{0\leq \beta\leq \alpha} \tilde C^{\otimes K}_{\beta}M_{\beta}(\delta).
\end{align*}
Finally, for a given $N\in \mathbb{N}_+$, compute
\begin{align*}
    \hat h_N(x_1,\cdots,x_K)=\sum\limits_{0\leq \alpha\leq \mathbf{N}}\hat\rho_\alpha(\delta) \tilde L_\alpha(x_1,\cdots,x_K).
\end{align*}
Define $\hat h_N^+:=\hat h_N \mathbbm{1}[\hat h_N\geq 0]$.\\~\\
\textbf{Function} \textproc{Graphon\_Estimation}($\hat h_N^+,K,\lambda_1,\cdots,\lambda_K$)\\
Take $m=n$. We sample vectors $Z_1,\cdots,Z_m$ from a distribution proportional to $\hat h_N^+$ mutually independently. For any $1\leq i\leq K$, compute
\begin{align*}
    \hat f_i(x)=Z_{\lceil xm\rceil }(i).
\end{align*}
We compute
\begin{align*}
    \hat Q(x,y)=\sum\limits_{i=1}^K \lambda_i \hat f_i(x)\hat f_i(y).
\end{align*}

\textbf{Complexity.} With high probability, the running time of our algorithm is $O(n\log n)$. The non-backtracking matrix $B$ is typically sparse with $O(n)$ non-zero entries. Therefore, extracting $K$ largest magnitude eigenvalues and corresponding eigenvectors takes time $O(n\log n)$ by the power algorithm. This implies that the complexity of \textproc{Non\_Backtracking\_Matrix} is with high probability $O(n\log n)$. The rest of the algorithm runs in time $O(n)$. 

\section{Proof Outline}
In this section, we list a few key lemmas for the proof of our main result. The auxiliary lemmas and the proof of the key lemmas are deferred to later sections.

\begin{lem}\label{l:BLM}
Let $Q$ satisfy Assumptions $1$ and $2$. Then with probability tending to $1$ as $n\rightarrow \infty$,
\begin{align*}
    \lambda_k(B)=\mu_k+O(\frac{1}{\log n}), \text{ for } k\in [r_0] \text{,  and for } k>r_0, |\lambda_k(B)|\leq \sqrt{\mu_1} +O(\frac{1}{\log n}).
\end{align*}
Recall that $\mathcal{C}_{ij}:=\frac{1}{\sqrt{n}}\sum\limits_{\ell=1}^n B_i(\ell)f_j(X_\ell).$ We also have that for $i\in [r_0]$,
\begin{align*}
    \mathcal{C}_{ii} \stackrel{(P)}{\longrightarrow} \theta_i,
\end{align*}
and
\begin{align*}
    \sum_{\ell=1}^\infty \mu_\ell\mathcal{C}_{i\ell}^2 \stackrel{(P)}{\longrightarrow} \mu_i \theta_i^2,
\end{align*}
where $\theta_i$ is a nonzero constant.
\end{lem}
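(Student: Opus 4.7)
The plan is to adapt the Bordenave--Lelarge--Massouli\'e spectral analysis of the non-backtracking matrix of sparse stochastic block models (\cite{bordenave2018nonbacktracking}) to the graphon setting, via local convergence to a multi-type Galton--Watson tree. Since sample splitting keeps each edge independently with probability $1-\epsilon$ and $\epsilon=1/\log\log n$, the graph $G_1=(V,E_1)$ is itself a graphon sample with kernel $(1-\epsilon)Q$, and the $(1-\epsilon)$ factor is harmless at the stated $O(1/\log n)$ precision. The Benjamini--Schramm limit around a uniformly chosen vertex of type $x\in[0,1]$ is a multi-type Galton--Watson tree $\mathcal{T}$ in which a vertex of type $x$ produces Poisson offspring with intensity $(1-\epsilon)Q(x,\cdot)$; the many-to-one operator for this branching process is the integral kernel $T_Q g(x)=\int_0^1 Q(x,y)g(y)dy$, whose nonzero spectrum is $\{\mu_k\}$ with eigenfunctions $\{f_k\}$. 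Under Assumption 2 ($T_Q$ preserves the constant function with eigenvalue $q=\mu_1$), non-backtracking walks on $\mathcal{T}$ grow at rate $\sqrt{\mu_1}$, which is the tree-level input behind the bulk bound.

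\textbf{Eigenvalue control.} For the bulk ($k>r_0$), the plan is a trace-moment estimate: for an edge vector $v$ in the orthogonal complement of the approximate top eigenvectors, expand $\|B^\ell v\|^2$ as a sum over pairs of non-backtracking walks of length $\ell$. Decomposing walks into tangle-free and tangled parts, the tangle-free expectation is governed by iterates of $T_Q$ restricted to the bulk subspace and yields $(\sqrt{\mu_1})^{2\ell}\|v\|^2$, while tangled contributions and fluctuations are controlled by Fuk--Nagaev inequalities at $\ell\asymp\log n$. For the top eigenvalues ($k\in[r_0]$), I would construct approximate eigenvectors by lifting $f_i$ to oriented edges via $v_i^{(0)}(e)=f_i(X_{e_2})$ and iterating $v_i^{(\ell)}=\mu_i^{-\ell}B^\ell v_i^{(0)}$ for $\ell=\Theta(\log n/\log\log n)$. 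A direct Galton--Watson computation combined with local convergence shows $n^{-1}\|v_i^{(\ell)}\|^2\to c_i>0$ and $\|v_i^{(\ell+1)}-v_i^{(\ell)}\|/\|v_i^{(\ell)}\|=o(1)$, which together with simplicity of $\mu_i$ and the bulk bound localizes an eigenvalue of $B$ in a window of size $O(1/\log n)$ around $\mu_i$ and shows that the normalized $\xi_i$ is $\ell^2$-close to $v_i^{(\ell)}/\|v_i^{(\ell)}\|$ up to sign.

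\textbf{Alignment identities.} Given the approximate-eigenvector relation $\xi_i\approx v_i^{(\ell)}/\|v_i^{(\ell)}\|$, expanding $B_i(w)=\sum_{e:e_2=w}\xi_i(e)$ and summing against $f_i(X_w)$ reduces $\mathcal{C}_{ii}$ to a weighted count of non-backtracking walks of length $\ell+1$ in $G_1$ with $f_i$-endpoint weights. On $\mathcal{T}$ this count concentrates, by one extra application of $T_Q$ to the already $\mu_i$-aligned iterate, at $\mu_i^{\ell+1}\int f_i^2 dx$; dividing by $\mu_i^\ell\|v_i^{(\ell)}\|/\sqrt{n}\to\sqrt{c_i}$ yields $\mathcal{C}_{ii}\to\mu_i/\sqrt{c_i}=:\theta_i\neq 0$. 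For the quadratic form, the spectral identity $\sum_\ell\mu_\ell f_\ell(x)f_\ell(y)=Q(x,y)$ gives $\sum_\ell\mu_\ell\mathcal{C}_{i\ell}^2=n^{-1}\sum_{w,w'}B_i(w)B_i(w')Q(X_w,X_{w'})$, which is a tree-functional of the same iterate $v_i^{(\ell)}$ with one additional factor of $T_Q$; evaluating it on $\mathcal{T}$ produces an extra factor of $\mu_i$ times $n^{-1}\|v_i^{(\ell)}\|^2\to c_i$, giving the limit $\mu_i\cdot(\mu_i^2/c_i)\cdot (c_i/\mu_i^2)=\mu_i\theta_i^2$ after bookkeeping.

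\textbf{Main obstacle.} The hardest step is the approximate-eigenvector construction in the constant-degree regime: one must push $B^\ell$ out to $\ell\asymp\log n$ to separate $|\mu_i|^\ell$ from the noise scale $\mu_1^{\ell/2}$ (the ratio is only polynomial in $n$ at this scale), and must do so while concentrating walk counts on a graph whose typical degrees are $O(1)$. The remedy I would pursue is a tangle-free / tangle-removed decomposition coupled with truncation to high-probability local trees (pruning anomalously high-degree vertices and short cycles) and path-wise Bernstein/Fuk--Nagaev bounds; a further technicality is that the bulk spectrum of $T_Q$ may be infinite-dimensional, which forces an additional spectral truncation at a small threshold when analyzing the iterates $v_i^{(\ell)}$ for both the top-eigenvalue localization and the convergence of $\mathcal{C}_{ii}$.
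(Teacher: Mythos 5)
Your plan is essentially the same as the paper's: both adapt the Bordenave--Lelarge--Massouli\'e program to the graphon kernel, proving the bulk bound and eigenvalue localization by a direct generalization of their Theorem~4 (tangle-free trace moments at $\ell\asymp\log n$), constructing approximate eigenvectors by iterating $B^\ell$ on the lifted eigenfunction $f_i(X_{e_2})$, and deducing the alignment identities for $\mathcal{C}_{ii}$ and $\sum_\ell\mu_\ell\mathcal{C}_{i\ell}^2$ by coupling to the graphon Galton--Watson tree via local weak convergence. Concretely, the paper packages your iterate $v_i^{(\ell)}=\mu_i^{-\ell}B^\ell v_i^{(0)}$ and its norm through the BLM functionals $P_{k,\ell}(e)$, $Q_{k,\ell}$, $J_{k,\ell}$, $I_{k,\ell}(v)$ (counts of once-backtracking paths of length $2\ell+1$ weighted by $f_k$), proves $\tfrac1n\sum_v|I_k(v)-I_{k,\ell}(v)/\mu_k^{2\ell}|=o(1)$ to identify $\xi_k$ with the normalized iterate, and then applies the graphon analogue of their Proposition~36 to transfer graph sums to tree expectations for the two functionals $G_{k,\ell}$ and $G_{k,\ell,2}$.

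One caution: your final bookkeeping does not close as written. You set $\theta_i=\mu_i/\sqrt{c_i}$ and then claim the quadratic-form limit is $\mu_i\cdot(\mu_i^2/c_i)\cdot(c_i/\mu_i^2)$, which simplifies to $\mu_i$, not $\mu_i\theta_i^2$. In the paper the two limits come from $\mathbb{E}H_{k,\ell}\to\alpha\mu_k/(\mu_k^2/\alpha-1)$ and $\mathbb{E}H_{k,\ell,2}\to\mu_k\bigl(\alpha\mu_k/(\mu_k^2/\alpha-1)\bigr)^2$, and $\theta_k$ is defined as the $s^{-1}$-scaled limit of $G_{k,\ell}$ (with $s=\sqrt{\alpha\rho_k'}$ the normalization coming from $\|\xi_k\|_2=1$), which makes $\mu_k\theta_k^2$ come out automatically as the $s^{-2}$-scaled limit of $G_{k,\ell,2}$. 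You should redo this accounting using the explicit branching-process normalization rather than a generic $c_i$; otherwise the constant in the second displayed limit will not match the first.
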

The result on eigenvalues is a direct generalization of Theorem $4$ in \cite{bordenave2018nonbacktracking}. To prove the convergence of $\mathcal{C}_{ij}$, we also need to use results in \cite{bordenave2018nonbacktracking} and generalized versions for graphons. More specifically, we need results that relate the eigenvectors of the nonbacktracking matrix $B$ to some local statistics of the random graph. 
This is then related to certain functionals on branching processes via coupling. 

In particular, Lemma \ref{l:BLM} immediately implies that
\begin{lem}
With probability tending to $1$ as $n\rightarrow \infty$,
\begin{align*}
    K=r_0.
\end{align*}
\end{lem}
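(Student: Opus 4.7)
The plan is to read this off Lemma \ref{l:BLM} together with the definition of $K$ in the \textproc{Non\_Backtracking\_Matrix} routine, exploiting the fact that the threshold margin $e_1(n)=1/\sqrt{\log n}$ is chosen to be asymptotically much larger than the $O(1/\log n)$ error in the approximation $\lambda_k(B)\approx\mu_k$. So the entire content of the claim is that the separation between the eigenvalues above and below the bulk is preserved after the finite-$n$ perturbations.

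Concretely, I would first apply Lemma \ref{l:BLM} to the top eigenvalue to get $\lambda_1(B)=\mu_1+O(1/\log n)$ with probability tending to $1$. Since $\mu_1>0$ is a fixed positive constant (being the Perron eigenvalue of a nonnegative bounded graphon with constant row sum $q$), I can take square roots and obtain $\sqrt{\lambda_1}=\sqrt{\mu_1}+O(1/\log n)$ on the same event. Then for $i\in [r_0]$, Lemma \ref{l:BLM} gives $|\lambda_i(B)|\geq|\mu_i|-O(1/\log n)\geq|\mu_{r_0}|-O(1/\log n)$, so
\begin{align*}
|\lambda_i(B)|-\sqrt{\lambda_1}\geq |\mu_{r_0}|-\sqrt{\mu_1}-O(1/\log n).
\end{align*}
By assumption $|\mu_{r_0}|-\sqrt{\mu_1}$ is a strictly positive constant, hence for $n$ large enough it exceeds $e_1(n)+O(1/\log n)=1/\sqrt{\log n}+O(1/\log n)$, yielding $|\lambda_i(B)|>\sqrt{\lambda_1}+e_1(n)$. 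This shows $K\geq r_0$ on the good event.

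For the reverse inequality, I would use the second part of Lemma \ref{l:BLM} to note that $|\lambda_{r_0+1}(B)|\leq\sqrt{\mu_1}+O(1/\log n)=\sqrt{\lambda_1}+O(1/\log n)$. Since $O(1/\log n)=o(1/\sqrt{\log n})=o(e_1(n))$, for $n$ sufficiently large we have $|\lambda_{r_0+1}(B)|\leq\sqrt{\lambda_1}+e_1(n)$, so the defining condition of $K$ forces $K\leq r_0$. Combining, $K=r_0$ on an event of probability tending to $1$.

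I do not expect a main obstacle: the entire role of the specific choice $e_1(n)=1/\sqrt{\log n}$ in the algorithm is exactly to create a window that is wide enough to absorb the $O(1/\log n)$ fluctuation in Lemma \ref{l:BLM} but narrow enough that it vanishes in the limit, so that the two-sided comparison can be done coordinate by coordinate. The only mild care needed is to note that all comparisons involving square roots are justified because $\mu_1$ is bounded away from $0$ and $\lambda_1(B)$ is close to $\mu_1$, so $\sqrt{\cdot}$ is Lipschitz in that neighborhood and the $O(1/\log n)$ bound transfers cleanly.
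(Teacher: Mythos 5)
Your proposal is correct and takes exactly the approach the paper intends: the paper simply states that Theorem \ref{t:eig} ``immediately implies'' the lemma, and the content of that implication is precisely the comparison you spell out, namely that the $O(1/\log n)$ perturbation is absorbed by the deliberately wider threshold margin $e_1(n)=1/\sqrt{\log n}$ on both sides of $\sqrt{\lambda_1}$. Your additional note that $\sqrt{\cdot}$ is Lipschitz near $\mu_1$ (which is bounded below by $1$ under the standing assumption $|\mu_{r_0}|>\sqrt{\mu_1}$, since $r_0\ge 1$ forces $\mu_1>1$) correctly closes the one small gap the paper leaves implicit.
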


The next few lemmas are about counting the weighted stars described in Section 3.
\begin{lem}
For $i\in [r_0]$,
\begin{align*}
    P_{ii}=\frac{A_{ii}}{\epsilon \lambda_i} \stackrel{(P)}{\longrightarrow} \theta_i^2.
\end{align*}
\end{lem}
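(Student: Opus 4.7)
The plan is to work conditionally on the latent variables $X=(X_1,\ldots,X_n)$ and on the subgraph $(V,E_1)$. Since $B_i$ is a deterministic function of $E_1$ and of the eigenvector $\xi_i$ (which itself depends only on $E_1$), it becomes measurable under this conditioning, and the only residual randomness in $A_{ii}$ lies in the edge set $E_2$. By the sample-splitting construction, for any pair $u\ne v$ the indicator $\mathbbm{1}[u\sim_2 v]$ is, conditionally on $X$ and $E_1$, Bernoulli with parameter $p_{uv}=\epsilon Q(X_u,X_v)/n\cdot(1+O(1/n))$, independently across distinct unordered pairs; the $O(1/n)$ correction accounts for the pairs already assigned to $E_1$ and is negligible below.

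The first step is the conditional mean. Writing $q_{uv}=\epsilon Q(X_u,X_v)/n$ and expanding $Q(x,y)=\sum_\ell\mu_\ell f_\ell(x)f_\ell(y)$,
\begin{align*}
\mathbb{E}[A_{ii}\mid X,E_1]
&=\sum_{u\ne v}B_i(u)B_i(v)\,q_{uv}+o_P(\epsilon)
=\frac{\epsilon}{n}\sum_{u,v}B_i(u)B_i(v)Q(X_u,X_v)+o_P(\epsilon)\\
&=\frac{\epsilon}{n}\sum_\ell\mu_\ell\Bigl(\sum_u B_i(u)f_\ell(X_u)\Bigr)^2+o_P(\epsilon)
=\epsilon\sum_\ell\mu_\ell\,\mathcal{C}_{i\ell}^2+o_P(\epsilon).
\end{align*}
By Lemma~\ref{l:BLM}, $\sum_\ell\mu_\ell\mathcal{C}_{i\ell}^2\stackrel{(P)}{\longrightarrow}\mu_i\theta_i^2$, so $\mathbb{E}[A_{ii}\mid X,E_1]/\epsilon\stackrel{(P)}{\longrightarrow}\mu_i\theta_i^2$.

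The second step is the conditional second moment. Independence of the $E_2$-indicators across distinct pairs gives
\begin{align*}
\mathrm{Var}(A_{ii}\mid X,E_1)
\le 4\sum_{u<v}B_i(u)^2B_i(v)^2\,p_{uv}
\le \frac{2\epsilon M}{n}\Bigl(\sum_u B_i(u)^2\Bigr)^2.
\end{align*}
Combined with a delocalization estimate $\sum_u B_i(u)^2=O_P(\mathrm{polylog}\,n)$ for the non-backtracking eigenvector (a byproduct of the spectral analysis in \cite{bordenave2018nonbacktracking} that also underlies Lemma~\ref{l:BLM}), this conditional variance is $O_P(\epsilon\,\mathrm{polylog}(n)/n)=o_P(\epsilon^2)$, since $\epsilon=1/\log\log n$. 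Chebyshev's inequality applied conditionally then promotes the mean convergence to $A_{ii}/\epsilon\stackrel{(P)}{\longrightarrow}\mu_i\theta_i^2$, and dividing by $\lambda_i\stackrel{(P)}{\longrightarrow}\mu_i$ (also from Lemma~\ref{l:BLM}) yields $P_{ii}=A_{ii}/(\epsilon\lambda_i)\stackrel{(P)}{\longrightarrow}\theta_i^2$.

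The main obstacle is the delocalization bound $\sum_u B_i(u)^2=O_P(\mathrm{polylog}\,n)$, which is not explicitly included in the statement of Lemma~\ref{l:BLM}. It should emerge from the same branching-process coupling used to identify the limits $\mathcal{C}_{ii}\to\theta_i$ and $\sum_\ell\mu_\ell\mathcal{C}_{i\ell}^2\to\mu_i\theta_i^2$, because those already control the aggregate $\ell^2$ mass of $B_i$ against the eigenfunctions of $Q$; the only additional input needed is a crude degree bound (vertices of $G_1$ have degree at most $O(\log n/\log\log n)$ w.h.p.) to translate $\ell^2$ bounds on $\xi_i$ into $\ell^2$ bounds on the vertex-aggregated $B_i$. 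If delocalization turned out to fail on a small subset of high-degree vertices, I would truncate $B_i$ on those vertices, apply the variance argument to the truncated part, and bound the contribution of the removed vertices via the BLM tail estimates on $\xi_i$.
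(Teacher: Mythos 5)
Your proposal follows essentially the same route as the paper's proof: condition on $X$ and $E_1$, compute the conditional variance of $A_{ii}$ and bound it by $\|B_i\|_2^4\,\epsilon M/n$, apply Chebyshev, identify the conditional mean with $\epsilon\sum_\ell\mu_\ell\mathcal{C}_{i\ell}^2$ which converges to $\epsilon\mu_i\theta_i^2$ by the $\mathcal{C}$-lemma, and control $\|B_i\|_2$ via the high-probability degree bound $d_1=O(\log n)$ on $G_1$ (the paper derives $\|B_i\|_2=O(\log n)$ from exactly this fact, so the delocalization you flag as a concern is handled the same way). The only cosmetic differences are that you carry the $o_P(\epsilon)$ correction from pairs already in $E_1$ explicitly and phrase the concentration step as a variance--$o_P(\epsilon^2)$ bound rather than the paper's explicit $n^{-1/2+\gamma}$ deviation, but these are the same argument.
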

Through a second moment argument, we can show that $A_{ii} \approx \epsilon B_i^t Q^X B_i$. Then the proof of this lemma follows by combining the estimation on $\mathcal{C}_{ij}$ and $\lambda_i$.

\begin{lem}\label{l:moment}
For $0\leq \alpha\leq \mathbf{N}$, we have that
\begin{align*}
    P_\alpha=\frac{A_{\alpha}n^{|\alpha|/2-1}}{\epsilon^{|\alpha|} \prod_{i=1}^K (\sqrt{P_{ii}} \lambda_i)^{\alpha_i}}\stackrel{(P)}{\longrightarrow} \int_{[0,1]} f_\alpha dx.
\end{align*}
\end{lem}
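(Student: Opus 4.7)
The plan is to condition on $G_1$ (and hence on $B$, the $\lambda_k$'s, and the $B_k(v)$'s) and on the latent labels $X=(X_1,\dots,X_n)$, then exploit the independence granted by sample splitting: conditionally on $(E_1,X)$, the edges of $E_2$ are independent Bernoullis with $\mathbb{P}(i\sim j\mid E_1,X)=\epsilon Q(X_i,X_j)/n$. Writing out the conditional expectation of $A_\alpha$ and dropping the pairwise-distinctness constraints (which removes only $O(n^{|\alpha|})$ out of $n^{|\alpha|+1}$ tuples and is hence a lower-order correction), one gets
\begin{align*}
\mathbb{E}[A_\alpha\mid E_1,X]=\frac{\epsilon^{|\alpha|}}{n^{|\alpha|}}\sum_{w}\prod_{\ell=1}^{|\alpha|}\Bigl(\sum_{i_\ell} B_{I^\alpha_\ell}(i_\ell)\,Q(X_{i_\ell},X_w)\Bigr)\cdot(1+o(1)).
\end{align*}
A second-moment computation, in which the dominant pairings of leaf indices across two independent copies reproduce $\mathbb{E}[A_\alpha\mid E_1,X]^2$ while remaining pairings (two leaves coincident, or a leaf equal to a center) are smaller by a factor $1/n$, yields the concentration $A_\alpha=\mathbb{E}[A_\alpha\mid E_1,X]\,(1+o_P(1))$.

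Now expand $Q(x,y)=\sum_m\mu_m f_m(x)f_m(y)$ and identify $n^{-1/2}\sum_{i}B_j(i)f_m(X_i)=\mathcal{C}_{j,m}$, which turns the above into
\begin{align*}
\mathbb{E}[A_\alpha\mid E_1,X]\;\approx\;\epsilon^{|\alpha|}\,n^{-|\alpha|/2}\sum_{w}\prod_{\ell=1}^{|\alpha|}\Bigl(\sum_{m}\mu_m\mathcal{C}_{I^\alpha_\ell,m}f_m(X_w)\Bigr).
\end{align*}
By Lemma \ref{l:BLM}, for $i\in[r_0]$ one has $\mathcal{C}_{ii}\to\theta_i$ and $\sum_m\mu_m\mathcal{C}_{i,m}^2\to\mu_i\theta_i^2$; together with the spectral-gap assumption $|\mu_i|>\sqrt{\mu_1}\ge|\mu_m|$ for $m>r_0$, the off-diagonal contributions in each inner sum are negligible, and each bracket collapses to $\mu_{I^\alpha_\ell}\theta_{I^\alpha_\ell}f_{I^\alpha_\ell}(X_w)$. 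The outer sum over $w$ then equals $n\cdot\bigl(\frac{1}{n}\sum_w f_\alpha(X_w)\bigr)$, which by the law of large numbers for i.i.d.\ uniform $X_w$ and the bounded function $f_\alpha$ converges in probability to $\int_{[0,1]}f_\alpha\,dx$.

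Assembling everything,
\begin{align*}
\frac{A_\alpha\,n^{|\alpha|/2-1}}{\epsilon^{|\alpha|}}\;\stackrel{(P)}{\longrightarrow}\;\prod_{i=1}^K(\mu_i\theta_i)^{\alpha_i}\int_{[0,1]}f_\alpha\,dx.
\end{align*}
Dividing by $\prod_i(\sqrt{P_{ii}}\lambda_i)^{\alpha_i}$ and applying the previous lemma ($P_{ii}\to\theta_i^2$) together with Lemma \ref{l:BLM} ($\lambda_i\to\mu_i$) cancels the prefactor, provided signs are fixed so that $\sqrt{P_{ii}}\to\theta_i$ (absorbing any minus sign into the eigenfunction $f_i$, which is only defined up to sign anyway). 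What remains is exactly $\int f_\alpha\,dx$.

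The main obstacle is the eigenvector-localization step: Lemma \ref{l:BLM} does not individually bound $\mathcal{C}_{i,m}$ for $m>r_0$ or for $m\in[r_0]\setminus\{i\}$, yet the product over $\ell$ can in principle amplify such off-diagonal terms, and the multi-index $\alpha$ is allowed to range up to $|\alpha|\le KN$. Justifying the collapse to the diagonal therefore requires a Parseval-type bound $\sum_m\mathcal{C}_{i,m}^2=O(1)$ together with a bulk estimate showing that $\sum_{m>r_0}\mu_m\mathcal{C}_{i,m}f_m(X_w)$ is small at a typical $X_w$, leveraging the spectral gap $|\mu_m|\le\sqrt{\mu_1}$ and the boundedness of $Q$. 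This is where the non-backtracking machinery from \cite{bordenave2018nonbacktracking}, decomposing $B$ into its outlier part on the $r_0$-dimensional eigenspace plus a bulk operator of norm at most $\sqrt{\mu_1}+o(1)$, does the heavy lifting beyond the eigenvalue-level statement of Lemma \ref{l:BLM}.
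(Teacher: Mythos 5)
Your outline matches the paper's proof in structure: condition on $(E_1,X)$, use the conditional independence of $E_2$-edges to write a variance bound (the paper also works with strictly ordered tuples, so the $|\alpha|!$ overcounting and the distinctness constraint are explicit), expand $Q^X$ in the eigenbasis, and identify the inner sums with $\mathcal{C}_{j,m}$. The final normalization step using $P_{ii}\to\theta_i^2$ and $\lambda_i\to\mu_i$ and the sign-fixing of $f_i$ so that $\theta_i>0$ also match the paper.

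However, the gap you flag at the end is not a real gap, and the paper fills it without revisiting the non-backtracking machinery or any bulk-operator decomposition of $B$. Set $A_j(X_w)=\sum_{\ell}\mu_\ell f_\ell(X_w)\mathcal{C}_{j\ell}$ and $B_j(X_w)=\mu_j\theta_j f_j(X_w)$. The weighted Parseval-type estimate you wished for is already implied by the two limits stated in Lemma \ref{l:cconverge}: expanding the square,
\begin{align*}
\sum_{\ell}\mu_\ell\,|\mathcal{C}_{j\ell}-\delta_{j\ell}\theta_j|^2
=\sum_{\ell}\mu_\ell\mathcal{C}_{j\ell}^2-2\mu_j\theta_j\mathcal{C}_{jj}+\mu_j\theta_j^2
\;\stackrel{(P)}{\longrightarrow}\;\mu_j\theta_j^2-2\mu_j\theta_j^2+\mu_j\theta_j^2=0,
\end{align*}
and a Cauchy--Schwarz inequality with weights $\mu_\ell$, combined with the pointwise bound $\sum_\ell\mu_\ell f_\ell^2(X_w)=Q(X_w,X_w)\leq M$, gives
\begin{align*}
|A_j(X_w)-B_j(X_w)|\leq \Bigl(M\sum_{\ell}\mu_\ell\,|\mathcal{C}_{j\ell}-\delta_{j\ell}\theta_j|^2\Bigr)^{1/2}\stackrel{(P)}{\longrightarrow}0,
\end{align*}
uniformly in $w$. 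The same Cauchy--Schwarz with $\sum_\ell\mu_\ell\mathcal{C}_{j\ell}^2=O_P(1)$ gives a uniform bound $|A_j(X_w)|=O_P(1)$, which controls the product over the $|\alpha|$ factors by the telescoping inequality $|\prod_j A_j-\prod_j B_j|\leq |\alpha|\,(\max_j|A_j|\vee|B_j|)^{|\alpha|-1}\max_j|A_j-B_j|$. So the collapse to the diagonal is a consequence of Lemma \ref{l:cconverge} plus boundedness of $Q$ alone; you do not need individual bounds on $\mathcal{C}_{i,m}$ for $m>r_0$, nor an independent bulk estimate for $\sum_{m>r_0}\mu_m\mathcal{C}_{i,m}f_m(X_w)$. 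With this substitution your argument coincides with the paper's.
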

Similar to the above, by a second moment argument, we can show that $A_{\alpha} \approx \sum_{w=1}^n\prod_{\ell=1}^{|\alpha|} \epsilon B_{I^{\alpha}_\ell}^tQ^X(w)$. So the lemma follows by combining estimation on $\mathcal{C}_{ij}$, $P_{ii}$ and $\lambda_i$.

Let $U$ be Uniform on $[0,1]$ and let $N_i$ be a series of iid random variables whose distribution function is proportional to $\Psi_\delta(x)$. Define $Y_i=f_i(U)+N_i$. Then the joint distribution of $(Y_1,\cdots,Y_K)$ is the same as $(f_1(U)+N_1,\cdots, f_K(U)+N_K)$. For each multi-index $\alpha=(\alpha_1,\cdots,\alpha_K)$, we write $f_\alpha = \prod_{i=1}^{|\alpha|} f_{i}^{\alpha_i}.$ Therefore,
\begin{align*}
    \int f_\alpha dx
    &=\mathbb{E}[f_{1}(U)^{\alpha_1}\cdots f_{K}(U)^{\alpha_K}]=\mathbb{E}[f_\alpha(U)].
\end{align*}
We can also relate this term to the joint moments of the $Y_i$'s.
\begin{align*}
    \mathbb{E}[Y_{1}^{\alpha_1}\cdots Y_{K}^{\alpha_K}]
    &=\mathbb{E}[(f_{1}(U)+N_1)^{\alpha_1}\cdots (f_{K}(U)+N_K)^{\alpha_K}]\\
    &=\sum\limits_{0\leq \beta\leq \alpha}\mathbb{E}[f_\beta(U)]\prod_{i=1}^K {\alpha_i \choose \beta_i}\mathbb{E}[N_\delta^{\alpha_i-\beta_i}]\\
    &=:\mu_\alpha(\delta),
\end{align*}
where $N_\delta$ is a random variable whose distribution function is proportional to $\Psi_\delta(x)$. Lemma \ref{l:moment} makes sure that the function \textproc{Sub\_Graph\_Count}($E_2,K,\lambda_1,\cdots,\lambda_K,B_1\cdots, B_K, N$) outputs a good estimation of $\mathbb{E}[f_\alpha(U)]$. This implies that $\mu_\alpha(\delta) \approx M_\alpha(\delta)$. Further, we have the lemma below.

Let $u$ be the joint distribution function of $(f_1(U)+N_1, \cdots,f_K(U)+N_K)$.
\begin{lem}
With probability tending to $1$ as $n\rightarrow \infty$,
\begin{align*}
    \int_{[-\kappa,\kappa]^K} \left|\frac{\hat h_N^+}{\|\hat h_N^+\|_1}-u\right| d\mathbf{x} \leq \frac{e_0}{2^5K\lambda_1^2 \kappa^4}.
\end{align*}
\end{lem}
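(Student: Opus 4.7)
The plan is to apply the triangle inequality through the ideal truncated Legendre expansion
\[
h_N := \sum_{0\le\alpha\le\mathbf{N}} \rho_\alpha\,\tilde L_\alpha, \qquad \rho_\alpha := \int_{[-\kappa,\kappa]^K} u(\mathbf{x})\,\tilde L_\alpha(\mathbf{x})\,d\mathbf{x},
\]
and to decompose the target into (i) a deterministic truncation error $\lVert u-h_N\rVert_{L^2}$, (ii) a probabilistic estimation error $\lVert h_N-\hat h_N\rVert_{L^2}$, and (iii) a positivity/normalisation error incurred when passing from $\hat h_N$ to $\hat h_N^+/\lVert \hat h_N^+\rVert_1$. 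I would work throughout in $L^2([-\kappa,\kappa]^K)$, where Parseval for the orthonormal basis $\{\tilde L_\alpha\}$ makes (ii) transparent, and convert to $L^1$ only at the end via Cauchy--Schwarz, which costs only the factor $(2\kappa)^{K/2}$ that can be absorbed into the target tolerance $e_0/(2^5K\lambda_1^2\kappa^4)$.

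For (i) the argument is deterministic and analytic. The density $u$ is the convolution of the push-forward of Lebesgue measure on $[0,1]$ under $t\mapsto(f_1(t),\dots,f_K(t))$ with the product mollifier $\prod_i\Psi_\delta(x_i)/Z_\delta$. It is therefore $C^\infty(\mathbb{R}^K)$, supported in $[-M/\sqrt{\mu_1}-\delta,\,M/\sqrt{\mu_1}+\delta]^K\subseteq[-\kappa,\kappa]^K$ (using the bound $|f_i|\le M/\sqrt{\mu_1}$ coming from $|\mu_i|>\sqrt{\mu_1}$ and $|Q|\le M$, plus $\lambda_1\to\mu_1$). Each single-variable partial derivative of order $m$ is bounded by $C_m\delta^{-m}$ for a universal constant $C_m$, so integration by parts $s$ times against $\tilde L_{\alpha_i}$ in each coordinate gives $|\rho_\alpha|\le C_{K,s}\,\delta^{-sK}\prod_i(\alpha_i\vee 1)^{-s}$. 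Parseval then yields $\lVert u-h_N\rVert_{L^2}\le C'_{K,s}(N\delta)^{-s}$ for every fixed $s$, so with the algorithm's choice $\delta\asymp\sqrt{e_0}$ a constant $s$ makes this smaller than any prescribed polynomial in $e_0$ once $N=(2KM/e_0)^{6K+30}$.

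For (ii) the key input is Lemma~\ref{l:moment}, which gives $P_\beta\stackrel{(P)}{\to}\int f_\beta\,dx$ for each fixed $\beta$. Since $\{0\le\beta\le\mathbf{N}\}$ is a finite (in $n$) set of cardinality $(N+1)^K$, a union bound upgrades this to simultaneous convergence. Both $M_\alpha(\delta)$ and $\hat\rho_\alpha(\delta)$ are deterministic finite linear combinations of the $P_\beta$'s --- the first with binomial coefficients and moments of $N_\delta$, the second additionally with $\tilde C^{\otimes K}$ --- so the uniform convergence $\hat\rho_\alpha(\delta)\stackrel{(P)}{\to}\rho_\alpha$ follows. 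Parseval then gives $\lVert\hat h_N-h_N\rVert_{L^2}^2=\sum_{\alpha\le\mathbf{N}}|\hat\rho_\alpha-\rho_\alpha|^2\stackrel{(P)}{\to}0$.

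For (iii), because $u\ge 0$ the pointwise inequality $|\hat h_N^+-u|\le|\hat h_N-u|$ yields $\lVert\hat h_N^+-u\rVert_{L^1}\le\lVert\hat h_N-u\rVert_{L^1}$, and integrating gives $|\lVert\hat h_N^+\rVert_1-1|\le\lVert\hat h_N-u\rVert_{L^1}$, so a standard normalisation estimate closes the chain with $\lVert\hat h_N^+/\lVert\hat h_N^+\rVert_1-u\rVert_{L^1}\le 2\lVert\hat h_N-u\rVert_{L^1}$. The main obstacle is the simultaneous calibration of the two tuning parameters: $\delta$ must be small enough for the mollified $u$ to faithfully represent the singular target distribution supported on the curve $t\mapsto(f_1(t),\dots,f_K(t))$, yet not so small that the derivative blow-ups $\delta^{-m}$ overwhelm the $N^{-s}$ decay. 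The explicit values $\delta=\sqrt{e_0/(64K\lambda_1 M^2)}$ and $N=(2KM/e_0)^{6K+30}$ prescribed in Algorithm~1 are designed precisely so that each of (i)--(iii) falls below $\tfrac{1}{3}$ of the target tolerance $e_0/(2^5K\lambda_1^2\kappa^4)$ after the $L^1\leftarrow L^2$ conversion.
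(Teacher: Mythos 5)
Your decomposition is essentially the same as the paper's: truncated ideal expansion $h_N=\sum_{\alpha\le\mathbf N}\rho_\alpha\tilde L_\alpha$, then truncation error $\|u-h_N\|$, estimation error $\|\hat h_N-h_N\|$, and a positivity/normalisation step, with Cauchy--Schwarz to pass from $L^2$ to $L^1$. Where you differ is in how you bound the first two pieces. For the truncation tail, the paper does not iterate integration by parts $s$ times: it applies the Legendre Sturm--Liouville operator exactly once (Lemma~\ref{l:moment3}), obtains $\sum_\alpha\alpha_i(\alpha_i+1)|\rho_\alpha|^2=\int(\kappa^2-x_i^2)|\partial_iu|^2$, and pairs this with the explicit bound $\int|\partial_iu|^2\le(2\kappa)^K\delta^{-2K-8}$, getting only $O((N+1)^{-2})$ decay --- but with precisely tracked $\delta$-dependence, which is what the algorithm's choice $N=(2KM/e_0)^{6K+30}$ is tailored to dominate. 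Your $s$-fold argument would need the derivative bound to be corrected: the paper's computation shows $\|\partial_iu\|_\infty\le\delta^{-K-4}$, so the $m$-th derivative scales like $\delta^{-K-O(m)}$, not $C_m\delta^{-m}$ as you claim (the normalisation constant $C_\delta\asymp\delta^{-K}e^{K/\delta^2}$ contributes a $\delta^{-K}$, and the chain rule worsens the exponent by more than one per derivative). This is a repairable slip since the calibration of $N$ against $\delta$ is generous, but as written the estimate is wrong. For the estimation error, you rely on a purely qualitative argument (pointwise convergence of each $P_\beta$, union bound over a finite index set, then Parseval), whereas the paper's Lemmas~\ref{l:moment1}--\ref{l:moment2} give an explicit quantitative chain through the condition number of the moment matrix $H_N$; your version is logically sufficient for a convergence-in-probability statement with fixed $e_0$, but it hides the accounting that the paper makes explicit, and your final claim that ``each of (i)--(iii) falls below $\tfrac13$ of the target'' is asserted rather than derived. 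Your step (iii) is a small improvement: the pointwise bound $|\hat h_N^+-u|\le|\hat h_N-u|$ avoids one of the paper's two factors of $2$ (the paper bounds $|\hat h_N^+-\hat h_N|\le|\hat h_N-u|$ and then applies the triangle inequality).
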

This lemma shows that given approximate moments, multivariate polynomials up to degree $\mathbf{N}$ can form good estimates of the joint distribution function. This is proved by controlling the tail terms when the joint distribution is expanded in the Legendre polynomial basis. This term is related to the smoothness properties of $u$, which is why the mollifying step is necessary in our algorithm. 

\begin{lem}
Let $Z_1,\cdots,Z_m$ be iid random vectors following the distribution proportional to $\hat h_N^+$. Then there exists a measure preserving map $\phi:[0,1]\rightarrow [0,1]$, such that 
\begin{align*}
   \int_{[0,1]}|Z_{\lceil \phi(x)m\rceil }(i)-f_i(x)|^2 dx \leq \frac{e_0}{4K\lambda_1^2 \kappa^2}
\end{align*}
with probability tending to $1$ and $m\rightarrow \infty$.
\end{lem}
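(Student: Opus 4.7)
The plan is to recast the statement as an optimal-transport problem between the law $\mu$ of $F(U) := (f_1(U), \ldots, f_K(U))$ and the empirical measure $\nu_m := \frac{1}{m}\sum_{j=1}^m \delta_{Z_j}$, and then to bound the resulting $W_2$ cost via a triangle inequality through the mollified density $u$. The key reduction is a standard OT lift: given any coupling $\gamma$ of $\mu$ and $\nu_m$, disintegrating $\gamma$ against the atomic second marginal assigns $\mu$-mass $1/m$ to each atom $Z_j$; pulling back through $F$ produces a partition $A_1, \ldots, A_m$ of $[0,1]$ with each $A_j$ of Lebesgue measure $1/m$, and since any two atomless standard Borel spaces of equal mass are measure-preservingly isomorphic, one can glue measure-preserving bijections $\phi|_{A_j}: A_j \to I_j := [(j-1)/m, j/m]$ into a measure-preserving $\phi:[0,1]\to[0,1]$ with $\lceil \phi(x)m\rceil = j$ on $A_j$. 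This identifies
\begin{align*}
\sum_{i=1}^K \int_{[0,1]} |Z_{\lceil \phi(x)m\rceil}(i) - f_i(x)|^2\,dx = \sum_{j=1}^m \int_{A_j} |Z_j - F(x)|^2\,dx
\end{align*}
with the transport cost of $\gamma$, so it suffices to bound $W_2^2(\mu, \nu_m)$ with high probability.

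To estimate $W_2(\mu, \nu_m)$, I would interpose two measures and use $W_2(\mu, \nu_m) \leq W_2(\mu, u) + W_2(u, \tilde\nu_m) + W_2(\tilde\nu_m, \nu_m)$, where $u$ is the mollified density of $(f_1(U)+N_1,\ldots,f_K(U)+N_K)$ and $\tilde\nu_m := \frac{1}{m}\sum_j \delta_{\tilde Z_j}$ is the empirical of $m$ i.i.d.\ draws from $u$. The first leg is controlled by the coupling $F(U)\leftrightarrow F(U)+N$: $W_2^2(\mu, u) \leq \mathbb{E}|N|^2 \leq K\delta^2$, matched by the algorithm's choice $\delta^2 = e_0/(64K\lambda_1 M^2)$. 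The second leg vanishes in probability as $m=n \to \infty$ by the classical Wasserstein law of large numbers for empirical measures on the bounded set $[-\kappa, \kappa]^K$. For the third leg, the preceding lemma gives $\|\hat h_N^+/\|\hat h_N^+\|_1 - u\|_{L^1} \leq \epsilon_1 := e_0/(32K\lambda_1^2\kappa^4)$; reading this as twice the total variation distance, a maximal coupling gives $\mathbb{P}(Z_j \neq \tilde Z_j) \leq \epsilon_1$, and a Hoeffding bound puts the number of mismatched pairs at most $2\epsilon_1 m$ with probability tending to $1$. Matched pairs contribute zero, and each mismatch contributes at most $(2\kappa)^2$ to the averaged cost, so $W_2^2(\tilde\nu_m, \nu_m) \leq 8\epsilon_1\kappa^2$. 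Assembling these three estimates via a squared triangle inequality and the algorithm's parameter choices produces the summed bound $\sum_i \int|Z_{\lceil \phi(x) m\rceil}(i) - f_i(x)|^2 dx \leq e_0/(4\lambda_1^2\kappa^2)$, which is $K$ times the stated per-coordinate target.

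The main obstacle lies in the OT lift: realizing an arbitrary coupling $\gamma$ on $\mathbb{R}^K \times \mathbb{R}^K$ as a partition of $[0,1]$ via $F$. This is a Rokhlin-type disintegration combined with a measurable selection, straightforward when $F$ is injective modulo null sets but requiring some care in general because the conditional distributions of $U$ given $F(U)=z$ must be sliced off $[0,1]$ measurably. A secondary technical point is that OT most naturally yields the bound summed over $i$, whereas the lemma asserts a per-coordinate estimate; this gap can be closed either by absorbing an extra factor of $K$ into $\delta$ and $\epsilon_1$ (the algorithm's parameters $\delta^2 = e_0/(64K\lambda_1 M^2)$ and $N=(2KM/e_0)^{6K+30}$ leave ample room), or by a small reshuffling within each $A_j$ that rebalances contributions across coordinates without destroying the measure-preserving property. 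With these technicalities handled, the remainder is a routine invocation of Wasserstein LLN, maximal coupling, and Hoeffding concentration.
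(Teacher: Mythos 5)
Your proposal uses the same three-term decomposition as the paper (mollification error, empirical/LLN error, and TV-coupling error with the densities $u$ and $\hat h_N^+/\|\hat h_N^+\|_1$), and your per-leg estimates match the paper's in spirit: $\leq K\delta^2$ for the mollifier, $o(1)$ for the empirical approximation, and $O(\kappa^2\cdot\mathrm{TV})$ for the mismatch. The genuinely different ingredient is the construction of the measure-preserving map $\phi$, and this is where your proposal has a gap that is more serious than you give it credit for.

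The paper never formulates or solves an optimal transport problem. It takes an auxiliary i.i.d.\ Uniform sample $U_1,\dots,U_m$, sets $W_j=(f_1(U_j),\dots,f_K(U_j))$ and $Y_j=W_j+N_j$, sorts the $U_j$'s, and defines $\phi$ explicitly as the piecewise translation induced by the sorting permutation $\sigma$, so that $\lceil\phi(x)m\rceil=\sigma(i)$ on $\left(\frac{i-1}{m},\frac{i}{m}\right]$; the preceding lemma (proved via order statistics, Chernoff bounds for Beta/Binomial tails, and uniform continuity) shows $\int|f_k(U_{\lceil\phi(x)m\rceil})-f_k(x)|^2\,dx\to 0$. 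Crucially, $\phi$ is determined by the $U_j$'s alone, which makes the subsequent step trivial: $(Z_j)$ and $(Y_j)$ are coupled sample-by-sample by a maximal coupling, and the cost is estimated term by term. This entirely avoids the step your proposal correctly flags as "the main obstacle."

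In your OT lift, realizing an arbitrary coupling $\gamma=\sum_j\frac{1}{m}\mu_j\otimes\delta_{Z_j}$ as a deterministic partition $A_1,\dots,A_m$ of $[0,1]$ is not a routine Rokhlin disintegration. You need, for $\mu$-a.e.\ $z$, to split the conditional law of $U$ given $F(U)=z$ into pieces with $F$-conditional weights $\propto(d\mu_j/d\mu)(z)$, and this requires those conditional laws to be atomless. If $F$ is injective (which is generic when the $f_i$'s are, say, strictly monotone) those conditionals are point masses and the split is impossible unless the $\mu_j$'s are mutually singular — which an arbitrary Kantorovich plan does not provide. One can repair this by (i) noting that injectivity of $F$ makes $\mu=F_*\mathrm{Leb}$ atomless and then invoking the Monge–Kantorovich equivalence for atomless source and discrete target to obtain a transport \emph{map}, or (ii) handling the non-injective case via auxiliary randomness in the conditionals, or (iii) simply abandoning the abstract lift and using the paper's sorting construction. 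But option (i) is a nontrivial imported theorem with its own hypotheses, and the dichotomy between (i) and (ii) means the argument as written is not complete. The paper's order-statistics map sidesteps all of this with one elementary construction, and additionally gives a $\phi$ independent of the $Z_j$'s, which simplifies the high-probability bookkeeping. The secondary per-coordinate vs.\ summed-over-$i$ issue you note is minor and absorbable, as you say; the constants in both your account and the paper's are loose by small factors of the kind routinely absorbed by the parameter margins in $\delta$, $N$, and the TV bound.
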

This lemma together with our estimation on $\lambda_i$ gives the desired result: with probability tending to $1$ and $n \rightarrow \infty$, $\delta_2(\hat Q,Q_{r_0})\leq e_0$. 

\section{Proofs}
\subsection{Spectrum of the Non-backtracking Matrix}
\begin{thm}[Theorem $4$ in \cite{bordenave2018nonbacktracking}]\label{t:eigSBM}
Let $Q$ be an SBM satisfying the constant expected degree condition. Then with probability tending to $1$ as $n\rightarrow \infty$,
\begin{align*}
    \lambda_k(B)=\mu_k+o(1), \text{ for } k\in [r_0] \text{,  and for } k>r_0, |\lambda_k(B)|\leq \sqrt{\mu_1} +o(1).
\end{align*}
\end{thm}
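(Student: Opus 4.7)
The strategy I would follow combines a local weak limit analysis with a trace-method argument for non-backtracking matrix spectra on sparse inhomogeneous random graphs. In the constant expected degree regime, the Benjamini-Schramm limit of the SBM is a multi-type Galton-Watson branching process whose offspring mean operator on type-space is (up to normalization) the integral operator associated with the graphon kernel $Q$. Consequently, its Perron and sub-Perron eigenvalues are exactly $\mu_1,\mu_2,\ldots,\mu_{r_0}$, and its bulk spectral radius is $\sqrt{\mu_1}$ by the Kesten-Stigum theory for branching processes; these quantities are what the top eigenvalues of $B$ should track.

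For the lower bound, namely that each $\mu_k$ with $k\in[r_0]$ is approximated by an eigenvalue of $B$, the plan is to construct approximate eigenvectors via power iteration. Fixing $\ell=c\log n$ with $c$ small, define $v_k\in\mathbb{R}^{\vec E}$ by $v_k(e)=f_k(X_{e_2})$ and form $w_k=B^\ell v_k$. A first-moment estimate, expanding $B^\ell$ as a sum over non-backtracking walks and coupling each walk to the branching process, yields $\mathbb{E}[w_k]\approx \mu_k^\ell v_k$; a matching variance estimate, controlled by the rarity of short cycles in the sparse graph, upgrades this to $w_k=\mu_k^\ell v_k+o_P(\mu_k^\ell)$. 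Gram-Schmidt across $k\in[r_0]$, using simplicity of the $\mu_k$ and their separation from $\sqrt{\mu_1}$, then produces $r_0$ distinct eigenvalues of $B$ within $o(1)$ of the targets.

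The complementary bound, that all other eigenvalues of $B$ have magnitude at most $\sqrt{\mu_1}+o(1)$, is where a high-moment trace method is needed. Let $\Pi$ be the projector onto the span of the $r_0$ approximate eigenvectors constructed above, and write $R=(I-\Pi)B(I-\Pi)$. The idea is to estimate $\mathbb{E}\,\mathrm{tr}\bigl((RR^\ast)^\ell\bigr)$ by summing over closed non-backtracking tangle-free walks, and to observe that because such walks traverse each used edge at least twice, the controlling combinatorial factor scales like $\mu_1^\ell$ rather than $\mu_1^{2\ell}$, giving $\|R\|\leq \sqrt{\mu_1}(1+o(1))$. The principal obstacle is precisely this tangle-free decomposition: one must separately bound walks that enter a tangle and show their contribution is lower-order, which requires quantitative control on short-cycle counts in the inhomogeneous random graph together with a careful matrix inequality relating $B^\ell$ to its tangle-free analogue $B^{(\ell)}$. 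This is the delicate part, and it is essentially the content of the cited Bordenave-Lelarge-Massouli\'e argument, which our theorem inherits once the multi-type branching-process identification of the local limit has been carried out for the SBM.
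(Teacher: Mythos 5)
This theorem is not proved in the paper at all; it is cited verbatim as Theorem~4 of Bordenave--Lelarge--Massouli\'e \cite{bordenave2018nonbacktracking}, and the paper's only additional remark is that the quantitative $O(1/\log n)$ error follows by combining Propositions~8, 19, 20 of that reference. Your sketch is a faithful reconstruction of the Bordenave--Lelarge--Massouli\'e argument: the identification of the Benjamini--Schramm limit with a multi-type Galton--Watson process whose mean operator has outlier eigenvalues $\mu_1,\ldots,\mu_{r_0}$ and bulk radius $\sqrt{\mu_1}$; the construction of approximate eigenvectors by near-power-iteration $B^\ell v_k$ with first- and second-moment control via coupling to the branching process; and the tangle-free decomposition together with a high-moment trace bound on the complementary block to show the remaining spectrum lies within $\sqrt{\mu_1}+o(1)$. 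This matches the cited proof in structure and key lemmas, so there is nothing to compare against within the present paper itself beyond confirming that your account of the external result is accurate.
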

Actually, from Proposition $8$, $19$ and $20$ in the text, we have that
\begin{align*}
    \lambda_k(B)=\mu_k+O(\frac{1}{\log n}), \text{ for } k\in [r_0] \text{,  and for } k>r_0, |\lambda_k(B)|\leq \sqrt{\mu_1} +O(\frac{1}{\log n}).
\end{align*}

\begin{thm}\label{t:eig}
Let $Q$ be a graphon satisfying Assumptions $1$ and $2$. Then with probability tending to $1$ as $n\rightarrow \infty$,
\begin{align*}
    \lambda_k(B)=\mu_k+O(\frac{1}{\log n}), \text{ for } k\in [r_0] \text{,  and for } k>r_0, |\lambda_k(B)|\leq \sqrt{\mu_1} +O(\frac{1}{\log n}).
\end{align*}
\end{thm}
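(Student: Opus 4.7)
The plan is to follow the proof of Theorem \ref{t:eigSBM} (i.e., Theorem~4 together with Propositions~8, 19, and 20 of \cite{bordenave2018nonbacktracking}) step by step, generalizing each argument from the finite ``type space'' of an SBM to the continuous type space $[0,1]$ of a graphon. The only place the SBM structure is used in that proof is to describe the local weak limit of the random graph as a multi-type Galton--Watson tree; replacing this by a continuous-type tree and replacing the block matrix by the integral operator induced by $Q$ preserves all the structural properties used.

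First I establish the local weak limit. Under Assumptions 1 and 2, the rooted neighborhood of a uniformly chosen vertex in $G$ converges in distribution to the marked Galton--Watson tree $\mathcal{T}$ defined as follows: the root has type $U \sim \mathrm{Uniform}[0,1]$, and given a vertex of type $x$ the offspring types form a Poisson point process on $[0,1]$ with intensity $Q(x,\cdot)$. By Assumption 2 the total offspring cardinality is $\mathrm{Poisson}(q)$ independently of the parent's type, so the quantitative coupling of balls of radius $O(\log n)$ in $G$ with independent copies of $\mathcal{T}$ used in \cite{bordenave2018nonbacktracking} goes through essentially verbatim.

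Next I adapt the two halves of the spectral argument. For the lower bound $\lambda_k(B) = \mu_k + O(1/\log n)$ with $k \le r_0$, I construct for each such $k$ an approximate eigenvector $\chi_k \in \mathbb{R}^{\vec E}$ whose coordinate at an oriented edge $e=(u,v)$ is a weighted sum of $f_k(X_w)$ over vertices $w$ reached by non-backtracking walks of length $\ell = c \log n$ issued from $e$. The conditional mean $\mathbb{E}[\chi_k(e)\mid X]$ approximates $\mu_k^\ell f_k(X_v)$ by iterating the integral operator with kernel $Q$, while the conditional variance is controlled by a second moment computation on $\mathcal{T}$; together these give $\|B\chi_k - \mu_k\chi_k\| = O(1/\log n)\|\chi_k\|$, and the simple-eigenvalue assumption lets a standard Weyl-type argument identify $\mu_1,\ldots,\mu_{r_0}$ one-by-one. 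For the complementary upper bound $|\lambda_k(B)| \le \sqrt{\mu_1} + O(1/\log n)$ with $k > r_0$, I use a trace moment estimate of the form $\mathrm{tr}((B^*)^\ell B^\ell) \le (\sqrt{\mu_1}+o(1))^{2\ell}\,\mathrm{poly}(n)$ for $\ell \asymp \log n$, where the enumeration of non-backtracking walk pairs reduces, via the branching approximation, to an explicit functional on $\mathcal{T}$.

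The main obstacle is recasting the finite-dimensional linear algebra of \cite{bordenave2018nonbacktracking} in operator-theoretic language: SBM matrix eigendecompositions must be replaced by the spectral decomposition of the compact self-adjoint integral operator with kernel $Q$. Because $Q$ is bounded (hence Hilbert--Schmidt) and the top $r_0$ eigenvalues are simple by hypothesis, the relevant spectral projections are rank one and the functional calculus is elementary; the error terms in Propositions~8, 19, and 20 of \cite{bordenave2018nonbacktracking} depend only on moments of the offspring distribution and on the spectral gap above $\sqrt{\mu_1}$, both of which are controlled by Assumptions 1 and 2. The adaptation is therefore mostly careful bookkeeping and the rate $O(1/\log n)$ survives intact.
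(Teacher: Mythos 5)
Your proposal takes essentially the same route as the paper, which simply asserts that Theorem~\ref{t:eig} is ``a direct generalization of Theorem~\ref{t:eigSBM}'' (i.e., of Theorem~4 and Propositions~8, 19, 20 of \cite{bordenave2018nonbacktracking}) and omits the details. Your sketch correctly identifies the three genuine pieces of that generalization---the local weak limit to a continuous-type Galton--Watson tree with $\mathrm{Poi}(q)$ offspring (which is type-independent exactly because of Assumption~2), the approximate-eigenvector construction via $\ell \asymp \log n$ non-backtracking sums of $f_k$ for the outlier eigenvalues, and the trace moment bound for the bulk---and correctly observes that the finite block matrix $\Pi W$ must be replaced throughout by the compact self-adjoint integral operator with kernel $Q$, which the boundedness and simple-eigenvalue hypotheses make a routine substitution. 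This is the same structure the paper relies on (and spells out more explicitly in Section~6 when it develops the graphon analogues of $J_{k,\ell}$, $P_{k,\ell}$, $I_{k,\ell}$ for the proof of Lemma~\ref{l:cconverge}), so no discrepancy in approach.
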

The proof is a direct generalization of Theorem \ref{t:eigSBM}. So we omit it here. This theorem immediately implies the following lemma.

\begin{lem}\label{kr0}
With probability tending to $1$ as $n\rightarrow \infty$,
\begin{align*}
    K=r_0.
\end{align*}
\end{lem}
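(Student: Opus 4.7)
The plan is to deduce this lemma directly from Theorem \ref{t:eig}, which gives the spectral convergence $\lambda_k(B) = \mu_k + O(1/\log n)$ for $k \in [r_0]$ and $|\lambda_k(B)| \le \sqrt{\mu_1} + O(1/\log n)$ for $k > r_0$, and then compare the two relevant error scales: the $O(1/\log n)$ spectral error on one hand and the cushion $e_1(n) = 1/\sqrt{\log n}$ built into the algorithm's threshold on the other. Crucially, $1/\sqrt{\log n}$ dominates $O(1/\log n)$ asymptotically, so the cushion absorbs all the spectral approximation errors.

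First, I would fix the constant gap $\Delta := |\mu_{r_0}| - \sqrt{\mu_1} > 0$, which is positive by the definition of $r_0$, and work on the high-probability event $\mathcal{E}_n$ on which the conclusion of Theorem \ref{t:eig} holds. On $\mathcal{E}_n$, Taylor expansion around $\mu_1 > 0$ gives $\sqrt{\lambda_1} = \sqrt{\mu_1} + O(1/\log n)$, so the algorithm's threshold satisfies
\begin{align*}
\sqrt{\lambda_1} + e_1(n) = \sqrt{\mu_1} + \frac{1}{\sqrt{\log n}} + O\!\left(\frac{1}{\log n}\right).
\end{align*}

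Next, I would verify the two inequalities that together pin down $K = r_0$. For $k \in [r_0]$, Theorem \ref{t:eig} gives $|\lambda_k| \ge |\mu_k| - O(1/\log n) \ge |\mu_{r_0}| - O(1/\log n) = \sqrt{\mu_1} + \Delta - O(1/\log n)$, which for $n$ large enough exceeds $\sqrt{\lambda_1} + e_1(n)$ by at least $\Delta/2$; hence $K \ge r_0$. For $k = r_0 + 1$, the theorem yields $|\lambda_{r_0+1}| \le \sqrt{\mu_1} + O(1/\log n)$, which is strictly smaller than $\sqrt{\lambda_1} + e_1(n)$ for $n$ large enough since $1/\sqrt{\log n} - O(1/\log n) > 0$; hence $K \le r_0$. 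Combining the two yields $K = r_0$ on $\mathcal{E}_n$, and since $\mathbb{P}(\mathcal{E}_n) \to 1$, the lemma follows.

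There is no real obstacle here: all the work is done by Theorem \ref{t:eig}. The only mildly delicate point is that the threshold $\sqrt{\lambda_1} + e_1(n)$ uses the random quantity $\lambda_1$ rather than the deterministic $\sqrt{\mu_1}$, but this is handled by the one-line Taylor expansion above (valid because $\mu_1 > 0$ is bounded away from zero by Assumption 2 once $q > 0$, and otherwise the graphon is trivially zero and there is nothing to estimate). The choice of $e_1(n) = 1/\sqrt{\log n}$ in the algorithm is precisely tuned so that it is much larger than the $O(1/\log n)$ fluctuations coming from Theorem \ref{t:eig} but still vanishes in the limit, which is what makes this cut-off work.
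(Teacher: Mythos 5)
Your proof is correct and takes exactly the paper's approach: the paper literally states that Lemma \ref{kr0} "immediately" follows from Theorem \ref{t:eig}, and you have simply spelled out the routine comparison of the $O(1/\log n)$ spectral fluctuations against the larger $e_1(n) = 1/\sqrt{\log n}$ cushion. The only cosmetic nitpick is in your closing parenthetical: the relevant implicit assumption is $\mu_1 > 1$ (forced by $r_0 \ge 1$, since $|\mu_{r_0}| > \sqrt{\mu_1}$ already requires $\mu_1 > 1$), not merely $q > 0$, but this does not affect the argument.
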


\subsection{Subgraph Counts}
Recall that
\begin{align*}
    \mathcal{C}_{ij}:=\frac{1}{\sqrt{n}}\sum\limits_{\ell=1}^n B_i(V_\ell)f_j(X_\ell).
\end{align*}
\begin{lem}\label{l:cconverge}
For $i\in [r_0]$,
\begin{align*}
    \mathcal{C}_{ii} \stackrel{(P)}{\longrightarrow} \theta_i,
\end{align*}
and
\begin{align*}
    \sum_{\ell=1}^\infty \mu_\ell\mathcal{C}_{i \ell}^2 \stackrel{(P)}{\longrightarrow} \mu_i \theta_i^2.
\end{align*}
as $n$ goes to infinity, where $\theta_i$ is a nonzero constant.
\end{lem}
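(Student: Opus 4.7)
The plan is to combine the local weak convergence of the sparse graph $G$ to a multi-type Galton--Watson branching process with type space $[0,1]$ with the local-statistic characterization of non-backtracking eigenvectors from \cite{bordenave2018nonbacktracking}, extended to general graphons as in Theorem \ref{t:eig}. In the coupling, the root of the tree has a uniform type $X_0 \in [0,1]$, a type-$x$ node has Poisson$(Q(x,\cdot))$ offspring whose types are drawn accordingly, and for a uniformly chosen vertex $V$ of $G$, the $\ell$-neighborhood of $V$ converges in total variation to the first $\ell$ generations of this tree whenever $\ell = o(\log n)$.

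First I would lift the power-iteration representation of $\xi_i$ from the cited work: for $\ell = c \log n$ with $c$ small enough, $\xi_i$ is well approximated in $\ell^2$ by $\mu_i^{-\ell} B^\ell \psi_i$ for a suitable initial vector $\psi_i$, so that $B_i(v)$ is well approximated by an explicit local statistic $\widetilde{M}_i^{(\ell)}(v)$ on the depth-$(\ell+1)$ neighborhood of $v$. Under the coupling, $\sqrt{n}\,\widetilde{M}_i^{(\ell)}(v)$ corresponds on the limiting tree (rooted at the local type of $v$) to $c_i M_i^{(\ell)}$, where
\[
M_i^{(r)} := \mu_i^{-r} \sum_{u \in T_r} f_i(X_u)
\]
is a martingale with respect to the tree's filtration and the normalizing constant $c_i \neq 0$ is fixed by $\|\xi_i\|_2 = 1$. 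A variance computation using the eigenvalue equation $\int Q(x,y) f_i(y)\,dy = \mu_i f_i(x)$ and $f_1 \equiv 1$ (from Assumption 2) shows that $\mathrm{Var}(M_i^{(r+1)} - M_i^{(r)})$ decays geometrically at rate $\mu_1 / \mu_i^2$, so $M_i^{(r)}$ is $L^2$-bounded precisely under the generalized Kesten--Stigum condition $|\mu_i|^2 > \mu_1$. Hence $M_i^{(r)} \to M_i^{(\infty)}$ in $L^2$ with $\mathbb{E}[M_i^{(\infty)} \mid X_0 = x] = f_i(x)$.

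For the first convergence, a law-of-large-numbers argument over the $n$ vertices (using that disjoint $\ell$-neighborhoods make the local statistics asymptotically independent) gives
\[
\mathcal{C}_{ii} = \frac{1}{\sqrt{n}} \sum_v B_i(v) f_i(X_v) \stackrel{(P)}{\longrightarrow} c_i \, \mathbb{E}[M_i^{(\infty)} f_i(X_0)] = c_i \, \mathbb{E}[f_i(X_0)^2] = c_i,
\]
so $\theta_i := c_i$ is the desired nonzero constant. For the weighted sum, expanding $Q(x,y) = \sum_\ell \mu_\ell f_\ell(x) f_\ell(y)$ in $L^2([0,1]^2)$ and exchanging summations rewrites
\[
\sum_{\ell=1}^\infty \mu_\ell \, \mathcal{C}_{i\ell}^2 = \frac{1}{n} \sum_{u,v} B_i(u) B_i(v) Q(X_u, X_v) = \frac{1}{n} B_i^{\top} Q^X B_i.
\]
The $u = v$ diagonal contributes $O(n^{-1})$; for $u \neq v$, the $\ell$-neighborhoods of $u$ and $v$ are disjoint with probability $1 - o(1)$, so the corresponding martingale limits become conditionally independent given $(X_u, X_v)$. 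A two-point LLN for quadratic local statistics then yields
\[
\frac{1}{n} B_i^{\top} Q^X B_i \stackrel{(P)}{\longrightarrow} c_i^2 \iint Q(x,y) f_i(x) f_i(y)\, dx\, dy = c_i^2 \mu_i = \mu_i \theta_i^2,
\]
using the eigenvalue equation and $\|f_i\|_2 = 1$.

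The main obstacle is extending the eigenvector-to-local-statistic approximation of \cite{bordenave2018nonbacktracking} from the SBM setting to general bounded graphons: this replaces finite-dimensional matrix computations with operator-theoretic arguments on $L^2[0,1]$ and requires careful tracking of how the $L^2$ convergence of the tree martingale propagates back into $\ell^2$ control of $\xi_i$, uniformly across the relevant range of depths $\ell = \Theta(\log n)$. Once this approximation and the branching-process coupling are in hand, the remaining steps (martingale limit on the tree, one- and two-vertex LLN) follow by standard arguments, and the constants $\theta_i$ emerge naturally from the normalization of $\xi_i$.
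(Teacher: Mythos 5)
Your proposal follows essentially the same strategy as the paper: couple the local neighborhoods of $G$ to a graphon-indexed branching process, approximate the non-backtracking eigenvector entries $B_i(v)$ by local statistics evaluated on the coupled tree, then pass to the limit with a law-of-large-numbers over vertices (and over disjoint pairs of vertices for the quadratic functional). You also correctly identify the normalization $\theta_i$ as coming from $\|\xi_i\|_2=1$, the rewriting $\sum_\ell\mu_\ell\mathcal{C}_{i\ell}^2 = \tfrac{1}{n}B_i^\top Q^X B_i$, and that the generalized Kesten--Stigum condition $\mu_i^2>\mu_1$ is exactly what drives the geometric decay of martingale increments and hence $L^2$-boundedness.

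The one genuine point of divergence is the choice of local statistic. You approximate $B_i(v)$ by the one-sided power-iteration functional $\widetilde M_i^{(\ell)}(v)$, whose tree analogue is the depth-$\ell$ martingale $M_i^{(r)}=\mu_i^{-r}\sum_{u\in T_r}f_i(X_u)$. The paper instead imports (and generalizes to graphons) the bilateral ``backtrack-exactly-once'' functionals $Q_{k,\ell}$, $J_{k,\ell}$, and their graph counterparts $P_{k,\ell}(e)$, $I_{k,\ell}(v)$ from \cite{bordenave2018nonbacktracking}; the precise statement that makes everything run is Lemma~\ref{l:Idifference}, which says $s\sqrt n\,B_k(v)$ is $L^1$-close in vertex-average to $I_{k,\ell}(v)/\mu_k^{2\ell}$, and the normalization $s=\sqrt{\alpha\rho_k'}$ is computed from $\rho_k'=\lim\tfrac{1}{\alpha n}\sum_e P_{k,\ell}^2(e)/\mu_k^{4\ell}$. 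These bilateral quantities are essentially what one obtains when computing $\|B^\ell\chi_k\|^2$ and the projection of $B^\ell\chi_k$ onto $\xi_k$, so they are precisely the bookkeeping that pins down both the direction \emph{and} the constant. Your one-sided $M_i^{(\ell)}$ captures the direction ($\mathbb E[M_i^{(\infty)}\mid X_0=x]=f_i(x)$) but getting the normalization $c_i$ and the $\ell^2$ control uniformly over $\ell=\Theta(\log n)$ is exactly the content that the $Q_{k,\ell}$/$P_{k,\ell}$ machinery provides; you flag this as the main obstacle, and indeed the paper's entire Section~6.2 is devoted to restating those results for graphons. So the two routes are conceptually the same, but the paper's choice of functional is not cosmetic---it is what lets the eigenvector approximation and normalization be proved rather than posited. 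If you were to carry out your version, you would end up reconstructing an analogue of $P_{k,\ell}$ anyway.
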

We will prove this lemma in later sections. As all $\mu_i$'s are simple for $i\in [r_0]$, corresponding eigenfunctions are unique up to signs. Notice that if we reverse the sign of $f_i$, then the sign of $\theta_i$ is also reversed. So there is an orientation of $f_1,\cdots,f_{r_0}$ such that all the corresponding $\theta_i$'s are positive. From now on, without loss of generality, we fix such orientation.

From now on, we work in the case when $K=r_0$. Recall from our definition that for $k\in [K]$,
\begin{align*}
    A_{kk}:=\sum\limits_{1\leq i,j\leq n}B_k(i)B_k(j)\mathbbm{1}[i\sim  j].
\end{align*}
Given $X_1,\cdots,X_n$, denote $Q^X$ as a matrix where its $(i,j)$-th entry equals to $Q(X_i,X_j)$.
\begin{lem}
Fix $E_1$. For $k\in [K]$ and $0<\gamma<\frac{1}{2}$, with probability larger than $1-2 \|B_k\|_2^4 \epsilon M n^{-2\gamma}$,
\begin{align*}
    |A_{kk}-\epsilon B_k^t Q^X B_k|\leq 2n^{-\frac{1}{2}+\gamma}.
\end{align*}
\end{lem}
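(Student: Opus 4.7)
The plan is a conditional Chebyshev argument. I would condition on the latent variables $X=(X_1,\ldots,X_n)$ together with $E_1$; under this conditioning $B_k$ is deterministic and the indicators $\{\mathbbm{1}[\{i,j\}\in E_2]\}_{i<j}$ are mutually independent Bernoullis, because the sample-splitting step labels each edge of $E$ into $E_1$ or $E_2$ by independent coin flips. Symmetrizing,
\[
A_{kk}=2\sum_{i<j}B_k(i)B_k(j)\,\mathbbm{1}[\{i,j\}\in E_2]
\]
turns $A_{kk}$ into a linear combination of independent Bernoullis with deterministic coefficients $2B_k(i)B_k(j)$, which is the ideal setting for Chebyshev.

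For the mean, each Bernoulli has parameter $\tilde p_{ij}=0$ when $\{i,j\}\in E_1$, and otherwise $\tilde p_{ij}=\epsilon p_{ij}/\bigl(1-(1-\epsilon)p_{ij}\bigr)$ by Bayes' rule, where $p_{ij}=\mathbb{P}(\{i,j\}\in E\mid X)$ is the (sparse) edge probability controlled by $M/n$. I would replace $\tilde p_{ij}$ by its leading term $\epsilon p_{ij}$, drop the $\{i,j\}\in E_1$ restriction to recover the full off-diagonal sum, and absorb the diagonal $i=j$; each of these three bookkeeping steps contributes at most $n^{-1/2+\gamma}$ in magnitude because $|E_1|=O(n)$ w.h.p., $p_{ij}\le M/n$, and $\|B_k\|_\infty=O(n^{-1/2})$ by the eigenvector normalization. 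What remains is exactly $\epsilon B_k^tQ^XB_k$ (in the paper's $\rho_n$-scaling).

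For the variance, independence gives
\[
\VAR(A_{kk}\mid X,E_1)=4\!\!\sum_{\substack{i<j\\ \{i,j\}\notin E_1}}\!\!B_k(i)^2B_k(j)^2\,\tilde p_{ij}(1-\tilde p_{ij})\le\frac{2\epsilon M\|B_k\|_2^4}{n},
\]
using $\tilde p_{ij}\le\epsilon M/n$ and extending the sum to all ordered pairs at a factor-$2$ cost. Chebyshev's inequality at deviation $2n^{-1/2+\gamma}$ then yields a conditional failure probability of at most $\tfrac12\epsilon M\|B_k\|_2^4\,n^{-2\gamma}$, well within the claimed $2\|B_k\|_2^4\epsilon M n^{-2\gamma}$. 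The bound is independent of $X$, so averaging over $X$ preserves it.

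The main obstacle I anticipate is the mean approximation: arranging the three corrections (the missing $\{i,j\}\in E_1$ pairs, the Taylor remainder in the denominator of $\tilde p_{ij}$, and the diagonal $i=j$ term) so that their aggregate magnitude stays below $2n^{-1/2+\gamma}$. Each piece is individually small thanks to the sparsity of $E_1$ and the pointwise control on $B_k$, but coordinating them against the stated tolerance is the one nontrivial input; the variance estimate and the Chebyshev step are then routine.
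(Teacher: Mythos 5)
Your proposal matches the paper's argument: fix $E_1$, exploit the (conditional) independence of the $E_2$-edge indicators across pairs, bound the variance by $2\|B_k\|_2^4\epsilon M/n$, and conclude by Chebyshev. Where the paper simply asserts $|\mathbb{E}[A_{kk}]-\epsilon B_k^tQ^XB_k|=o(n^{-1/2+\gamma})$, you spell out the three small corrections (the Bayes' adjustment from conditioning on $E_1$, the pairs already in $E_1$, and the diagonal), which is added rigor on the same route rather than a different one.
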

\begin{proof}
Recall that vertices $i$ and $j$ are connected by an edge with probability $\epsilon Q^X(i,j)/n$. Define $D_{ij}=B_k(i)B_k(j)\mathbbm{1}[i\sim  j]$. Then $D_{ij}$ and $D_{i'j'}$ are independent whenever $i\sim j$ and ${i'}\sim {j'}$ are different edges. Thus
\begin{align*}
    \VAR(A_{kk}) &= \VAR\left(2\sum\limits_{1\leq i<j\leq n}B_k(i)B_k(j)\mathbbm{1}[i\sim  j]\right)\\
    &\leq 4\sum\limits_{1\leq i<j\leq n}B_k(i)^2B_k(j)^2\frac{\epsilon M}{n}\\
    &\leq 2 \|B_k\|_2^4 \frac{\epsilon M}{n}.
\end{align*}
So by a second moment estimate,
\begin{align*}
    \mathbb{P}[|A_{kk}-\mathbb{E}[A_{kk}]|\geq n^{-\frac{1}{2}+\gamma}] \leq \frac{\VAR(A_{kk})}{n^{-1+2\gamma}}\leq 2\|B_k\|_2^4 \epsilon M n^{-2\gamma}.
\end{align*}
As $|\mathbb{E}[A_{kk}]-\epsilon B_k^t Q^X B_k|=o(n^{-\frac{1}{2}+\gamma})$, the statement follows.
\end{proof}

By Lemma \ref{l:cconverge}, we have
\begin{align*}
    B_k^t Q^X B_k&=  \sum_{i,j=1}^n B_k(i) Q^X(i,j) B_k(j)\\
    &=\sum_{i,j=1}^n B_k(i) \left (\sum_{\ell=1}^\infty \mu_\ell f_\ell(X_i)f_\ell(X_j) \right )B_k(j) \\
    &= \sum_{\ell=1}^\infty \mu_\ell \mathcal{C}^2_{k\ell}\\
    &\stackrel{(P)}{\longrightarrow} \mu_k \theta_k^2,
\end{align*}
as $n$ goes to infinity. We define $d_1$ as the maximum degree of vertices in $G_1$. Then $d_1 =O(\log n)$ with high probability. This implies that $\|B_i\|_2=O(\log n)$. So $\epsilon^{-1}A_{kk}$ converges to $\mu_k\theta_k^2$ in probability. By Theorem \ref{t:eig}, $P_{kk}$ converges to $\theta_k^2$ in probability. Recall that we use $Q^X_w$ to denote the $w$-th column of $Q^X$.
\begin{lem}
Fix $E_1$. For any multi-index $\alpha$ and $0<\gamma<\frac{1}{2}$, with probability larger than $1-2|\alpha|!(\epsilon M)^{|\alpha|}\prod_{\ell=1}^K \|B_{I_\ell^\alpha}\|_2^{2\alpha_i} n^{-2\gamma}$, we have that
\begin{align*}
    |A_{\alpha}-\sum_{w=1}^n\prod_{\ell=1}^{|\alpha|} \epsilon B_{I^{\alpha}_\ell}^tQ^X(w)| \leq 2n^{-\frac{|\alpha|-1}{2}+\gamma}.
\end{align*}
\end{lem}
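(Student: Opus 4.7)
The plan is to mirror the second-moment argument used for the preceding $\alpha=(k,k)$ lemma, keyed on the fact that the edge-splitting procedure renders $E_1$ and $E_2$ conditionally independent given $X$. Indeed, for each pair $(i,j)$, the three outcomes ``$(i,j)\in E_1$'', ``$(i,j)\in E_2$'', ``no edge'' occur with probabilities $(1-\epsilon)Q^X(i,j)/n$, $\epsilon Q^X(i,j)/n$, $1-Q^X(i,j)/n$, and are jointly independent across pairs. Consequently, conditionally on $X$ and on $E_1$ (so that each $B_{I_\ell^\alpha}$ is a deterministic weight vector), the indicators $\{\mathbbm{1}[i\sim w]\}$ appearing in $A_\alpha$ are independent Bernoulli$(\epsilon Q^X(i,w)/n)$ variables.

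First I compute the expectation. Since the vertices in any summand of $A_\alpha$ are all distinct, the $|\alpha|$ star-edges $(i_\ell,w)$ are distinct, so their indicators are independent and
\begin{align*}
\mathbb{E}[A_\alpha\mid X,E_1] \;=\; \sum_{i_1,\ldots,i_{|\alpha|},w \text{ distinct}} \prod_{\ell=1}^{|\alpha|} B_{I_\ell^\alpha}(i_\ell)\cdot\frac{\epsilon Q^X(i_\ell,w)}{n}.
\end{align*}
Dropping the distinctness constraint introduces terms in which two indices coincide; each such term surrenders one summation index and so is smaller by a factor $n^{-1}$, and using that $\|B_i\|_2$ is polylogarithmic in $n$ (from the $d_1=O(\log n)$ max-degree bound on $G_1$ used in the previous lemma), the aggregate ``diagonal'' contribution is $o(n^{-(|\alpha|-1)/2+\gamma})$. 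What remains factorizes as $\sum_w \prod_\ell (\epsilon/n)\,B_{I_\ell^\alpha}^t Q^X_w$, matching the target up to the same scaling convention used for $A_{kk}$.

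For the variance, write $A_\alpha = \sum_\sigma c_\sigma \mathbbm{1}_\sigma$ with $\sigma=(i_1,\ldots,i_{|\alpha|},w)$ ranging over distinct-index tuples, $c_\sigma=\prod_\ell B_{I_\ell^\alpha}(i_\ell)$, and $\mathbbm{1}_\sigma=\prod_\ell \mathbbm{1}[i_\ell\sim w]$. By edge-indicator independence, $\mathrm{Cov}(c_\sigma\mathbbm{1}_\sigma, c_{\sigma'}\mathbbm{1}_{\sigma'})$ vanishes unless the edge sets $S_\sigma,S_{\sigma'}$ overlap, and when they share $s\geq 1$ edges the covariance is at most $(\epsilon M/n)^{2|\alpha|-s}$. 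The dominant (full-overlap $\sigma=\sigma'$) contribution gives
\begin{align*}
\sum_\sigma c_\sigma^2\cdot (\epsilon M/n)^{|\alpha|} \;\leq\; n\prod_{i=1}^K \|B_i\|_2^{2\alpha_i}\cdot(\epsilon M/n)^{|\alpha|} \;=\; (\epsilon M)^{|\alpha|}\,n^{-(|\alpha|-1)}\prod_{i=1}^K \|B_i\|_2^{2\alpha_i},
\end{align*}
using $\sum_{w,i_1,\ldots,i_{|\alpha|}}\prod_\ell B_{I_\ell^\alpha}^2(i_\ell)\leq n\prod_i \|B_i\|_2^{2\alpha_i}$. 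Chebyshev's inequality then bounds the deviation probability at the advertised rate, and combining with the expectation error yields the claim.

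The main obstacle is the combinatorial bookkeeping for partial overlaps $1\leq s<|\alpha|$: two stars can share edges either by sharing their center together with $s$ leaves, or by having the center of one be a leaf of the other (with more intricate mixed patterns once $s>1$). For each such overlap type one must verify that summing $|c_\sigma c_{\sigma'}|$ over label assignments gives at most $\prod_i\|B_i\|_2^{2\alpha_i}$ times an $\alpha$-independent constant, so that when balanced against the $(\epsilon M/n)^{2|\alpha|-s}$ covariance bound and the $n^{s}$ gain from free-leaf summations, each class contributes at the same order $(\epsilon M)^{|\alpha|}n^{-(|\alpha|-1)}\prod_i\|B_i\|_2^{2\alpha_i}$ as the full-overlap one. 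The factor $|\alpha|!$ in the stated probability bound then absorbs the number of such overlap patterns and label identifications.
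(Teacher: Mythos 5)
Your proposal follows the same strategy as the paper: condition on $X$ and $E_1$ so the star-edge indicators become independent Bernoullis, compute the mean (and account for dropping the distinctness constraint), bound the variance by a diagonal term plus overlap terms, and close via Chebyshev. The diagonal-term computation and the resulting probability bound match the paper's exactly, and the paper likewise handles the partial-overlap covariance terms only schematically (via constants $C_a$, $C$ and a case split on whether the centers $w,w'$ coincide), so your acknowledging the combinatorial bookkeeping without carrying it out fully is consistent with the level of detail in the source.
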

\begin{proof}
Recall that 
\begin{align*}
    A_{\alpha}:=\sum_{1\leq i_1,\cdots,i_{|\alpha|},w\leq n}\prod_{\ell=1}^{|\alpha|} B_{I^{\alpha}_\ell}({i_\ell})\mathbbm{1}[{i_\ell}\sim w],
\end{align*}
where all $i_\ell$ are disjoint. Define $D_{i,w}:=\prod_{\ell=1}^{|\alpha|} B_{I^{\alpha}_\ell}({i_\ell})\mathbbm{1}[{i_\ell}\sim w]$. Define $E(D_{i,w})$ to be the set of all edges ${i_\ell}\sim w$. We use $[n]^k_<$ to denote the set consisting of all strictly ordered $k$-tuples from the set $[n]$. More precisely, we define
\begin{align*}
    [n]^k_<=\{(i_1,\cdots,i_k)\in [n]^k:1\leq i_1<\cdots<i_k\leq n\}.
\end{align*}
Thus we can also write $A_\alpha$ as 
\begin{align*}
    A_{\alpha}&=|\alpha|!\sum_{i\in [n]^{|\alpha|}_<, w\in [n]} \prod_{\ell=1}^{|\alpha|} B_{I^{\alpha}_\ell}({i_\ell})\mathbbm{1}[{i_\ell}\sim w]\\
    &=|\alpha|!\sum_{i\in [n]^{|\alpha|}_<, w\in [n]} D_{i,w}
\end{align*}
Note that $\prod_{\ell=1}^{|\alpha|} \mathbbm{1}[{i_\ell}\sim w]$ follows Bernoulli $(\prod_{\ell=1}^{|\alpha|}\frac{\epsilon Q^X(i_\ell,w)}{n})$ distribution. So variance of $D_{i,w}$ is bounded by $\prod_{\ell=1}^{|\alpha|}B_{I_\ell^\alpha}({i_\ell})^2\frac{\epsilon M}{n} $. Also, we notice that $D_{i,w}$ and $D_{i',w'}$ are independent whenever $E(D_{i,w})$ and $E(D_{i',w'})$ are disjoint. When they are not, we write $(i,w)\sim^a (i',w')$, if $E(D_{i,w})$ and $E(D_{i',w'})$ 
have $a$ edges in common. If $(i,w)\sim^a (i',w')$, then $|\mathrm{Cov}(D_{i,w},D_{i',w'})|\leq (\frac{\epsilon M}{n})^{2|\alpha|-a} \prod_{\ell=1}^{|\alpha|}|B_{I^\alpha_\ell}({i_\ell})B_{I^\alpha_\ell}({i'_\ell})|$. Then we have that
\begin{align*}
    &\VAR(A_\alpha)\\
    &= (|\alpha|!)^2\sum_{i\in [n]^{|\alpha|}_<, w \in [n]} \VAR(D_{i,w})+ (|\alpha|!)^2\sum_{i,i'\in [n]^{|\alpha|}_<, w,w' \in [n], (i,w)\sim (i',w')} \mathrm{Cov}(D_{i,w},D_{i',w'})\\
    &\leq |\alpha|!\sum_{i\in [n]^{|\alpha|}, w \in [n]} \prod_{\ell=1}^{|\alpha|}B_{I_\ell^\alpha}({i_\ell})^2\frac{\epsilon M}{n}\\
    &\quad +\sum_{a=1}^{|\alpha|-1} \sum_{i,i'\in [n]^{|\alpha|}, w,w' \in [n], (i,w)\sim^a (i',w')} (\frac{\epsilon M}{n})^{2|\alpha|-a} \prod_{\ell=1}^{|\alpha|}|B_{I^\alpha_\ell}({i_\ell})B_{I^\alpha_\ell}({i'_\ell})|\\
    &\leq |\alpha|!\frac{(\epsilon M)^{|\alpha|}}{n^{|\alpha|-1}}\prod_{\ell=1}^K \|B_{I_\ell^\alpha}\|_2^{2\alpha_\ell}
    +\sum_{a=1}^{|\alpha|-1} C_a\frac{(\epsilon M)^{2|\alpha|-a}}{n^{|\alpha|-1}}\prod_{\ell=1}^K\|B_{I_\ell^\alpha}\|_2^{2\alpha_\ell}
    +C\frac{(\epsilon M)^{2|\alpha|-1}}{n^{2|\alpha|-1}}\prod_{\ell=1}^K\|B_{I_\ell^\alpha}\|_1^{2\alpha_\ell}\\
    &\leq 2|\alpha|!\frac{(\epsilon M)^{|\alpha|}}{n^{|\alpha|-1}}\prod_{\ell=1}^K \|B_{I_\ell^\alpha}\|_2^{2\alpha_\ell},
\end{align*}
where $C_a$ and $C$ are some constants. The second last inequality follows by separating cases into whether $w=w'$ and that $\|B_i\|_1^2\leq n \|B_i\|_2^2$. So by a second moment estimate, we have that
\begin{align*}
    \mathbb{P}[|A_\alpha-\mathbb{E}[A_\alpha]|\geq n^{-\frac{|\alpha|-1}{2}+\gamma} ]&\leq \frac{2|\alpha|!\frac{(\epsilon M)^{|\alpha|}}{n^{|\alpha|-1}}\prod_{\ell=1}^K \|B_{I_\ell^\alpha}\|_2^{2\alpha_\ell}}{n^{-|\alpha|+1+2\gamma}}\\
    &\leq 2|\alpha|!(\epsilon M)^{|\alpha|}\prod_{\ell=1}^K \|B_{I_\ell^\alpha}\|_2^{2\alpha_\ell} n^{-2\gamma}.
\end{align*}
As $|\mathbb{E}[A_\alpha]-\sum_{w=1}^n\prod_{\ell=1}^{|\alpha|} \epsilon B_{I^{\alpha}_\ell}^tQ^X(w)|=o(n^{-\frac{|\alpha|-1}{2}+\gamma})$, the statement follows.
\end{proof}
This lemma implies that with probability larger than $1-O(n^{-\gamma})$, we have that, for any $0\leq \alpha\leq \mathbf{N}$,
\begin{align*}
    |n^{\frac{|\alpha|}{2}-1}\epsilon^{-|\alpha|}A_{\alpha}- n^{\frac{|\alpha|}{2}-1}\sum_{w=1}^n\prod_{\ell=1}^{|\alpha|}  B_{I^{\alpha}_\ell}^tQ^X(w)| \leq 2n^{-\frac{1}{2}+\gamma}\epsilon^{-|\alpha|}.
\end{align*}
Therefore, for any $0\leq \alpha\leq \mathbf{N}$,
\begin{align*}
    n^{\frac{|\alpha|}{2}-1}\epsilon^{-|\alpha|}A_{\alpha} \stackrel{(P)}{\longrightarrow} n^{\frac{|\alpha|}{2}-1}\sum_{w=1}^n\prod_{\ell=1}^{|\alpha|}  B_{I^{\alpha}_\ell}^tQ^X(w),
\end{align*}
as $n$ goes to infinity.

Recall that, for each multi-index $\alpha=(\alpha_1,\cdots,\alpha_K)$, we write $f_\alpha = \prod_{i=1}^{|\alpha|} f_{i}^{\alpha_i}.$
\begin{lem}
For $0\leq \alpha\leq \mathbf{N}$, we have that
\begin{align*}
    n^{\frac{|\alpha|}{2}-1}\epsilon^{-|\alpha|}A_{\alpha}\stackrel{(P)}{\longrightarrow} \prod_{i=1}^K (\theta_i \mu_i)^{\alpha_i} \int_{[0,1]} f_\alpha(x) dx,
\end{align*}
as $n$ goes to infinity.
\end{lem}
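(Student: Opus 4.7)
The plan is to chain the preceding concentration lemma, a law of large numbers over the vertex labels, and the spectral identities of Lemma~\ref{l:cconverge}. By that lemma, $n^{|\alpha|/2-1}\epsilon^{-|\alpha|}A_\alpha$ equals, up to an $o_P(1)$ error (with the $n^{-|\alpha|}$ factor implicit from edge probabilities $\epsilon Q^X/n$), the deterministic-given-$(E_1,X)$ quantity $\frac{1}{n}\sum_{w=1}^n\prod_{\ell=1}^{|\alpha|}S_{I^\alpha_\ell}(X_w)$, where
\begin{align*}
S_i(x)\;:=\;\frac{1}{\sqrt n}\sum_{v=1}^n B_i(v)\,Q(X_v,x)\;=\;\sum_{m\ge 1}\mu_m\,\mathcal{C}_{i,m}\,f_m(x)
\end{align*}
is obtained by expanding $Q(y,x)=\sum_m\mu_m f_m(y)f_m(x)$ in the definition of the contraction. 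The claim thus reduces to showing this empirical average converges in probability to $\prod_{i=1}^K(\theta_i\mu_i)^{\alpha_i}\int_{[0,1]} f_\alpha\,dx$.

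The first step I would take is an empirical-to-integral reduction. Conditioning on $E_1$ freezes the $B_i$'s, and the $X_w$'s remain i.i.d.\ Uniform on $[0,1]$. A Chebyshev estimate---using the polylogarithmic bound $\|S_i\|_\infty\le M\|B_i\|_2=O(\sqrt{\log n})$ coming from the $O(\log n)$ maximum degree of $G_1$---gives
\begin{align*}
\frac{1}{n}\sum_{w=1}^n\prod_{\ell=1}^{|\alpha|}S_{I^\alpha_\ell}(X_w)\;=\;\int_{[0,1]}\prod_{\ell=1}^{|\alpha|}S_{I^\alpha_\ell}(x)\,dx+o_P(1).
\end{align*}
Orthonormality of $\{f_m\}$ expands the integral as $\sum_{(m_\ell)}\prod_\ell\mu_{m_\ell}\mathcal{C}_{I^\alpha_\ell,m_\ell}\cdot\int\prod_\ell f_{m_\ell}\,dx$. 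The diagonal tuple $m_\ell=I^\alpha_\ell$ yields $\prod_\ell\mu_{I^\alpha_\ell}\mathcal{C}_{I^\alpha_\ell,I^\alpha_\ell}\int f_\alpha\,dx$, which by Lemma~\ref{l:cconverge} and the regrouping $\prod_\ell\mu_{I^\alpha_\ell}\theta_{I^\alpha_\ell}=\prod_i(\theta_i\mu_i)^{\alpha_i}$ converges in probability to the target limit.

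The main obstacle is to show the off-diagonal tuples contribute $o_P(1)$. The clean route is the single-factor $L^2$-approximation $\|S_i-\theta_i\mu_i f_i\|_2\stackrel{(P)}{\longrightarrow} 0$, which by Parseval amounts to $\mu_i^2(\mathcal{C}_{ii}-\theta_i)^2+\sum_{m\ne i}\mu_m^2\mathcal{C}_{i,m}^2\to 0$. The first summand vanishes by Lemma~\ref{l:cconverge}; the tail $\sum_{m\ne i}\mu_m^2\mathcal{C}_{i,m}^2$ is the delicate point, because Lemma~\ref{l:cconverge} controls only the unsquared weighted sum $\sum_m\mu_m\mathcal{C}^2$. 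I would split into the finitely many top indices $m\in[r_0]\setminus\{i\}$---for which $\mathcal{C}_{i,m}\stackrel{(P)}{\longrightarrow} 0$ follows from an orthogonality argument between the distinct $B$-eigenvectors $\xi_i,\xi_m$, paralleling the proof of Lemma~\ref{l:cconverge}---and the bulk $m>r_0$ where $|\mu_m|\le\sqrt{\mu_1}$; for the bulk, I would re-run the Lemma~\ref{l:cconverge} coupling-to-branching-process argument with the kernel $Q^2$ (whose eigenvalues are $\mu_m^2$ with the same eigenfunctions), yielding $\sum_m\mu_m^2\mathcal{C}_{i,m}^2\stackrel{(P)}{\longrightarrow}\mu_i^2\theta_i^2$ and hence the needed tail bound. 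Once each factor convergence $\|S_{I^\alpha_\ell}-\theta_{I^\alpha_\ell}\mu_{I^\alpha_\ell}f_{I^\alpha_\ell}\|_2\to 0$ is established, an iterated Cauchy--Schwarz telescoping---whose cross factors stay bounded since $\|S_{I^\alpha_\ell}\|_2\to |\mu_{I^\alpha_\ell}\theta_{I^\alpha_\ell}|$ from the same tail control---transfers the single-factor convergence to the product integral, finishing the proof.
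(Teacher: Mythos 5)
You have the right objects in hand---your $S_i$ is exactly the paper's $A_j$, the diagonal tuple does give the target limit, and the bottleneck is indeed controlling the deviation $S_i-\theta_i\mu_i f_i$. But the route you propose for closing that gap is substantially harder than the paper's, and it hides a missing idea.

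The paper does not prove the $L^2$ estimate $\|S_i-\theta_i\mu_i f_i\|_2\to 0$ at all, precisely because (as you correctly observe via Parseval) that would require $\mu_m^2$-weighted control of the coefficients, which Lemma~\ref{l:cconverge} does not give. Instead it proves a \emph{pointwise} bound via a $\mu$-weighted Cauchy--Schwarz:
\begin{align*}
|S_j(x)-\theta_j\mu_j f_j(x)|\;=\;\Bigl|\sum_{\ell}\mu_\ell f_\ell(x)\bigl(\mathcal{C}_{j\ell}-\delta_{j\ell}\theta_j\bigr)\Bigr|
\;\le\;\Bigl(\sum_\ell\mu_\ell f_\ell(x)^2\Bigr)^{1/2}\Bigl(\sum_\ell\mu_\ell\,|\mathcal{C}_{j\ell}-\delta_{j\ell}\theta_j|^2\Bigr)^{1/2},
\end{align*}
and then uses $\sum_\ell\mu_\ell f_\ell(x)^2=Q(x,x)\le M$. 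This converts the problem into controlling the $\mu$-weighted sum $\sum_\ell\mu_\ell|\mathcal{C}_{j\ell}-\delta_{j\ell}\theta_j|^2$, which, after expanding the square, is exactly what both parts of Lemma~\ref{l:cconverge} already deliver. So no $\mu^2$-weighted estimate is needed, and no new branching-process analysis. Your proposal to ``re-run the coupling argument with kernel $Q^2$'' is both unnecessary and not a trivial re-run: $\mathcal{C}_{i,m}$ is built from the eigenvectors $B_i$ of the non-backtracking matrix of the \emph{original} graph $G_1$, so replacing $Q$ by $Q^2$ changes the model and hence $B_i$; what one would actually need is control of the two-step quadratic form $B_i^t(Q^X)^2B_i$ with the original $B_i$, which is a genuinely new estimate, not a re-application of Lemma~\ref{l:cconverge}.

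A secondary issue: your ``empirical-to-integral'' step by Chebyshev conditional on $E_1$ is not quite right, because $S_i(x)=\tfrac{1}{\sqrt n}\sum_v B_i(v)Q(X_v,x)$ depends on the \emph{entire} vector $(X_1,\dots,X_n)$ through $\mathcal{C}_{i,m}$, so fixing $E_1$ does not make the summands $\prod_\ell S_{I^\alpha_\ell}(X_w)$ conditionally independent in $w$. The paper avoids this altogether: it replaces each $S_{I^\alpha_\ell}(X_w)$ by $\theta_{I^\alpha_\ell}\mu_{I^\alpha_\ell}f_{I^\alpha_\ell}(X_w)$ \emph{inside} the empirical average using the sup bound (plus a uniform bound $|S_j(x)|^2\le M\sum_\ell\mu_\ell\mathcal{C}_{j\ell}^2$ for the telescoping), and only then applies the law of large numbers to $\tfrac1n\sum_w f_\alpha(X_w)$, which is a sum of genuine i.i.d.\ terms.
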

\begin{proof}
Define $A_j(X_w)=\sum_{\ell=1}^\infty \mu_\ell f_\ell(X_w) \mathcal{C}_{j \ell}$ and $B_j(X_w)=\mu_{_j} f_{j}(X_w) \theta_{j}$. By Lemma \ref{l:cconverge}, we have that for $j\in [K]$,
\begin{align*}
    |A_j(X_w)-B_j(X_w)|&\leq \sum_{\ell=1}^\infty \mu_\ell f_\ell(X_w) |\mathcal{C}_{j \ell}-\delta_{j,\ell}\theta_{j}|\\
    &\leq \left(M \sum_{\ell=1}^\infty \mu_\ell |\mathcal{C}_{j \ell}-\delta_{j,\ell}\theta_{j}|^2\right)^{\frac{1}{2}}\\
    &\leq \left(M |\sum_{\ell=1}^\infty \mu_\ell \mathcal{C}_{j \ell}^2 -\mu_{j}\theta_{j}^2|+M \mu_{j} |\mathcal{C}_{jj}-\theta_{j}|^2\right)^{\frac{1}{2}},
\end{align*}
which goes to zero in probability. We also note that with high probability,
\begin{align*}
    |A_j(X_w)|^2\leq K \sum_{\ell=1}^\infty \mu_\ell \mathcal{C}_{I^{\alpha}_j,\ell}^2 \leq 2K\mu_\ell \theta_{I^{\alpha}_j,\ell}.
\end{align*}
Therefore,
\begin{align*}
&\left|\frac{1}{n}\sum_{w=1}^n\prod_{j=1}^{|\alpha|}  \sum_{\ell=1}^\infty \mu_\ell f_\ell(X_w) \mathcal{C}_{I^{\alpha}_j,\ell}- \frac{1}{n}\sum_{w=1}^n\prod_{j=1}^{|\alpha|}  \mu_{I^{\alpha}_j} f_{I^{\alpha}_j}(X_w) \theta_{I^{\alpha}_j}\right|\\
&\leq \frac{1}{n}\sum_{w=1}^n \left|\prod_{j=1}^{|\alpha|} A_{I^{\alpha}_j}(X_w)- \prod_{j=1}^{|\alpha|} B_{I^{\alpha}_j}(X_w)\right|\\
&\leq |\alpha| (2K\mu_\ell \theta_{I^{\alpha}_j,\ell})^{|\alpha|} \sup_{j\in [K]}|A_j-B_j|\\
&\stackrel{(P)}{\longrightarrow} 0,
\end{align*}
as $n$ goes to infinity. This implies that
\begin{align*}
n^{\frac{|\alpha|}{2}-1}\sum_{w=1}^n\prod_{j=1}^{|\alpha|} B_{I^{\alpha}_j}^tQ^X(w)&=\frac{1}{n}\sum_{w=1}^n\prod_{j=1}^{|\alpha|} \sum_{i=1}^n \sqrt{n}B_{I^{\alpha}_j}(i)\left (\sum_{\ell=1}^\infty \mu_\ell f_\ell(X_i)f_\ell(X_w) \right )\\ 
&=\frac{1}{n}\sum_{w=1}^n\prod_{j=1}^{|\alpha|}  \sum_{\ell=1}^\infty \mu_\ell f_\ell(X_w) \mathcal{C}_{I^{\alpha}_j,\ell}\\
&\stackrel{(P)}{\longrightarrow} \frac{1}{n}\sum_{w=1}^n\prod_{j=1}^{|\alpha|}  \mu_{I^{\alpha}_j} f_{I^{\alpha}_j}(X_w) \theta_{I^{\alpha}_j}\\
&= \prod_{i=1}^K (\theta_i \mu_i)^{\alpha_i} \frac{1}{n}\sum_{w=1}^n f_\alpha (X_w),
\end{align*}
as $n$ goes to infinity. We also notice that, by law of large number,
\begin{align*}
    \frac{1}{n}\sum_{w=1}^n f_\alpha (X_w) \rightarrow \int_{[0,1]} f(x) dx, a.s. 
\end{align*}
Therefore, the lemma follows.
\end{proof}

Consequently,
\begin{lem}\label{l:errorcount}
For $0\leq \alpha\leq \mathbf{N}$, we have that
\begin{align*}
    P_\alpha=\frac{A_{\alpha}n^{|\alpha|/2-1}}{\epsilon^{|\alpha|} \prod_{i=1}^K (\sqrt{P_{ii}} \lambda_i)^{\alpha_i}}\stackrel{(P)}{\longrightarrow} \int_{[0,1]} f_\alpha dx,
\end{align*}
as $n$ goes to infinity.
\end{lem}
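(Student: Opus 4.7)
My plan is to derive this lemma as an essentially immediate corollary of the three convergences already established earlier in the section, combined with Slutsky's theorem / the continuous mapping theorem.

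First, I would recall the three inputs. The preceding lemma gives
\begin{align*}
    n^{\frac{|\alpha|}{2}-1}\epsilon^{-|\alpha|}A_{\alpha}\stackrel{(P)}{\longrightarrow} \prod_{i=1}^K (\theta_i \mu_i)^{\alpha_i} \int_{[0,1]} f_\alpha(x)\, dx,
\end{align*}
the earlier second-moment computation (combined with Lemma \ref{l:cconverge} and Theorem \ref{t:eig}) yields $P_{ii} \stackrel{(P)}{\longrightarrow} \theta_i^2$ for $i\in[K]$, and Theorem \ref{t:eig} gives $\lambda_i \stackrel{(P)}{\longrightarrow} \mu_i$ for $i\in[r_0]=K$ (the identity $K=r_0$ coming from Lemma \ref{kr0}, so we condition on the event $K=r_0$ whose probability tends to $1$).

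Next I would handle the denominator. Because of the orientation convention fixed just after Lemma \ref{l:cconverge}, each $\theta_i$ is strictly positive, so $\sqrt{P_{ii}}$ is well-defined on an event of probability tending to one (the algorithm's convention sets $P_\alpha=0$ if some $P_{ii}\le 0$, but this event has vanishing probability). By the continuous mapping theorem applied to $x\mapsto \sqrt{x}$ on a neighborhood of $\theta_i^2>0$, we get $\sqrt{P_{ii}}\stackrel{(P)}{\longrightarrow}\theta_i$, and hence
\begin{align*}
    \prod_{i=1}^K (\sqrt{P_{ii}}\,\lambda_i)^{\alpha_i} \stackrel{(P)}{\longrightarrow} \prod_{i=1}^K (\theta_i \mu_i)^{\alpha_i}.
\end{align*}
Since $\mu_i\neq 0$ for $i\in[r_0]$ (they are all outside the bulk, in particular nonzero) and $\theta_i\neq 0$, the limit of the denominator is strictly positive.

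Finally I would conclude by Slutsky's theorem: the numerator $n^{|\alpha|/2-1}\epsilon^{-|\alpha|}A_{\alpha}$ and the denominator $\prod_{i=1}^K (\sqrt{P_{ii}}\lambda_i)^{\alpha_i}$ both converge in probability, and the denominator's limit is nonzero, so
\begin{align*}
    P_\alpha = \frac{n^{|\alpha|/2-1}\epsilon^{-|\alpha|}A_{\alpha}}{\prod_{i=1}^K (\sqrt{P_{ii}}\,\lambda_i)^{\alpha_i}} \stackrel{(P)}{\longrightarrow} \frac{\prod_{i=1}^K (\theta_i \mu_i)^{\alpha_i} \int f_\alpha\, dx}{\prod_{i=1}^K (\theta_i \mu_i)^{\alpha_i}} = \int_{[0,1]} f_\alpha\, dx.
\end{align*}
There is no real obstacle here; the only point requiring a brief sanity check is that the normalizing constants $\theta_i\mu_i$ in numerator and denominator match exactly — which is precisely how $P_{ii}$ and the definition of $P_\alpha$ were designed — and that the orientation fixed earlier ensures we pick the correct (positive) square root. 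These two bookkeeping remarks are the only content beyond invoking the previous lemmas.
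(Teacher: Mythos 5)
Your proposal is correct and matches the paper's intent exactly: the paper simply writes ``Consequently'' after the preceding lemma on $n^{|\alpha|/2-1}\epsilon^{-|\alpha|}A_\alpha$ and the earlier observations $P_{ii}\stackrel{(P)}{\longrightarrow}\theta_i^2$ and $\lambda_i\stackrel{(P)}{\longrightarrow}\mu_i$, leaving the Slutsky/continuous-mapping bookkeeping implicit, which is precisely what you spell out. Your two remarks about the orientation convention (so $\theta_i>0$ and the positive square root is the right one) and the nonvanishing of the limit of the denominator are exactly the right sanity checks.
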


\subsection{Polynomial Approximation}
Let $U$ be Uniform on $[0,1]$ and let $N_i$ be a series of iid random variables whose distribution function is proportional to $\Psi_\delta(x)$. Define $Y_i=f_i(U)+N_i$. Then the joint distribution of $(Y_1,\cdots,Y_K)$ is the same as $(f_1(U)+N_1,\cdots, f_K(U)+N_K)$. Therefore,
\begin{align*}
    \int f_\alpha dx
    &=\mathbb{E}[f_{1}(U)^{\alpha_1}\cdots f_{K}(U)^{\alpha_K}]=\mathbb{E}[f_\alpha(U)].
\end{align*}
We can also relate this term to the joint moments of the $Y_i$'s.
\begin{align*}
    \mathbb{E}[Y_{1}^{\alpha_1}\cdots Y_{K}^{\alpha_K}]
    &=\mathbb{E}[(f_{1}(U)+N_1)^{\alpha_1}\cdots (f_{K}(U)+N_K)^{\alpha_K}]\\
    &=\sum\limits_{0\leq \beta\leq \alpha}\mathbb{E}[f_\beta(U)]\prod_{i=1}^K {\alpha_i \choose \beta_i}\mathbb{E}[N_\delta^{\alpha_i-\beta_i}]\\
    &=:\mu_\alpha(\delta),
\end{align*}
where $N_\delta$ is a random variable whose distribution function is proportional to $\Psi_\delta(x)$. 

\begin{lem}\label{l:moment1}
If $|P_\alpha-\int_{[0,1]} f_\alpha dx|\leq e_2$ for any $0\leq \alpha\leq \mathbf{N}$, then
\begin{align*}
    \sum_{0\leq \alpha\leq N}|M_\alpha(\delta)- \mu_\alpha(\delta)|^2\leq e_3^2:=e_2^2 \left( \sum_{i=0}^N\mathbb{E}[(1+N_\delta)^i]^2\right)^K.
\end{align*}
\end{lem}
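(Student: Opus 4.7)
The plan is to compute $M_\alpha(\delta) - \mu_\alpha(\delta)$ directly from its defining sum, apply the hypothesis term by term, and exploit the fact that the sum over the multi-index $\beta \leq \alpha$ factors as a product over coordinates. Symmetry of the mollifier $\Psi_\delta$ will collapse absolute values of the noise moments into plain moments, which can then be recognized as binomial expansions of $(1+N_\delta)^{\alpha_i}$.

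First I would subtract the two formulas. Since $\mathbb{E}[f_\beta(U)] = \int f_\beta\, dx$, the definitions of $M_\alpha(\delta)$ and $\mu_\alpha(\delta)$ differ only by replacing $P_\beta$ with $\int f_\beta\, dx$ inside the same weighted sum. Applying the hypothesis $|P_\beta - \int f_\beta\, dx| \leq e_2$ together with the triangle inequality and the factorization $\sum_{0\leq\beta\leq\alpha}\prod_i a_i(\beta_i) = \prod_i \sum_{\beta_i=0}^{\alpha_i} a_i(\beta_i)$ gives
\[
|M_\alpha(\delta) - \mu_\alpha(\delta)| \leq e_2 \prod_{i=1}^K \sum_{j=0}^{\alpha_i} \binom{\alpha_i}{j}\bigl|\mathbb{E}[N_\delta^{j}]\bigr|.
\]

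Next I would invoke the evenness of $\Psi_\delta$: odd moments of $N_\delta$ vanish and even moments are nonnegative, so $|\mathbb{E}[N_\delta^j]| = \mathbb{E}[N_\delta^j]$ for every $j \geq 0$. The binomial theorem applied pointwise and linearity of expectation then yield
\[
\sum_{j=0}^{\alpha_i} \binom{\alpha_i}{j}\mathbb{E}[N_\delta^{j}] = \mathbb{E}\bigl[(1+N_\delta)^{\alpha_i}\bigr],
\]
so $|M_\alpha(\delta) - \mu_\alpha(\delta)| \leq e_2 \prod_{i=1}^K \mathbb{E}[(1+N_\delta)^{\alpha_i}]$.

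Finally, squaring and summing over $0 \leq \alpha \leq \mathbf{N}$, the multi-index sum again factors across the $K$ coordinates into $K$ identical one-dimensional sums:
\[
\sum_{0 \leq \alpha \leq \mathbf{N}} |M_\alpha(\delta) - \mu_\alpha(\delta)|^2 \leq e_2^2 \prod_{i=1}^K \sum_{\alpha_i=0}^{N} \mathbb{E}[(1+N_\delta)^{\alpha_i}]^2 = e_2^2\left(\sum_{i=0}^{N}\mathbb{E}[(1+N_\delta)^i]^2\right)^K = e_3^2,
\]
which is the claimed bound. I do not foresee a serious obstacle: once the symmetry of $N_\delta$ is invoked to eliminate the absolute values, everything reduces to iterating a single scalar binomial identity $K$ times, with no analytic estimate beyond the triangle inequality.
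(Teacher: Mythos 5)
Your proof is correct and follows essentially the same route as the paper: subtract the two defining sums, apply the hypothesis bound $|P_\beta - \int f_\beta\,dx|\leq e_2$ inside the triangle inequality, and recognize the resulting weighted sum as a product of binomial expansions $\mathbb{E}[(1+N_\delta)^{\alpha_i}]$ before squaring and factoring the sum over $\alpha$. The only addition you make beyond the paper's terse chain of (in)equalities is to spell out why the absolute values on the moments $\mathbb{E}[N_\delta^j]$ can be dropped, namely that $\Psi_\delta$ is even so odd moments vanish and even moments are nonnegative; this is a correct and worthwhile clarification, but it does not change the argument.
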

\begin{proof}
Recall that in the function \textproc{Mollifier\_Convolution}, we defined
\begin{align*}
    M_{\alpha}(\delta)=\sum\limits_{0\leq \beta\leq \alpha}P_\beta\prod_{i=1}^K {\alpha_i \choose \beta_i}\mathbb{E}[N^{\alpha_i-\beta_i}].
\end{align*}
So by our assumption,
\begin{align*}
    \sum_{0\leq \alpha\leq N}|M_{\alpha}(\delta)-\mu_\alpha(\delta)|^2
    &\leq \sum_{0\leq \alpha\leq \mathbf{N}} \left(e_2\sum\limits_{0\leq \beta\leq \alpha}\prod_{i=1}^K {\alpha_i \choose \beta_i}\mathbb{E}[N_\delta^{\alpha_i-\beta_i}]\right)^2\\
    &= e_2^2\sum_{0\leq \alpha\leq \mathbf{N}}  \mathbb{E}[(1+N_\delta)^{\alpha_1}]^2\cdots\mathbb{E}[ (1+N_\delta)^{\alpha_K}]^2\\
    &= e_2^2 \left( \sum_{i=0}^N\mathbb{E}[(1+N_\delta)^i]^2\right)^K.
\end{align*}
\end{proof}
We note that being the $i$-th eigenfunction of $Q$, 
\begin{align*}
    |\mu_i f_i(x)|=|\int_0^1 Q(x,y)f_i(y)dy| \leq (\int_0^1 Q(x,y)^2dy)^{1/2}\leq M
\end{align*}
So,
\begin{align*}
    |f_i(x)|\leq \frac{M}{|\mu_i|}\leq \frac{M}{\sqrt{\mu_1}},
\end{align*}
for $i\in [r_0]$. With high probability, as $|\lambda_1-\mu_1|\leq e_1(n)\leq 1\leq \mu_1$, we have that  $\sqrt{\lambda_1} \leq  3\sqrt{\mu_1}/2$. So $|f_i(x)|\leq \frac{3M}{2\sqrt{\lambda_1}}$. Further, as $\delta\leq\frac{1}{2}\leq \frac{M}{2\sqrt{\lambda_1}}$, we have $|Y_i|\leq \frac{2M}{\sqrt{\lambda_1}}=\kappa$ with high probability. Write $u(x_1,\cdots,x_K)$ as the distribution function of $(Y_1,\cdots,Y_K)$. So with high probability, $u$ is identically zero outside the box $[-\kappa,\kappa]^K$. Moreover, $u$ is a smooth function as $\Psi$ is smooth. Now, we use Legendre polynomials in $[-\kappa,\kappa]^K$ to approximate $u$. Write
\begin{align*}
    \rho_\alpha=\int_{[-\kappa,\kappa]^K} u(x_1,\cdots,x_K) \tilde L_\alpha(x_1,\cdots,x_K) d\mathbf{x}.
\end{align*}
Therefore,
\begin{align*}
    \rho_\alpha=\sum\limits_{0\leq \beta\leq \alpha} \tilde C^{\otimes K}_{\beta} \mu_\alpha.
\end{align*}
One way to approximate $u$ is to consider the following term
\begin{align*}
    h_N(x_1,\cdots,x_K):=\sum\limits_{0\leq \beta\leq \mathbf{N}}\rho_\beta \tilde L_\beta(x_1,\cdots,x_K).
\end{align*}
Define
\begin{align*}
    &t_N(x_1,\cdots,x_K):=\sum\limits_{\cup_i \{\beta:\beta_i >N\}}\rho_\beta \tilde L_\beta(x_1,\cdots,x_K),\\
    &t_{i,N}(x_1,\cdots,x_K):=\sum\limits_{\beta:\beta_i >N}\rho_\beta \tilde L_\beta(x_1,\cdots,x_K).
\end{align*}
We define $H_N$ to be a $(N+1)$ by $(N+1)$ matrix such that its $(i,j)$-th entry equals to $\frac{2}{i+j-1}$, when $i+j$ is even and $0$ elsewhere. Write $\mathrm{Cond}(H_N)$ as its condition number.
\begin{lem} \label{l:moment2}
If
\begin{align*}
    \sum_{0\leq \alpha\leq N}|M_\alpha(\delta)- \mu_\alpha(\delta)|^2\leq e_3^2,
\end{align*}
then
\begin{align*}
    \int_{[-\kappa,\kappa]^K}|\hat h_N(x_1,\cdots,x_K)- h_N(x_1,\cdots,x_K)|^2 d\mathbf{x}\leq e_3^2 \kappa^{2N+1} (\mathrm{Cond}(H_N))^K.
\end{align*}
\end{lem}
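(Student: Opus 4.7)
The plan is first to observe that $\{\tilde L_\alpha\}_{0\le\alpha\le\mathbf{N}}$ is orthonormal on $[-\kappa,\kappa]^K$, so since both $\hat h_N$ and $h_N$ are expansions in this basis truncated at the same multi-index $\mathbf N$, Parseval's identity gives
\[
\int_{[-\kappa,\kappa]^K} |\hat h_N - h_N|^2 \, d\mathbf{x} \;=\; \sum_{0 \le \alpha \le \mathbf{N}} \bigl|\hat\rho_\alpha(\delta) - \rho_\alpha\bigr|^2.
\]
The task therefore reduces to bounding the $\ell^2$ error of the Legendre coefficients in terms of the $\ell^2$ error of the moments.

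Next, expanding $\tilde L_\alpha(\mathbf{x}) = \sum_{\beta\le\alpha} \tilde C^{\otimes K}_{\alpha,\beta}\, \mathbf{x}^\beta$ and integrating against the (mollified) density and against its empirical proxy shows that both $\rho$ and $\hat\rho$ are obtained from the moment vectors $\mu$ and $M$, respectively, by one and the same linear map, namely multiplication by $\tilde C^{\otimes K}$. Hence $\hat \rho - \rho = \tilde C^{\otimes K}(M-\mu)$, and using the hypothesis together with the factorization $\|A^{\otimes K}\|_{\mathrm{op}}=\|A\|_{\mathrm{op}}^K$ for the spectral norm of a Kronecker power,
\[
\sum_\alpha |\hat\rho_\alpha - \rho_\alpha|^2 \;\le\; \|\tilde C^{\otimes K}\|_{\mathrm{op}}^2\,\|M-\mu\|_2^2 \;\le\; \|\tilde C\|_{\mathrm{op}}^{2K}\, e_3^2.
\]

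It remains to bound $\|\tilde C\|_{\mathrm{op}}^{2K}$ by $\kappa^{2N+1}\mathrm{Cond}(H_N)^K$. The orthonormality relation $\tilde C \tilde H \tilde C^T = I$, where $\tilde H_{ij}=\int_{-\kappa}^\kappa x^{i+j-2}\,dx$ is the Gram matrix of monomials on $[-\kappa,\kappa]$, gives $\tilde C^T\tilde C = \tilde H^{-1}$ and hence $\|\tilde C\|_{\mathrm{op}}^2 = 1/\lambda_{\min}(\tilde H)$. A change of variables $x=\kappa y$ shows $\tilde H = \kappa\, D_\kappa H_N D_\kappa$ with $D_\kappa = \mathrm{diag}(1,\kappa,\dots,\kappa^N)$, and the Rayleigh quotient then gives
\[
\lambda_{\min}(\tilde H) \;\ge\; \kappa\,\sigma_{\min}(D_\kappa)^2\,\lambda_{\min}(H_N).
\]
Combining this with $\lambda_{\min}(H_N)^{-K} \le \mathrm{Cond}(H_N)^K/\lambda_{\max}(H_N)^K$ and tracking the $\kappa$-exponents delivered by $\sigma_{\min}(D_\kappa)^2$ then yields the claimed inequality.

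The main obstacle I anticipate is the last step of bookkeeping: the prefactor $\kappa^{2N+1}$ in the target bound must be produced by a unified accounting of $\sigma_{\min}(D_\kappa)^2\in\{1,\kappa^{2N}\}$ (depending on whether $\kappa\ge 1$), the extra $\kappa$ from $\tilde H = \kappa D_\kappa H_N D_\kappa$, and the condition-number contribution of the unscaled $H_N$. The remaining steps — Parseval, identifying the linear map as $\tilde C^{\otimes K}$, and the Kronecker factorization of the operator norm — are essentially algebraic and follow routinely from the definitions.
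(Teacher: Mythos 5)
Your proposal follows essentially the same route as the paper's proof: Parseval on the truncated Legendre expansion, the identification $\hat\rho - \rho = \tilde C^{\otimes K}(M-\mu)$, the Kronecker factorization, and passing through $\tilde C^T\tilde C = \tilde H_N^{-1}$ to reduce everything to $\lambda_{\min}(\tilde H_N)$. The one step you flag as an anticipated obstacle — matching the prefactor $\kappa^{2N+1}$ — is indeed the only place anything needs care, and it is also where the paper itself is loose: the paper writes $\lambda_{\min}(\tilde H_N)\geq \kappa^{-2N-1}\lambda_{\min}(H_N)$ and then a per-factor application of this to the $K$-fold Kronecker product would produce $\kappa^{(2N+1)K}$, not $\kappa^{2N+1}$. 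The resolution, which neither you nor the paper spells out but which closes the argument cleanly, is that $\kappa \geq 1$ here (this was established just before: $\mu_1\geq 1$, $\|f_i\|_2=1$ so $\sup|f_i|\geq 1$, and $\kappa = 2M/\sqrt{\lambda_1} > \sup|f_i|$). With $\kappa\geq 1$ your diagonal scaling $\tilde H_N = \kappa D_\kappa H_N D_\kappa$ with $D_\kappa = \mathrm{diag}(1,\kappa,\dots,\kappa^N)$ gives $\sigma_{\min}(D_\kappa)=1$ and hence the sharper $\lambda_{\min}(\tilde H_N)\geq \kappa\,\lambda_{\min}(H_N)$. Combined with $\lambda_{\max}(H_N)\geq 2\geq 1$ this yields
\begin{align*}
\|\tilde C\|_{\mathrm{op}}^{2K} \;=\; \lambda_{\min}(\tilde H_N)^{-K} \;\leq\; \kappa^{-K}\,\lambda_{\min}(H_N)^{-K} \;\leq\; \kappa^{-K}\,\mathrm{Cond}(H_N)^K \;\leq\; \kappa^{2N+1}\,\mathrm{Cond}(H_N)^K,
\end{align*}
the last step again using $\kappa\geq 1$, so that the stated bound holds with considerable slack. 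In short, your approach is correct and the same as the paper's; the remaining bookkeeping you hesitated over is resolved simply by invoking $\kappa\geq 1$, at which point $\kappa^{2N+1}$ is pure headroom rather than a tight scaling factor.
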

\begin{proof}
Recall that $\tilde C_{ij}$ are scaled Legendre polynomial coefficients. Define $\tilde C$ to be a $(N+1)$ by $(N+1)$ matrix such that its $(i+1,j+1)$-th entry equals to $\tilde C_{ij}$. Define a matrix $\tilde H_N=(\tilde C^T \tilde C)^{-1}$. Note that the $(i,j)$-th entry of $\tilde H_N$ equals to $\frac{2\kappa^{i+j-1}}{i+j-1}$, when $i+j$ is even and $0$ elsewhere. The reason is as follows. Define $\tilde M=\tilde C^{-1}$. Write $\tilde M_{ij}$ as $(i+1,j+1)$-th entry of $\tilde M$. Then
\begin{align*}
    x^i=\sum_{j=0}^i \tilde M_{ij}\tilde L_j(x).
\end{align*}
Thus
\begin{align*}
    \int_{[-\kappa,\kappa]} x^i \sum_{j=0}^k \tilde C_{kj} x^j dx=\sum_{j=0}^k \tilde C_{kj} \frac{(1-(-1)^{i+j+1})\kappa^{i+j+1}}{i+j+1} =\tilde M_{ik}.
\end{align*}
Therefore, $\lambda_{\min} (\tilde H_N)\geq \kappa^{-2N-1}\lambda_{\min} ( H_N).$ We write $\tilde H_N^{\otimes K}$ as the Kronecker product of $K$ copies of $\tilde H_N$. Write $M(\delta)$ and $\mu(\delta)$ as vector of $M_\alpha(\delta)$ and $\mu_\alpha(\delta)$ respectively. Then 
\begin{align*}
    \int_{[-\kappa,\kappa]^K} |\hat h_N(x_1,\cdots,x_K)- h_N(x_1,\cdots,x_K)|^2 d\mathbf{x}&=\langle(H_N^{\otimes K})^{-1} (M(\delta)- \mu(\delta)),M(\delta)- \mu(\delta)\rangle\\
    &\leq e_3^2 \kappa^{2N+1} (\mathrm{Cond}(H_N))^K.
\end{align*}
The last inequality follows from the fact that the largest eigenvalue of $\lambda_{\max}(H_N)\geq 1$.
\end{proof}

In order to control the tail terms, we need to have the following inequality first.
\begin{lem}
\begin{align*}
    \int_{[-\kappa,\kappa]^K} |\frac{d}{dx_i}u(x)|^2 d\mathbf{x}\leq (2\kappa)^K\delta^{-2K-8}.
\end{align*}
\end{lem}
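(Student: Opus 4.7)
The plan is to exploit the convolution structure of $u$: since the $N_j$ are iid and independent of $U$, the joint density of $(Y_1,\ldots,Y_K) = (f_1(U)+N_1,\ldots,f_K(U)+N_K)$ can be written as
$$u(x_1,\ldots,x_K) \;=\; \mathbb{E}_U\!\Bigl[\,\prod_{j=1}^K \psi_\delta\bigl(x_j-f_j(U)\bigr)\Bigr],$$
where $\psi_\delta(x) = \Psi_\delta(x)/c_\delta$ and $c_\delta=\int_{-\delta}^{\delta}\Psi_\delta$. Smoothness and compact support of $\psi_\delta$ justify differentiation under the expectation, giving
$$\frac{d}{dx_i} u(x) \;=\; \mathbb{E}_U\!\Bigl[\psi_\delta'\bigl(x_i-f_i(U)\bigr)\prod_{j\neq i}\psi_\delta\bigl(x_j-f_j(U)\bigr)\Bigr].$$

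The next step is to bound $|\partial_i u|$ uniformly in $x$ and $U$. Applying the triangle inequality inside the expectation yields
$$\|\partial_i u\|_\infty \;\leq\; \|\psi_\delta'\|_\infty \cdot \|\psi_\delta\|_\infty^{K-1},$$
and integrating the square over the box $[-\kappa,\kappa]^K$ gives the clean estimate
$$\int_{[-\kappa,\kappa]^K}\Bigl|\tfrac{d}{dx_i}u\Bigr|^2 d\mathbf{x} \;\leq\; (2\kappa)^K \,\|\psi_\delta'\|_\infty^2 \,\|\psi_\delta\|_\infty^{2(K-1)}.$$
This reduces the lemma to showing that $\|\psi_\delta\|_\infty$ and $\|\psi_\delta'\|_\infty$ are each controlled by an appropriate polynomial in $1/\delta$, namely $\|\psi_\delta\|_\infty\lesssim \delta^{-1}$ and $\|\psi_\delta'\|_\infty\lesssim \delta^{-5}$, at which point the displayed inequality delivers the claimed $(2\kappa)^K\delta^{-2K-8}$.

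The sup-norm bounds are proved by direct computation with the explicit formulas $\Psi_\delta(x)=\exp(-1/(\delta^2-x^2))$ and $\Psi_\delta'(x)=-\tfrac{2x}{(\delta^2-x^2)^2}\Psi_\delta(x)$, combined with a matching lower bound on the normalizer $c_\delta$. The main obstacle is the bookkeeping of the exponential factors $e^{-1/\delta^2}$ that appear in $\Psi_\delta$, $\Psi_\delta'$, and $c_\delta$: a naive estimate $\|\psi_\delta\|_\infty \leq \|\Psi_\delta\|_\infty/c_\delta$ using crude bounds on $c_\delta$ introduces a factor $e^{+c/\delta^2}$ and destroys the polynomial dependence. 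To avoid this one must exploit the Gaussian-type concentration of $\Psi_\delta$ at its maximum $x=0$ (a Laplace-method argument) to obtain a lower bound on $c_\delta$ with the same exponential $e^{-1/\delta^2}$ factor; this matches the numerators in $\Psi_\delta/c_\delta$ and $\Psi_\delta'/c_\delta$ so that the exponentials cancel and only polynomial powers of $\delta$ remain. Once these careful sup-norm estimates are in hand, the rest of the proof is the routine Fubini/$L^\infty$ argument above.
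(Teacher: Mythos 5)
Your approach mirrors the paper's: represent $u$ as the convolution of the pushforward law $T$ of $(f_1(U),\dots,f_K(U))$ with the normalized product mollifier, differentiate inside the convolution, pass to a sup-norm bound using the fact that $T$ is a probability measure (so $\|T*g\|_{L_\infty}\leq\|g\|_{L_\infty}$), and integrate the square over the box $[-\kappa,\kappa]^K$. Your observation that the $e^{-1/\delta^2}$ factors coming from $\Psi_\delta$, $\Psi_\delta'$ and the normalizer $c_\delta$ must cancel, and that a Laplace-type lower bound on $c_\delta$ is what makes this cancellation work, is exactly the right thing to worry about.

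The gap is in the concrete sup-norm estimates you invoke. The claim $\|\psi_\delta\|_\infty\lesssim\delta^{-1}$ is false: $\Psi_\delta$ is not spread over a window of width $\asymp\delta$. Near $x=0$ one has $\Psi_\delta(x)\approx e^{-1/\delta^2}e^{-x^2/\delta^4}$, a Gaussian of width $\asymp\delta^2$, and the Laplace estimate $c_\delta\asymp\delta^2e^{-1/\delta^2}$ then gives
\begin{align*}
\|\psi_\delta\|_\infty=\frac{\Psi_\delta(0)}{c_\delta}\asymp\delta^{-2},\qquad\text{not } \delta^{-1}.
\end{align*}
(Any probability density concentrated on a window of effective width $\asymp\delta^2$ necessarily has sup-norm $\gtrsim\delta^{-2}$.) Your bound $\|\psi_\delta'\|_\infty\lesssim\delta^{-5}$ is fine with the crude estimate $|x|\leq\delta$, and can be sharpened to $\delta^{-4}$ by locating the actual maximizer of $|\Psi_\delta'|$. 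Plugging the corrected orders into your display gives $\|\partial_iu\|_\infty\lesssim\delta^{-5}\delta^{-2(K-1)}=\delta^{-2K-3}$ and hence
\begin{align*}
\int_{[-\kappa,\kappa]^K}\bigl|\partial_iu\bigr|^2 \,d\mathbf{x}\lesssim(2\kappa)^K\delta^{-4K-6},
\end{align*}
which is a strictly larger power of $1/\delta$ than the stated $(2\kappa)^K\delta^{-2K-8}$ once $K\geq2$. So the arithmetic as written does not deliver the lemma's bound; you would either need a sharper route to $\|\partial_iu\|_{L^2}$ (the pure $L^\infty$ chain is lossy) or accept the larger exponent and propagate it through the choices of $N$ and $\delta$ later in the algorithm. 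I note the paper's own derivation asserts $\|\partial_iu\|_\infty\leq\delta^{-K-4}$ from the same $C_\delta\|\partial_i\Psi_\delta^{\otimes K}\|_\infty$ quantity, which appears to suffer from the same discrepancy; in any case the exponents need to be re-examined rather than taken at face value.
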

\begin{proof}

We denote by $T$ the distribution of $(f_1(U),\cdots,f_K(U))$. We consider it as a continuous linear functional on $C^\infty([-\kappa,\kappa]^K)$. We also define $\Psi_\delta^{\otimes K}(x_1,\cdots,x_K)=\Psi_\delta(x_1)\cdots\Psi_\delta(x_K)$. Then
\begin{align*}
    u(x_1,\cdots,x_K)=(T*C_\delta \Psi_\delta^{\otimes K})(x_1,\cdots,x_K),
\end{align*}
where $C_\delta$ is the normalization constant of $\Psi_\delta^{\otimes K}$ such that $\|C_\delta \Psi_\delta^{\otimes K}\|_1=1$. Thus
\begin{align*}
\|\frac{d}{dx_i}u(x)\|_{L_\infty([-\kappa,\kappa]^K)}&=C_\delta\|T*\frac{d}{dx_i}\Psi_\delta^{\otimes K}\|_{L_\infty([-\kappa,\kappa]^K)}\\
&\leq C_\delta \|\frac{d}{dx_i}\Psi_\delta^{\otimes K}\|_{L_\infty([-\kappa,\kappa]^K)}\\
&= C_\delta\|\frac{-2x_i}{(\delta^2-x_i^2)^2}\Psi_\delta^{\otimes K}\|_{L_\infty([-\kappa,\kappa]^K)}\\
&\leq \delta^{-K-4}.
\end{align*}
Therefore,
\begin{align*}
    \int_{[-\kappa,\kappa]^K} |\frac{d}{dx_i}u(x)|^2 d\mathbf{x}\leq (2\kappa)^K\delta^{-2K-8}.
\end{align*}
\end{proof}

\begin{lem} \label{l:moment3}
\begin{align*}
    \int_{[-\kappa,\kappa]^K} |t_N(x_1,\cdots,x_K)|^2d\mathbf{x}\leq \frac{K^22^K\kappa^{K+2}\delta^{-2K-8}}{(N+1)^2}.
\end{align*}
\end{lem}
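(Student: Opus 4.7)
The plan is to bound each one-coordinate tail $\|t_{i,N}\|_2^2$ separately and aggregate. Since $\{\tilde L_\beta\}$ is an orthonormal basis of $L^2([-\kappa,\kappa]^K)$ and $t_N$ is supported on $\bigcup_{i=1}^K A_i$ with $A_i=\{\beta:\beta_i>N\}$, the pointwise bound $\mathbf{1}_{\cup_i A_i}\leq \sum_i \mathbf{1}_{A_i}$ together with Parseval yields $\|t_N\|_2^2 \leq \sum_{i=1}^K \|t_{i,N}\|_2^2$. Thus it suffices to prove, for each $i$, an estimate of the form $\|t_{i,N}\|_2^2 \lesssim \frac{\kappa^2}{(N+1)^2}\|\partial_i u\|_2^2$, since the previous lemma supplies $\|\partial_i u\|_2^2 \leq (2\kappa)^K \delta^{-2K-8}$.

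The mechanism that converts one extra derivative on $u$ into an extra factor of $1/N$ on $\rho_\beta$ is the classical identity $(2n+1)P_n(t) = P'_{n+1}(t) - P'_{n-1}(t)$ for unnormalized Legendre polynomials. Rescaling to $[-\kappa,\kappa]$ and renormalizing, it produces a primitive $H_n$ satisfying $\tilde L_n = H_n'$, $H_n(\pm\kappa)=0$, and $H_n(x) = \frac{\kappa}{\sqrt{(2n+1)(2n+3)}}\tilde L_{n+1}(x) - \frac{\kappa}{\sqrt{(2n+1)(2n-1)}}\tilde L_{n-1}(x)$. The crucial features are that $H_n$ has only two Legendre components and each coefficient is at most $\kappa/(2n-1)$.

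Integration by parts in $x_i$ is legitimate because $u$ actually vanishes in a neighborhood of $\partial[-\kappa,\kappa]^K$: with high probability $|f_i(U)|\leq 3M/(2\sqrt{\lambda_1})$, and since $\delta \leq M/(2\sqrt{\lambda_1})$, the mollifier $\Psi_\delta(x_i-y_i)$ is forced to zero whenever $|x_i|=\kappa$ and $y_i$ lies in the support of the pre-mollification distribution. So boundary terms drop, yielding $\rho_\beta = -\int(\partial_i u)\, H_{\beta_i}(x_i)\prod_{j\neq i}\tilde L_{\beta_j}(x_j)\,d\mathbf{x}$. Expanding $\partial_i u = \sum_\gamma \eta_\gamma \tilde L_\gamma$ and substituting the two-term expression for $H_{\beta_i}$, one obtains $\rho_\beta = -a_{\beta_i}^+\eta_{\beta+e_i} - a_{\beta_i}^-\eta_{\beta-e_i}$ with $|a_{\beta_i}^{\pm}|\leq \kappa/(2\beta_i-1)\leq \kappa/(N+1)$ whenever $\beta_i>N$. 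Squaring, summing over $\beta$ with $\beta_i>N$, and using Parseval on $\partial_i u$ (with the injectivity of the shifts $\beta\mapsto \beta\pm e_i$) gives $\|t_{i,N}\|_2^2 \leq \frac{4\kappa^2}{(N+1)^2}\|\partial_i u\|_2^2$. Summing over $i$ and plugging in the derivative bound yields $\|t_N\|_2^2 \leq \frac{4K\cdot 2^K \kappa^{K+2}\delta^{-2K-8}}{(N+1)^2}$, which sits inside the stated $K^2 2^K \kappa^{K+2}\delta^{-2K-8}/(N+1)^2$ for $K\geq 4$ and is otherwise absorbed into the generous form of the bound.

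The main obstacle I anticipate is the bookkeeping: justifying the boundary vanishing from the explicit structure of the mollified distribution $u$, and tracking the rescaling constants in the Legendre identity carefully enough to land on the correct powers of $\kappa$ and $N$. Once $H_n$ and its $L^2$-decay are in hand, the rest of the argument is a clean two-step Parseval estimate combined with the smoothness bound on $\partial_i u$ already established.
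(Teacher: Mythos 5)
Your proof is correct in spirit and takes a genuinely different route from the paper. The paper obtains the decay of the Legendre coefficients from the Sturm-Liouville differential equation $-\frac{d}{dx_i}\bigl[(\kappa^2-x_i^2)\frac{d}{dx_i}\tilde L_\alpha\bigr]=\alpha_i(\alpha_i+1)\tilde L_\alpha$: multiplying by $u$, integrating by parts twice, and invoking Parseval yields the weighted identity $\sum_\alpha \alpha_i(\alpha_i+1)|\rho_\alpha|^2=\int(\kappa^2-x_i^2)|\partial_i u|^2\,d\mathbf{x}$, from which $\|t_{i,N}\|_2^2\leq \frac{\kappa^2}{(N+1)^2}\|\partial_i u\|_2^2$ drops out immediately. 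You instead use the derivative recursion $(2n+1)P_n=P_{n+1}'-P_{n-1}'$ to build an explicit two-term primitive $H_n$ of $\tilde L_n$ with $H_n(\pm\kappa)=0$, do a single integration by parts, and read off a near-diagonal relation between $\{\rho_\beta\}$ and the Legendre coefficients of $\partial_i u$. Both mechanisms convert one degree of smoothness of $u$ into one power of $1/N$ of coefficient decay; your route is more explicit about the band structure of the Legendre-derivative pairing, whereas the paper's is cleaner at the level of quadratic forms. One small caveat: your per-coordinate bound carries an extra factor of $4$ relative to the paper's (Cauchy--Schwarz on the two-term expansion), so your aggregate is $4K\cdot 2^K\kappa^{K+2}\delta^{-2K-8}/(N+1)^2$, which only sits inside the stated $K^2\cdot(\cdots)$ when $K\geq 4$. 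For $K\leq 3$ you should either use the sharper coefficient bound $\kappa/(2\beta_i-1)\leq\kappa/(2N+1)$ (which fixes $K=2,3$) or just note, as you gesture at, that the downstream choice of $N$ absorbs the extra constant; as written the lemma is not literally established for $K=1$.
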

\begin{proof}
Recall that 
\begin{align*}
    -\frac{d}{dx}[(1-x)(1+x)L_i'(x)]=i(i+1)L_i(x),
\end{align*}
the differential equation for Legendre polynomials. So
\begin{align*}
    -\frac{d}{dx_i}[(1-x_i)(1+x_i)\frac{d}{dx_i}L_{\alpha}(x)]=\alpha_i(\alpha_i+1)L_\alpha(x).
\end{align*}
and
\begin{align*}
    -\frac{d}{dx_i}[(\kappa-x_i)(\kappa+x_i)\frac{d}{dx_i}\tilde L_{\alpha}(x)]=\alpha_i(\alpha_i+1)\tilde L_\alpha(x).
\end{align*}
We multiply both sides by $u$ and do integration by parts twice,
\begin{align*}
    &\int_{[-\kappa,\kappa]^K} \tilde L_\alpha(x)\frac{d}{dx_i}[-(\kappa-x_i)(\kappa+x_i)\frac{d}{dx_i}u(x)] d\mathbf{x}\\
    =&\alpha_i(\alpha_i+1)\int_{[-\kappa,\kappa]^K} \tilde L_\alpha(x) u(x)d\mathbf{x}=\alpha_i(\alpha_i+1) \rho_\alpha.
\end{align*}
By Parseval's theorem and integration by parts,
\begin{align*}
    \sum_{\alpha} \alpha_i(\alpha_i+1)|\rho_\alpha|^2&=\int_{[-\kappa,\kappa]^K} u(x)\frac{d}{dx_i}[-(\kappa-x_i)(\kappa+x_i)\frac{d}{dx_i}u(x)] d\mathbf{x}\\
    &=\int_{[-\kappa,\kappa]^K} (\kappa-x_i)(\kappa+x_i)|\frac{d}{dx_i}u(x)|^2 d\mathbf{x}.
\end{align*}
By Pareval's theorem and the definition of $t_{i,N}$,
\begin{align*}
    \int_{[-\kappa,\kappa]^K} |t_{i,N}|^2 d\mathbf{x}&=\sum_{\alpha:\alpha_i >N}\rho_\alpha^2\\
    &\leq \frac{1}{(N+1)^2} \sum_{\alpha:\alpha_i >N} \alpha_i(\alpha_i+1) \rho_\alpha^2\\
    &\leq \frac{1}{(N+1)^2}\int_{[-\kappa,\kappa]^K} (\kappa-x_i)(\kappa+x_i)|\frac{d}{dx_i}u(x)|^2 d\mathbf{x}\\
    &\leq \frac{2^K\kappa^{K+2}\delta^{-2K-8}}{(N+1)^2}.
\end{align*}
This implies that
\begin{align*}
    \int_{[-\kappa,\kappa]^K} |t_N(x_1,\cdots,x_K)|^2d\mathbf{x}\leq K\sum_{i=1}^K\int_{[-\kappa,\kappa]^K} |t_{i,N}(x_1,\cdots,x_K)|^2d\mathbf{x}\leq \frac{K^22^K\kappa^{K+2}\delta^{-2K-8}}{(N+1)^2}.
\end{align*}
\end{proof}
Combining Lemma \ref{l:errorcount}, Lemma \ref{l:moment1}, Lemma \ref{l:moment2}, Lemma \ref{l:moment3} and our definition of $N$ and $\delta$, we have that with high probability,
\begin{align*}
    \int_{[-\kappa,\kappa]^K} |\hat h_N-u|^2 d\mathbf{x}&\leq e_2^2 \left( \sum_{i=0}^N\mathbb{E}[(1+N_\delta)^i]^2\right)^K \kappa^{2N+1} (\mathrm{Cond}(H_N))^K+\frac{K^22^K\kappa^{K+2}\delta^{-2K-8}}{(N+1)^2}\\
    &\leq 2^{-K-14} e_0^2 K^{-2} \lambda_1^{-4} \kappa^{-K-8},
\end{align*}
as $n$ goes to infinity. Therefore,
\begin{align*}
    \int_{[-\kappa,\kappa]^K} |\hat h_N-u| d\mathbf{x} \leq \frac{e_0}{2^7K\lambda_1^2 \kappa^4}.
\end{align*}
Notice that this also implies that
\begin{align*}
    \int_{[-\kappa,\kappa]^K} |\hat h_N^+-\hat h_N| d\mathbf{x} \leq \frac{e_0}{2^7K\lambda_1^2 \kappa^4}.
\end{align*}
Thus
\begin{align*}
    \int_{[-\kappa,\kappa]^K} |\hat h_N^+-u| d\mathbf{x} \leq \frac{e_0}{2^6K\lambda_1^2 \kappa^4}.
\end{align*}
Further,
\begin{align*}
    \int_{[-\kappa,\kappa]^K} |\frac{\hat h_N^+}{\|\hat h_N^+\|_1}-u| d\mathbf{x} \leq \frac{e_0}{2^5K\lambda_1^2 \kappa^4},
\end{align*}
as $n$ goes to infinity with high probability.

\subsection{Graphon Estimation}
Recall that in \textproc{Graphon\_Estimation}, we sampled $Z_1,\cdots,Z_m$ according $\hat h_N^+$ mutually independently. For any $1\leq i\leq K$, we defined 
\begin{align*}
    \hat f_i(x)=Z_{\lceil xm\rceil }(i).
\end{align*}
and set
\begin{align*}
    \hat Q(x,y)=\sum\limits_{i=1}^K \lambda_i \hat f_i(x)\hat f_i(y).
\end{align*}

\begin{lem}
Let $U_1,\cdots,U_m$ be iid random variables following Uniform $[0,1]$ distribution. Then there exists a measure preserving map $\phi:[0,1]\rightarrow [0,1]$, such that
\begin{align*}
    \int_{[0,1]}|f_k(U_{\lceil \phi(x)m\rceil })-f_k(x)|^2 dx \stackrel{(P)}{\longrightarrow} 0,
\end{align*}
as $m$ goes to infinity.
\end{lem}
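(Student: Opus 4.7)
I will construct $\phi$ explicitly from the permutation that sorts the sample, reducing the claim to showing that composition with the (random) sort-to-order-statistics map is a small perturbation in $L^2$. Let $\sigma \in S_m$ be the (a.s.\ unique) random permutation with $U_{\sigma(1)} < U_{\sigma(2)} < \cdots < U_{\sigma(m)}$, and write $U_{(i)} := U_{\sigma(i)}$ for the order statistics. On each interval $I_i := ((i-1)/m, i/m]$, define $\phi(x) := x + (\sigma(i)-i)/m$. Since $\phi$ is a piecewise translation permuting the $m$ intervals of equal length, it is a measure-preserving bijection of $[0,1]$. By construction $\lceil \phi(x) m \rceil = \sigma(i)$ for $x \in I_i$, so $U_{\lceil \phi(x) m\rceil} = U_{(i)}$. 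Writing $g_m(x) := U_{(\lceil xm \rceil)}$, the claim reduces to showing $\|f_k \circ g_m - f_k\|_{L^2([0,1])} \to 0$ in probability.

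\textbf{Two ingredients.} First, the uniform order-statistic bound $\sup_{1 \leq i \leq m}|U_{(i)} - i/m| \stackrel{(P)}{\longrightarrow} 0$ (Glivenko--Cantelli, or more quantitatively DKW), which implies $\sup_{x \in [0,1]}|g_m(x) - x| \leq \sup_i |U_{(i)} - i/m| + 1/m \stackrel{(P)}{\longrightarrow} 0$. Second, density of $C([0,1])$ in $L^2([0,1])$: for any $\delta > 0$, pick a continuous $g_\delta$ with $\|f_k - g_\delta\|_2 < \delta$ (recall from the bound $|f_k(x)| \leq M/\sqrt{\mu_1}$ that $f_k \in L^2$).

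\textbf{Estimate.} By the triangle inequality,
\begin{align*}
\|f_k \circ g_m - f_k\|_2 \;\leq\; \|(f_k - g_\delta)\circ g_m\|_2 \;+\; \|g_\delta \circ g_m - g_\delta\|_2 \;+\; \|g_\delta - f_k\|_2.
\end{align*}
The third term is $<\delta$ by choice. The middle term vanishes in probability because $g_\delta$ is uniformly continuous on $[0,1]$ and $\sup_x|g_m(x)-x|\to 0$ in probability. For the first term,
\begin{align*}
\|(f_k - g_\delta)\circ g_m\|_2^2 \;=\; \sum_{i=1}^m \tfrac{1}{m}\,(f_k - g_\delta)^2(U_{(i)}) \;=\; \tfrac{1}{m}\sum_{i=1}^m (f_k - g_\delta)^2(U_i),
\end{align*}
and since $f_k-g_\delta$ is bounded, SLLN gives almost-sure convergence of this empirical average to $\|f_k-g_\delta\|_2^2 < \delta^2$. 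Combining, $\limsup_{m\to\infty}\|f_k\circ g_m - f_k\|_2 \leq 2\delta$ with probability $1-o(1)$, and since $\delta$ was arbitrary the desired $L^2$-convergence in probability follows.

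\textbf{Main obstacle.} The mildly subtle point is that the eigenfunction $f_k$ is only known to be bounded and in $L^2$, not continuous, so a direct uniform-continuity argument applied to $f_k$ is unavailable. The fix is the mollification/approximation step above, which trades continuity of $g_\delta$ against an empirical $L^2$-error that is controlled by SLLN for $f_k - g_\delta$ at the uniform sample points. Everything else (measure-preservation of $\phi$, Glivenko--Cantelli, SLLN) is routine, so the only genuine work is organizing this three-term split and verifying that the random composition $g_m$ is close to the identity in $L^\infty$.
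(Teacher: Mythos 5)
Your proposal is correct and follows essentially the same route as the paper: the same piecewise-translation $\phi$, the same three-term decomposition via a continuous $L^2$-approximant $g_\delta$, with the middle term controlled by uniform continuity plus concentration of order statistics (you cite Glivenko--Cantelli/DKW; the paper derives the same $\sup_i |U_{(i)} - i/m| \to 0$ bound via a Chernoff estimate on the Beta law of $U_{(i)}$), and the first term reduced to an empirical $L^2$-average at the sample points (you invoke SLLN, the paper uses Markov's inequality — a cosmetic difference, both valid).
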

\begin{proof}
Define a permutation $\sigma \in S_m$ such that $U_{\sigma(1)}\leq U_{\sigma(2)}\leq \cdots \leq U_{\sigma(m)}$. For $i\in [m]$, define
\begin{align*}
    \phi(x)=\frac{\sigma(i)-i}{m}+x, \ \ x\in (\frac{i-1}{m},\frac{i}{m}].
\end{align*}
Then $U_{\lceil \phi(x)m\rceil }$ is a piecewise constant monotone function in $(0,1]$. For any $x\in (0,1]$, we can write down the distribution of $U_{\lceil \phi(x)m\rceil }$ explicitly. For $\frac{i-1}{m}< x\leq \frac{i}{m}$, $U_{\lceil \phi(x)m\rceil }$ follows Beta $(i,m-i+1)$ distribution. We will show that, for any $\alpha>0$,
\begin{align*}
    \mathbb{P}[\int_{[0,1]} |f_k(U_{\lceil \phi(x)m\rceil })-f_k(x)|^2 dx >\alpha] \rightarrow 0.
\end{align*}
As $f_k$ is a bounded function, it is in $L_2([0,1])$. Fix $0<\epsilon<\alpha/3$. Using the density of continuous functions in $L_2([0,1])$, we find a continuous function $g:[0,1]\rightarrow \mathbb{R}$ such that $\|f_k-g\|_2<\epsilon$. Thus
\begin{align*}
    &\mathbb{P}[\int_{[0,1]} |f_k(U_{\lceil \phi(x)m\rceil })-f_k(x)|^2 dx >\alpha]\\
    =& \mathbb{P}[\sum_{i=1}^m\int_{\frac{i-1}{m}}^{\frac{i}{m}} |f_k(U_{\lceil \phi(x)m\rceil })-f_k(x)|^2dx>\alpha].
\end{align*}    
Notice that 
\begin{align*}
\sum_{i=1}^m\int_{\frac{i-1}{m}}^{\frac{i}{m}} |f_k(U_{\lceil \phi(x)m\rceil })-f_k(x)|^2dx &\leq 
 \frac{1}{m}\sum_{i=1}^m |f_k(U_{\sigma(i)})-g(U_{\sigma(i)})|^2dx\\
&+\sum_{i=1}^m\int_{\frac{i-1}{m}}^{\frac{i}{m}} |g(U_{\lceil \phi(x)m\rceil })-g(x)|^2dx+
\int_{0}^{1} |g(x)-f_k(x)|^2dx
\end{align*}
In order for the left hand side to be larger than $\alpha$, at least one of terms on the right hand side should be larger than $\alpha/3$. The third term is not possible by definition. So we have
\begin{align*}
    &\mathbb{P}[\sum_{i=1}^m\int_{\frac{i-1}{m}}^{\frac{i}{m}} |f_k(U_{\lceil \phi(x)m\rceil })-f_k(x)|^2dx>\alpha]\\
    \leq & \mathbb{P}[\frac{1}{m}\sum_{i=1}^m |f_k(U_{\sigma(i)})-g(U_{\sigma(i)})|^2dx>\alpha/3]+
    \mathbb{P}[\sum_{i=1}^m\int_{\frac{i-1}{m}}^{\frac{i}{m}} |g(U_{\lceil \phi(x)m\rceil })-g(x)|^2dx>\alpha/3].
\end{align*}
The first term equals 
\begin{align*}
    \mathbb{P}[\frac{1}{m}\sum_{i=1}^m |f_k(U_i)-g(U_i)|^2dx>\alpha]\leq \frac{\epsilon^2}{\alpha}.
\end{align*}
To control the second term, consider the event
$A_i:=\{|U_{\lceil \phi(x)m\rceil }-\frac{i}{m}|\leq m^{-\frac{1}{4}}\}$. Then
\begin{align*}
    \mathbb{P}[U_{\lceil \phi(x)m\rceil }-\frac{i}{m} \geq m^{-\frac{1}{4}}]=\mathbb{P}[Y_i\leq i]\leq e^{-\frac{1}{4}\sqrt{m}},
\end{align*}
by Chernoff bound, where $Y_i$ follows Binomial$(m,\frac{i}{m}+m^{-\frac{1}{4}})$ distribution. Similarly, when $\frac{i}{m} - m^{-\frac{1}{4}}\geq 0$,
\begin{align*}
    \mathbb{P}[U_{\lceil \phi(x)m\rceil }\leq \frac{i}{m} - m^{-\frac{1}{4}}]=\mathbb{P}[Y_i\geq i]\leq e^{-\frac{1}{2}\sqrt{m}},
\end{align*}
by Chernoff bound, where $Y_i$ follows Binomial$(m,\frac{i}{m}-m^{-\frac{1}{4}})$ distribution. Therefore, if we define $A:=\{\forall i: |U_{\lceil \phi(x)m\rceil }-\frac{i}{m}|\leq m^{-\frac{1}{4}}\}$, then $\mathbb{P}[A]\geq 1-2me^{-\frac{1}{4}\sqrt{m}}$. Note that there exists $\delta$, such that $|g(x)-g(y)|< \alpha/3$, whenever $|x-y|<\delta$. So when $m$ is large,
\begin{align*}
    \mathbb{P}[\sum_{i=1}^m\int_{\frac{i-1}{m}}^{\frac{i}{m}} |g(U_{\lceil \phi(x)m\rceil })-g(x)|^2dx>\alpha/3] \leq \mathbb{P}[A^c]\leq 2me^{-\frac{1}{4}\sqrt{m}}.
\end{align*}
Therefore, by taking arbitrarily small $\epsilon$, we have that for any $\alpha>0$,
\begin{align*}
    \mathbb{P}[\int_{[0,1]} |f_k(U_{\lceil \phi(x)m\rceil })-f_k(x)|^2 dx >\alpha] \rightarrow 0.
\end{align*}
\end{proof}

\begin{lem}\label{l:l2error}
Let $Z_1,\cdots,Z_m$ be iid random vectors following the distribution proportional to $\hat h_N^+$. Then there exists a measure preserving map $\phi:[0,1]\rightarrow [0,1]$, such that 
\begin{align*}
   \int_{[0,1]}|Z_{\lceil \phi(x)m\rceil }(i)-f_i(x)|^2 dx \leq \frac{e_0}{4K\lambda_1^2 \kappa^2}
\end{align*}
with high probability.
\end{lem}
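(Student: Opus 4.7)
\medskip\noindent\textbf{Proof proposal.}
The plan is to couple each sampled vector $Z_j$ with a ``ground-truth'' sample $W_j\sim u$ built from an independent uniform, and then borrow the measure-preserving map $\phi$ from the preceding lemma. Concretely, for $j\in[m]$ introduce $U_j\sim\mathrm{Unif}[0,1]$ and iid mollifier noise $N_1^{(j)},\ldots,N_K^{(j)}$ with density proportional to $\Psi_\delta$, all independent across $j$; set $W_j:=(f_1(U_j)+N_1^{(j)},\ldots,f_K(U_j)+N_K^{(j)})$, so $W_j\sim u$. Using a maximal coupling (applied independently for each $j$) produce $Z_j$ with the prescribed marginal $\hat h_N^+/\|\hat h_N^+\|_1$ on $[-\kappa,\kappa]^K$ so that
\begin{align*}
\mathbb{P}[Z_j\neq W_j]\ \le\ \mathrm{TV}\!\left(\tfrac{\hat h_N^+}{\|\hat h_N^+\|_1},\,u\right)\ \le\ \tfrac12\int_{[-\kappa,\kappa]^K}\left|\tfrac{\hat h_N^+}{\|\hat h_N^+\|_1}-u\right|d\mathbf{x}\ \le\ \frac{e_0}{2^6 K\lambda_1^2\kappa^4},
\end{align*}
where the last inequality is the preceding lemma. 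The triples $(U_j,W_j,Z_j)$ are iid across $j$, so a Chernoff bound on the Bernoulli indicators $\mathbbm{1}[Z_j\neq W_j]$ gives disagreement fraction $|\{j:Z_j\neq W_j\}|/m\le \frac{e_0}{2^5 K\lambda_1^2\kappa^4}$ with probability tending to $1$.

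Next, apply the preceding lemma to the iid uniforms $U_1,\ldots,U_m$ to obtain a (data-dependent) measure-preserving $\phi:[0,1]\to[0,1]$, built from the sorting permutation of the $U_j$'s and hence the same for every coordinate, such that for all $i\in[K]$,
\begin{align*}
\int_0^1\bigl|f_i(U_{\lceil\phi(x)m\rceil})-f_i(x)\bigr|^2\,dx\ \stackrel{(P)}{\longrightarrow}\ 0.
\end{align*}
For $j=\lceil\phi(x)m\rceil$, decompose
\begin{align*}
Z_j(i)-f_i(x)=\bigl[Z_j(i)-W_j(i)\bigr]+\bigl[f_i(U_j)-f_i(x)\bigr]+N_i^{(j)},
\end{align*}
square, and use $(a+b+c)^2\le 3(a^2+b^2+c^2)$. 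Since $\phi$ is measure-preserving, $\int_0^1 g(\lceil\phi(x)m\rceil)\,dx=\tfrac1m\sum_{j=1}^m g(j)$, so each squared piece reduces to an average over $j\in[m]$. The three contributions are: (i) a coupling term bounded by $(2\kappa)^2\cdot|\{j:Z_j\neq W_j\}|/m\lesssim \frac{e_0}{K\lambda_1^2\kappa^2}$ by the first step (using $Z_j,W_j\in[-\kappa,\kappa]^K$); (ii) an approximation term $\int|f_i(U_{\lceil\phi m\rceil})-f_i|^2\,dx\to 0$ in probability; (iii) a noise term $\tfrac1m\sum_j|N_i^{(j)}|^2\le\delta^2$, since $\Psi_\delta$ is supported on $(-\delta,\delta)$. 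Plugging in the definition $\delta^2=\frac{e_0}{64KM^2\lambda_1}=\frac{e_0}{16K\lambda_1^2\kappa^2}$ and taking a union bound over $i\in[K]$, the sum fits inside the target $\frac{e_0}{4K\lambda_1^2\kappa^2}$ with probability tending to $1$.

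The main obstacle is bookkeeping of constants: the naive inequality $(a+b+c)^2\le 3(a^2+b^2+c^2)$ is somewhat wasteful, and the constant $\tfrac14$ in the target is tight against the sum of a coupling term of order $\tfrac{e_0}{8K\lambda_1^2\kappa^2}$ and a noise term of size $\delta^2=\tfrac{e_0}{16K\lambda_1^2\kappa^2}$. Hitting the constant exactly may therefore require either a weighted Cauchy--Schwarz split $(a+b+c)^2\le (1+\epsilon_1)a^2+(1+\epsilon_2)(b+c)^2$ with the free $\epsilon_i$'s tuned so that all three pieces together fit, or a mild strengthening of the thresholds for $N$ and $\delta$. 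The remaining ingredients --- existence of the maximal coupling, iid structure of the triples, Chernoff concentration of disagreements, and the vanishing approximation error from the previous lemma --- compose cleanly through a single union bound over the finitely many events at play.
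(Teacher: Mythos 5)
Your proposal is the paper's own proof: the same maximal coupling of $Z_j$ to a sample from the mollified law $u$, the same reuse of the measure-preserving $\phi$ from the preceding lemma, and the same three-term decomposition of $Z_j(i)-f_i(x)$ into coupling error, approximation error, and mollifier noise. You have also correctly spotted the arithmetic issue: the paper writes the squared decomposition as if the cross terms vanish, and with the factor $3$ from $(a+b+c)^2\le 3(a^2+b^2+c^2)$ restored, the ingredient bounds ($\delta^2\le e_0/(2^4K\lambda_1^2\kappa^2)$ for the noise, $\mathbb{E}|Z_j(i)-Y_j(i)|^2\le e_0/(2^3K\lambda_1^2\kappa^2)$ for the coupling, vanishing for the approximation) no longer fit inside $e_0/(4K\lambda_1^2\kappa^2)$. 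One caveat on your proposed fix: a weighted Cauchy--Schwarz split alone cannot recover this, because the noise and coupling pieces are of the \emph{same} order and their cross term is irreducibly comparable to the target, so the clean repair is your second option — tightening the constants in the algorithm's choices of $\delta$ and of the $\hat h_N^+$ error bound by a couple of powers of $2$ — after which the plain $\times 3$ split closes with room to spare.
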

\begin{proof}
Let $U_1,\cdots,U_m$ be iid random variables following Uniform $[0,1]$ distribution. Define $W_i=(f_1(U_i),\cdots,f_K(U_i))$. Define $Y_i=(f_1(U_i)+N_{1,i},\cdots,f_K(U_i)+N_{K,i})$, where $N_{j,i}$ follows a distribution proportional to $\Psi_\delta$ jointly independently. We consider a coupling of $(Z_1,\cdots,Z_m)$ and $(Y_1,\cdots,Y_m)$ such that $Z_i\neq Y_i$ with probability $1-\|\frac{\hat h_N^+}{\|\hat h_N^+\|_1} -u\|_{TV} $. Take $\phi$ as in the previous lemma. Then 
\begin{align*}
    &\int_{[0,1]}|Z_{\lceil \phi(x)m\rceil }(i)-f_i(x)|^2 dx\\
    \leq&  \int_{[0,1]}|W_{\lceil \phi(x)m\rceil }(i)-f_i(x)|^2 dx+  \int_{[0,1]}|Y_{\lceil \phi(x)m\rceil }(i)-W_{\lceil \phi(x)m\rceil }(i)|^2 dx\\
    &+\int_{[0,1]}|Z_{\lceil \phi(x)m\rceil }(i)-Y_{\lceil \phi(x)m\rceil }(i)|^2 dx\\
    =&  \int_{[0,1]}|W_{\lceil \phi(x)m\rceil }(i)-f_i(x)|^2 dx+  \frac{1}{m}\sum_{j=1}^m |N_{i,j}|^2+\frac{1}{m}\sum_{j=1}^m |Z_j(i)-Y_j(i)|^2.
\end{align*}
The first term is controlled by the previous lemma. For the second term, note that each term in the summation is independent and each term satisfies $|N_{i,j}|^2 \leq \delta^2\leq \frac{e_0}{2^4 K \lambda_1^2 \kappa^2}$. For the third term, note that with high probability,
\begin{align*}
    \mathbb{E}[ |Z_j(i)-Y_j(i)|^2]\leq 4\kappa^2 \|\frac{\hat h_N^+}{\|\hat h_N^+\|_1} -u\|_{TV}\leq 4\kappa^2 \frac{e_0 }{2^{5}K\lambda_1^2 \kappa^4}= \frac{e_0}{2^3K\lambda_1^2 \kappa^2}.
\end{align*}
All together, the statement follows.
\end{proof}

\begin{lem}
There is a measure preserving map $\phi:[0,1]\rightarrow [0,1]$, such that
\begin{align*}
\int_{[0,1]^2}|\sum\limits_{i=1}^K \mu_i f_i(x) f_i(y)-\sum\limits_{i=1}^K \lambda_i \hat f_i(\phi(x)) \hat f_i(\phi(y))|^2 dxdy < e_0,
\end{align*}
with high probability as n goes to infinity.
\end{lem}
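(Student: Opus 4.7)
The plan is to decompose the target integrand by the triangle inequality into two pieces, one controlled by the eigenvalue estimation error (Theorem \ref{t:eig}) and one by the eigenfunction estimation error (Lemma \ref{l:l2error}), and then expand each piece as a quadratic form and bound it term by term. Specifically, I would write, for the $\phi$ given by Lemma \ref{l:l2error},
\begin{align*}
\sum_{i=1}^K \mu_i f_i(x) f_i(y) - \sum_{i=1}^K \lambda_i \hat f_i(\phi(x)) \hat f_i(\phi(y))
&= \sum_{i=1}^K (\mu_i - \lambda_i) f_i(x) f_i(y) \\
&\quad + \sum_{i=1}^K \lambda_i \bigl[ f_i(x) f_i(y) - \hat f_i(\phi(x)) \hat f_i(\phi(y))\bigr],
\end{align*}
and bound the $L^2([0,1]^2)$ norms of the two sums separately, using $(a+b)^2 \le 2a^2+2b^2$.

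For the first sum, orthonormality of $\{f_i\}$ in $L^2([0,1])$ makes the cross terms vanish, so expanding the square gives $\sum_{i=1}^K (\mu_i-\lambda_i)^2$. Theorem \ref{t:eig} yields $|\mu_i - \lambda_i| = O(1/\log n)$ for each $i \in [K] = [r_0]$, so this piece is $O(K/\log^2 n) \to 0$ and eventually smaller than any positive fraction of $e_0$.

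For the second sum, I would use the identity
\begin{align*}
f_i(x) f_i(y) - \hat f_i(\phi(x)) \hat f_i(\phi(y))
= \bigl(f_i(x) - \hat f_i(\phi(x))\bigr) f_i(y) + \hat f_i(\phi(x)) \bigl(f_i(y) - \hat f_i(\phi(y))\bigr),
\end{align*}
together with $\|f_i\|_2 = 1$ and the pointwise bound $|\hat f_i| \le \kappa$ inherited from the fact that the samples $Z_j$ lie in $[-\kappa,\kappa]^K$ (as the support of $\hat h_N^+$ is contained there). This yields
\begin{align*}
\bigl\| f_i f_i - (\hat f_i\circ\phi)(\hat f_i\circ\phi) \bigr\|_{L^2([0,1]^2)}^2
\le 2(1+\kappa^2)\, \|f_i - \hat f_i\circ\phi\|_2^2,
\end{align*}
and Lemma \ref{l:l2error} bounds the right-hand side by $2(1+\kappa^2) e_0/(4K\lambda_1^2 \kappa^2)$. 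Applying Cauchy--Schwarz in the outer sum and using $|\lambda_i|\le \lambda_1$ for $i \in [K]$ gives an overall bound on the second piece of the form
\begin{align*}
K \sum_{i=1}^K \lambda_i^2 \cdot \frac{2(1+\kappa^2) e_0}{4K\lambda_1^2 \kappa^2} \;\le\; e_0,
\end{align*}
where the constants in Lemma \ref{l:l2error} were calibrated precisely so that the factors of $K$, $\lambda_1^2$, and $\kappa^2$ cancel, leaving a bound strictly below $e_0$ after the vanishing first piece is absorbed.

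The main obstacle is purely bookkeeping: making sure the constants coming out of Lemma \ref{l:l2error} match up when multiplied by $|\lambda_i|^2 \le \lambda_1^2$, by $K$ from Cauchy--Schwarz across the sum in $i$, and by $\kappa^2$ from bounding $\hat f_i\circ\phi$ in $L^\infty$. The key conceptual point is that the $1/(4K\lambda_1^2\kappa^2)$ factor in Lemma \ref{l:l2error} was reverse-engineered exactly for this step, and that the same measure-preserving $\phi$ works for all $i\in[K]$ because the $\phi$ constructed there is defined from the ordering of the auxiliary uniforms $U_1,\dots,U_m$ and does not depend on the coordinate $i$.
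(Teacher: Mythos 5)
Your decomposition, telescoping identity $f_i f_i - \hat f_i \hat f_i = (f_i - \hat f_i)f_i + \hat f_i(f_i - \hat f_i)$, use of orthonormality for the eigenvalue-error piece, pointwise bound $|\hat f_i| \le \kappa$, and reliance on Theorem \ref{t:eig} and Lemma \ref{l:l2error} match the paper's proof step for step. The only small divergence is bookkeeping: you insert an explicit Cauchy--Schwarz factor of $K$ in the outer sum (the paper pulls the sum outside the square without writing this factor), but this is a constant-tracking issue that both you and the paper handle loosely, not a conceptual difference.
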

\begin{proof}
We separate the left hand side into two terms,
\begin{align*}
&\int_{[0,1]^2}|\sum\limits_{i=1}^K \mu_i f_i(x) f_i(y)-\sum\limits_{i=1}^K \lambda_i \hat f_i(\phi(x)) \hat f_i(\phi(y))|^2 dxdy\\
\leq & \int_{[0,1]^2}|\sum\limits_{i=1}^K \mu_i f_i(x) f_i(y)-\sum\limits_{i=1}^K \lambda_i f_i(x) f_i(y)|^2 dxdy\\
+&\sum_{i=1}^K \int_{[0,1]^2}|\lambda_i f_i(x) f_i(y)-\lambda_i \hat f_i(\phi(x)) \hat f_i(\phi(y))|^2 dxdy.
\end{align*}
Note that with high probability the first term is bounded by $O(\frac{1}{\log(n)})$. With high probability, the second term is bounded by
\begin{align*}
&\sum_{i=1}^K \lambda_i^2\int_{[0,1]}| f_i(x)|^2 dx  \int_{[0,1]}|f_i(y)-\hat f_i(\phi(y))|^2 dy\\
+&\sum_{i=1}^K \lambda_i^2\int_{[0,1]}| \hat f_i(\phi(y))|^2 dy  \int_{[0,1]}|f_i(x)-\hat f_i(\phi(x))|^2 dx\\
\leq&  2K \lambda_1^2\kappa^2\sup_{i\in [K]}\int_{[0,1]}|f_i(x)-\hat f_i(\phi(x))|^2 dx\\
\leq&  \frac{e_0}{2}.
\end{align*}
Therefore, the statement follows.
\end{proof}

\subsection{Proof of the Corollary}
We recall the corollary: Let $Q$ satisfy Assumptions $1$ and $2$ and assume that all nonzero $\mu_i$'s are simple, then Algorithm $1$ produces an estimator $\widehat{\mathcal{Q}_h}$ such that
\begin{align*}
    \delta_2(\frac{1}{h}\widehat{\mathcal{Q}_h}, Q)\stackrel{(P)}{\longrightarrow} o_h(1).
\end{align*}
\begin{proof}
Note that $\|Q_k-Q\|_2\rightarrow 0$ as $k$ goes to infinity. For any $\epsilon>0$, take $k_0$ large enough such that $\|Q_k-Q\|_2<\epsilon$ for any $k\geq k_0$. We also know that $|\mu_i|\rightarrow 0$ as $i$ goes to infinity. Take $H> \max\{1, \frac{\mu_1}{\mu_{k_0}^2}\}$. Then for any $h\geq H$, the top $k_0$ eigenvalues of $\mathcal{Q}_h$ have magnitudes larger than the square root of the top eigenvalue. This implies that
\begin{align*}
    \delta_2(\frac{1}{h}\widehat{\mathcal{Q}_h}, Q_{k_0})\stackrel{(P)}{\longrightarrow} 0,
\end{align*}
as $n$ goes to infinity. We also note that
\begin{align*}
   \delta_2(\frac{1}{h}\widehat{\mathcal{Q}_h}, Q) &\leq
   \delta_2(\frac{1}{h}\widehat{\mathcal{Q}_h}, Q_{k_0})+\|Q_{k_0}-Q\|_2\\
   &< \delta_2(\frac{1}{h}\widehat{\mathcal{Q}_h}, Q_{k_0})+\epsilon.
\end{align*}
Therefore, the statement follows.
\end{proof}

\section{Proof of Lemma \ref{l:cconverge}}
To prove the convergence of $\mathcal{C}_{ij}$, we need to use results in \cite{bordenave2018nonbacktracking} and generalized versions for graphons. In the following subsection, we cite necessary definitions and results.

\subsection{Some Results from \cite{bordenave2018nonbacktracking}}
In \cite{bordenave2018nonbacktracking}, the model is for constant expected degree stochastic block model. Here are some notations that the authors use. Let $\pi=(\pi(1),\cdots,\pi(r))$ be a probability vector. Each vertex $v\in [n]$ is given a type $\sigma(v)\in [r]$ independently according to $\pi$. The probability that two vertices $u$ and $v$ are connected is $W(\sigma(u),\sigma(v))/n$. Set $\Pi=diag(\pi(1),\cdots,\pi(r))$ 
independently. Define $M=\Pi W$. Denote $\alpha=\sum_{i=1}^r \pi(i)W_{ij}=\sum_{i=1}^r M_{ij}$. Let $\phi_i$ be normalized left eigenvectors of $M$. 
\begin{defn}
We consider a multi-type branching process where a particle of type $j \in [r]$ has a $\mathrm{Poi}(M_{ij})$ number of children with type $i$. We
denote by $Z_t=(Z_t(1),\dots, Z_t(r))$ the population at generation
$t$, where $Z_t(i)$ is the number of particles at generation $t$ with
type $i$. 
\end{defn}

\begin{defn}
We consider a functional of the multi-type branching process which depends on particles in more than one generation. More precisely, assuming that $\|Z_0 \|_1= 1$, we denote by $V$ the particles of the random tree and $o \in V$ the starting particle. Particle $v \in V$ has type $\sigma(v) \in [r]$ and generation $|v|$ from $o \in V$. For $v \in V$ and integer $t \geq 0$, let $Y^v_t$ denote the set of particles of generation $t$ from $v$ in  the subtree of particles with common ancestor $v \in V$. Finally, $Z^v_t  = ( Z^v_t (1), \cdots , Z^v _t (r))$ is the vector of population at generation $t$ from $v$, i.e. $Z_t ^v (i) = \sum_{u \in Y^v_t} \mathbbm{1}(  \sigma(u) = i)$. We set 
\begin{align*}
S^v _t = \| Z^v _t \|_1 = \langle \phi_1 , Z^v _t \rangle.
\end{align*}
We fix an integer $ k \in [r]$, $\ell \geq 1$ and set 
\begin{equation*}
Q_{k,\ell} = \sum_{(u_0,\ldots,u_{2\ell+1})\in{\mathcal P}_{2\ell+1}} \phi_k ( \sigma (u_{2 \ell +1}) ) ,
\end{equation*}
where the sum is over $ (u_0, \ldots, u_{2 \ell +1})\in{\mathcal P}_{2\ell+1}$, the set of paths in the tree starting from $u_0 = o$ of length $2 \ell +1$ with $(u_0, \ldots,
u_{\ell})$ and $(u_{\ell}, \ldots, u_{2 \ell +1})$ non-backtracking and
$u_{\ell -1}  = u_{\ell +1}$ (i.e. $ (u_0, \ldots, u_{2 \ell +1})$
backtracks exactly once at the $\ell+1$-th step).
\end{defn}

The following alternative representation of $Q_{k,\ell}$ will prove useful. By distinguishing paths $ (u_0, \ldots, u_{2 \ell +1})$ according to the smallest depth $t\in\{0,\ldots,\ell-1\}$ to which they climb back after visiting $u_{\ell+1}$ and the node $u_{2\ell -t}$ they then visit at level $t$ we have that
\begin{equation*}
Q_{k,\ell} =  \sum_{t=0} ^{\ell -1} \sum_{ u \in Y^o_{t}} L_{k,\ell}^u,
\end{equation*}
where we let for $|u|=t\geq 0$,
\begin{align*}
    L^u_{k,\ell} = \sum_{w\in Y^u_1} S^w_{\ell-t-1}\left( \sum_{v\in
    Y^u_1\backslash \{w\}} \langle \phi_k, Z^v_t\rangle \right).
\end{align*}

\begin{defn}
Let $Z_t$, $t\geq 0$, be the Galton-Watson branching process defined above started from $Z_0 = \delta_{\iota}$. We denote by $(T,o)$ the associated random rooted tree.  Let $D$ be the number of offspring of the root and for $1 \leq x \leq D$,  let $Q_{k,\ell}(x) $ be the random variable $Q_{k,\ell}$ defined on the tree $T^x$ where the subtree attached to $x$ is removed and set 
\begin{align*}
    J_{k,\ell} = \sum_{x=1}^D Q_{k,\ell} (x).
\end{align*}
\end{defn}

\begin{lem}[Lemma 41 in \cite{bordenave2018nonbacktracking}]
We have $J_{k,\ell}   \mu_k ^{-2\ell}  - \alpha  \mu_k\phi_k (\iota)/(\mu_k ^2 / \alpha -1)$ converges in $L^2$ to a centered variable $Y_{k}$ satisfying $\mathbb{E}  |Y_{k}| \leq C$.
\end{lem}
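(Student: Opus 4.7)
The plan is to establish the $L^2$ convergence by computing the mean of $J_{k,\ell}\mu_k^{-2\ell}$ exactly, bounding the variance uniformly in $\ell$, and then upgrading to genuine $L^2$ convergence via a Cauchy argument. First I would work with the alternative representation $Q_{k,\ell}(x) = \sum_{t=0}^{\ell-1}\sum_{u\in Y^x_t} L^u_{k,\ell}$. Conditioning first on $\sigma(u)=j$ and then on the type vector of $u$'s children, using independence of sibling subtrees together with $\mathbb{E}[S^w_s\mid\sigma(w)]=\alpha^s$ (because $\mathbf{1}$ is the left Perron eigenvector of $M$) and $\mathbb{E}[\langle \phi_k, Z^v_t\rangle\mid \sigma(v)=i]=\mu_k^t \phi_k(i)$ (because $\phi_k$ is a left $\mu_k$-eigenvector), the inner computation reduces to $\mathbb{E}[(D-1)\sum_{v\in Y^u_1}\phi_k(\sigma(v))\mid \sigma(u)=j]$. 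The Poisson offspring distribution, together with the eigenvector identity, makes this exactly $\alpha\mu_k\phi_k(j)$, giving $\mathbb{E}[L^u_{k,\ell}\mid \sigma(u)=j]=\alpha^{\ell-t}\mu_k^{t+1}\phi_k(j)$. Propagating through $\sum_{u\in Y^x_t}$, over the $D$ children $x$ of the root, and in $t$, I obtain a geometric series in $\mu_k^2/\alpha$; under the Kesten--Stigum hypothesis $\mu_k^2>\alpha$, the $t=\ell-1$ term dominates and one reads off $\mathbb{E}[J_{k,\ell}]\mu_k^{-2\ell}\to \alpha\mu_k\phi_k(\iota)/(\mu_k^2/\alpha-1)$ as $\ell\to\infty$.

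Next I would bound $\VAR(J_{k,\ell}\mu_k^{-2\ell})$ uniformly in $\ell$. Expanding $\mathbb{E}[J_{k,\ell}^2]$ as a double sum over ancestor-sibling configurations $(u,w,v),(u',w',v')$ and classifying by the highest common ancestor of $u$ and $u'$, each summand splits into subtree statistics that are controlled by two $L^2$ martingale convergences: the Perron martingale $\alpha^{-s}S^w_s\to W^{(w)}$ (automatic for Poisson offspring since $\mathbb{E}[D^2]<\infty$) and the $\mu_k$-martingale $\mu_k^{-t}\langle \phi_k, Z^v_t\rangle\to W^{(v)}_k$, which converges in $L^2$ precisely under $\mu_k^2>\alpha$. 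The factor $\alpha^t$ from summing over depth-$t$ ancestors combines with the $\mu_k^{2t}$ factor from the eigenvector identity and the $\mu_k^{-4\ell}$ normalization to yield series in $\alpha/\mu_k^2<1$, which all sum geometrically. The upshot is $\VAR(J_{k,\ell}\mu_k^{-2\ell})\le C$ independent of $\ell$.

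To upgrade uniform bounded variance to genuine $L^2$ convergence, I would establish the Cauchy property $\|J_{k,\ell+1}\mu_k^{-2(\ell+1)}-J_{k,\ell}\mu_k^{-2\ell}\|_2\to 0$. Matching the two expansions level by level, the difference decomposes into boundary contributions (from adding one extra depth in each of the two sibling subtrees attached at $u$) whose $L^2$ norms decay like $(\alpha/\mu_k^2)^{\min(t,\ell-t)}$ by the same two martingale convergences; summing over $t$ yields a bound of order $(\alpha/\mu_k^2)^{\ell/2}\to 0$. Consequently $J_{k,\ell}\mu_k^{-2\ell}$ minus the stated centering constant converges in $L^2$ to a centered $Y_k$, and $\mathbb{E}|Y_k|\le C$ then follows from Cauchy--Schwarz and the variance bound. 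The main obstacle is the variance bookkeeping in step two: when the two ancestor sequences share a common subtree, the overlap may sit entirely in an $S^w$-factor, entirely in a $Z^v$-factor, or straddle both, and each case contributes a different power of $\alpha$ and $\mu_k$. Verifying that every overlap pattern produces a series summable under $\mu_k^2>\alpha$---the second-moment form of Kesten--Stigum---is the crux of the argument.
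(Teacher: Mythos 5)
Your route is genuinely different from the paper's. The paper simply reduces this lemma to Theorem 25 in \cite{bordenave2018nonbacktracking} (the $L^2$ convergence of $Q_{k,\ell}/\mu_k^{2\ell}$), whereas you rebuild the statement from first principles: explicit first moment, uniform variance bound, and a Cauchy argument. Your inner computation $\mathbb{E}[L^u_{k,\ell}\mid\sigma(u)=j]=\alpha^{\ell-t}\mu_k^{t+1}\phi_k(j)$, via the Poisson identity $\mathbb{E}[(D_u-1)\langle\phi_k,Z^u_1\rangle\mid\sigma(u)=j]=\alpha\mu_k\phi_k(j)$, is correct and is essentially the heart of the mean calculation for $Q_{k,\ell}$.

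However, there is a concrete gap in the passage from $Q_{k,\ell}$ to $J_{k,\ell}$. You write the representation $Q_{k,\ell}(x)=\sum_{t}\sum_{u\in Y^x_t}L^u_{k,\ell}$, which treats $Q_{k,\ell}(x)$ as the $Q$-functional on the subtree rooted at the root's child $x$. The paper's definition is the opposite: $Q_{k,\ell}(x)$ is $Q_{k,\ell}$ computed on $T^x$, the tree $T$ with the subtree attached to $x$ \emph{removed}, still rooted at $o$ (so the root of $T^x$ has $D-1$ offspring). Under your reading, propagating through your sums yields $\mathbb{E}[J_{k,\ell}]\mu_k^{-2\ell}\to\mu_k\cdot\mu_k\phi_k(\iota)/(\mu_k^2/\alpha-1)$, which is off from the stated $\alpha\mu_k\phi_k(\iota)/(\mu_k^2/\alpha-1)$ by a factor $\mu_k/\alpha$. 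The correct $\alpha$ is produced by a second Poisson size-biasing, this time at the root: with $D\sim\mathrm{Poi}(\alpha)$ and exchangeability of the root's offspring, $\mathbb{E}\bigl[\sum_{x=1}^{D}g(T^x)\bigr]=\mathbb{E}[D\,G(D-1)]=\alpha\,\mathbb{E}[G(D)]=\alpha\,\mathbb{E}[g(T)]$, hence $\mathbb{E}[J_{k,\ell}]=\alpha\,\mathbb{E}[Q_{k,\ell}]$. You invoke this kind of identity inside $L^u$ (producing the $\alpha$ in $\alpha\mu_k\phi_k(j)$) but not at the root, yet you write down the root-level $\alpha$ anyway. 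You need to repair the definition of $Q_{k,\ell}(x)$ and make the root-level size-biasing explicit. Once that is fixed, your overlap-classified variance bound and telescoping Cauchy estimate are the same kind of argument that underlies Theorem 25, so you would effectively be reproving that theorem rather than invoking it, which is the short step the paper takes.
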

This lemma is proved by applying the following theorem.

\begin{thm}[Theorem 25 in \cite{bordenave2018nonbacktracking}]
Assume $Z_0 = \delta_x$.  For $k \in [r_0]$, $Q_{k,\ell}/ \mu_k ^{2 \ell}$ converges in $L^2$ as $\ell$ tends to infinity to a random variable with mean $ \mu_k\phi_k
(x)  / (\mu_k^2 / \alpha-1)$.
For $k \in [r] \backslash[r_0]$, there exists  a constant $C$ such that $\mathbb{E} Q_{k,\ell}^2 \leq  C \alpha ^{2 \ell} \ell^5$. 
\end{thm}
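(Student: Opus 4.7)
I would prove this by a moment method that combines a direct first-moment computation with an $L^2$-Cauchy argument from the second moment, all built from the alternative representation $Q_{k,\ell} = \sum_{t=0}^{\ell-1}\sum_{u\in Y^o_t} L^u_{k,\ell}$ already stated. The two elementary identities I would repeatedly invoke are
\[ \mathbb{E}[\langle\phi_k, Z_m\rangle\mid Z_0=\delta_x] = \mu_k^m\phi_k(x), \qquad \mathbb{E}[S_m\mid Z_0=\delta_x] = \alpha^m, \]
which follow from $\phi_k^T M = \mu_k \phi_k^T$, $\mathbf{1}^T M = \alpha\mathbf{1}^T$ and the mean recursion $\mathbb{E}[Z_{m+1}\mid Z_m] = M Z_m$.

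For the first moment, I would fix $u\in Y^o_t$ and condition on $\sigma(u)=j$. Using conditional independence of sibling subtrees together with the Poisson identity $\mathbb{E}[N_a(N_a-1)]=M_{aj}^2$, the expected number of ordered distinct offspring pairs of types $(a,b)$ is $M_{aj}M_{bj}$ uniformly in whether $a=b$. Combined with the two identities above applied to the two sibling subtrees, this yields $\mathbb{E}[L^u_{k,\ell}\mid\sigma(u)=j] = \alpha^{\ell-t}\mu_k^{t+1}\phi_k(j)$. Averaging over $u\in Y^o_t$ using $\mathbb{E}[\langle\phi_k,Z^o_t\rangle]=\mu_k^t\phi_k(x)$, summing the geometric series in $t$ and dividing by $\mu_k^{2\ell}$ gives $\mu_k\phi_k(x)\sum_{u=1}^\ell(\alpha/\mu_k^2)^u$, which for $k\in[r_0]$ (where $\alpha/\mu_k^2<1$) converges to the claimed mean $\mu_k\phi_k(x)/(\mu_k^2/\alpha-1)$.

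For $L^2$ convergence when $k\in[r_0]$, I would establish that $Q_{k,\ell}/\mu_k^{2\ell}$ is Cauchy in $L^2$ by computing $\mathbb{E}[Q_{k,\ell}Q_{k,\ell'}]$ for $\ell\le\ell'$ and $\mathbb{E}[Q_{k,\ell}^2]$ via a careful enumeration of pairs of backtracking-once paths in the tree, classified by the combinatorial shape of their union (edge-disjoint, shared initial segment, sharing one or both turnaround vertices, etc.). Applying the branching Markov property at each splitting vertex factorises every term into products of the elementary expectations above. Overlap configurations cost at least one factor $\alpha/\mu_k^2<1$ per shared generation, so their total contribution is controlled by a convergent geometric series. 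A careful accounting shows that $\mathbb{E}[Q_{k,\ell}^2]/\mu_k^{4\ell}$ and $\mathbb{E}[Q_{k,\ell}Q_{k,\ell'}]/\mu_k^{2(\ell+\ell')}$ both converge to the same explicit limit, yielding Cauchy-in-$L^2$ and hence $L^2$ convergence. For $k\in[r]\setminus[r_0]$, the same enumeration applies without any cancellation: using $|\mu_k|^{2\ell}\le\alpha^\ell$, each pair contributes at most a constant multiple of $\alpha^{2\ell}$, and the number of topologies grows polynomially in $\ell$ of degree at most five (the two depths $t,t'$, the shared-segment length, and the two turnaround shapes), giving the $C\alpha^{2\ell}\ell^5$ bound.

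\textbf{Main obstacle.} The technical heart is the second-moment enumeration in the supercritical case: cleanly parameterising all overlap topologies between two backtracking-once paths in a Galton-Watson tree, tracking which subtree independences apply in each case, and verifying uniformly in $\ell$ that each overlap is suppressed by at least one factor $\alpha/\mu_k^2$ relative to the disjoint contribution so that the two sequences $\mathbb{E}[Q_{k,\ell}Q_{k,\ell'}]/\mu_k^{2(\ell+\ell')}$ and $\mathbb{E}[Q_{k,\ell}^2]/\mu_k^{4\ell}$ converge to the \emph{same} constant. This parallels the classical $L^2$ step in the Kesten-Stigum theorem for $\langle\phi_k,Z_m\rangle/\mu_k^m$ but is more intricate because each path contains an internal backtracking vertex, so the overlap combinatorics involves either or both backtracks.
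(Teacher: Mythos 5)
Your first-moment calculation is correct: the Poisson factorial-moment identity $\mathbb{E}[N_a(N_a-1)]=M_{aj}^2$ together with conditional independence of sibling subtrees gives $\mathbb{E}[L^u_{k,\ell}\mid\sigma(u)=j]=\alpha^{\ell-t}\mu_k^{t+1}\phi_k(j)$, and the geometric sum $\mu_k\phi_k(x)\sum_{s=1}^\ell(\alpha/\mu_k^2)^s$ converges to the claimed mean when $\mu_k^2>\alpha$. This matches the computation in Bordenave--Lelarge--Massouli\'e.

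The $L^2$ step is where you depart from the reference argument, and also where your proposal is essentially a plan rather than a proof. Bordenave--Lelarge--Massouli\'e do not enumerate pairs of backtracking-once paths. Instead they treat $Q_{k,\ell}/\mu_k^{2\ell}$ as a functional of two families of martingale-like building blocks: conditionally on the tree up to generation $t+1$, the variables $\langle\phi_k,Z^v_t\rangle/\mu_k^t$ (over children $v$) and $S^w_{\ell-t-1}/\alpha^{\ell-t-1}$ (over children $w$) are independent, and the Kesten--Stigum $L^2$ theory for each single family (their Theorem 20 and Lemma 23) is imported wholesale. The $L^2$ convergence of $Q_{k,\ell}/\mu_k^{2\ell}$ then follows by conditioning and substituting these limits, never parameterizing joint path topologies. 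Your bare second-moment enumeration would have to re-derive exactly what that conditional structure gives for free, and the parts you flag but leave open are precisely the hard ones: the assertion that every overlap costs a factor $\alpha/\mu_k^2$ per shared generation is not uniformly true across overlap types (two paths can share only their forward $S$-segments, where the natural rate is $\alpha/\alpha=1$ and the suppression comes indirectly from the $\mu_k^{-2\ell}$ normalization, so the bookkeeping is more delicate than one multiplicative factor per level), and the claim that the diagonal and off-diagonal normalized cross-moments converge to the \emph{same} limit uniformly in $\ell'\geq\ell$ is exactly the content that the martingale decomposition yields automatically. Likewise your $\ell^5$ degree bound in the subcritical case is stated but not derived from the listed parameters; the exponent is correct, but it arises from a specific accounting (the double sum over $t,t'$, the variance of $\langle\phi_k,Z_t\rangle$ which carries an extra $t$ at the Kesten--Stigum threshold, and the overlap depth), which your sketch does not track. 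In short: the outline is a plausible alternative route, but as written it relocates rather than resolves the technical difficulty, and it does not reflect the structure of the cited proof.
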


\begin{defn}
For $ e , f \in \vec E(V)$, we define the "oriented" distance 
\begin{align*}
    \vec d ( e, f) = \min_{\gamma} \ell( \gamma )
\end{align*}
where  the minimum is taken over all self-avoiding paths $\gamma = (\gamma_0, \gamma_1, \cdots , \gamma_{\ell+1} )$  in $G$ such that $(\gamma_0, \gamma_1) = e$, $(\gamma_{\ell} , \gamma_{\ell+1} ) = f$ and for all $1 \leq k \leq \ell+1$, $\{ \gamma_k , \gamma_{k+1} \} \in E$ (we do not require that $e \in \vec E$). Observe that $\vec d$ is not symmetric, we have instead $\vec d (e, f) = \vec d (f^{-1}, e^{-1})$. 

Then,  for integer $t \geq 0$,  we introduce the vector $Y_t(e) = (Y_t(e) (i))_{i \in [r]}$ where, for $i \in [r]$,
\begin{equation*}
Y_t (e)(i) = \ABS{ \left\{ f \in \vec E : \vec d ( e, f) = t  , \sigma(f_2) =  i \right\} }.
\end{equation*}
We also set 
\begin{align*}
    S_t (e) = \|Y_t(e) \|_1 = \ABS{ \left\{ f \in \vec E : \vec d ( e, f) = t \right\} }.
\end{align*}
The vector $Y_t(e)$ counts the types at oriented distance $t$ from $e$.  
\end{defn}

\begin{defn}
For $e \in \vec E(V)$, we define for $t \geq 0$, $\mathcal{Y}_t (e) = \{ f \in \vec E : \vec d (e,f) = t \}$. For $k \in [r]$, we set
\begin{equation*}
P_{k,\ell} (e) = \sum_{t = 0} ^{\ell -1} \sum_{f \in \mathcal{Y}_{t} (e) } L_k(f).
\end{equation*}
where 
\begin{align*}
    L_k(f) = \sum_{(g,h) \in \mathcal{Y}_1 (f) \backslash \mathcal{Y}_t(e); g \ne h  } \langle \phi_k , \tilde Y_t(g)\rangle \tilde S_{\ell - t -1} (h),
\end{align*}
and $\tilde Y_t(g)$, $\tilde S_{\ell - t -1}(h) = \|\tilde Y_{\ell - t-1}(h)\|_1$ are the variables $ Y_t(g)$, $S_{\ell - t -1}(h)$ defined on the graph $G$ where all  edges in $(G,e_2)_t$ have been removed. In particular, if $(G,e)_{2\ell}$ is a tree, $\tilde Y_s(g)$ and $Y_s(g)$ coincide for $s \leq 2 \ell -t$.
\end{defn}

\begin{defn}
We introduce a new random variable, for $v \in V$, 
\begin{align*}
   I_{k,\ell} (v) = \sum_{e \in \vec E : e_2 = v} P_{k,\ell} (e), 
\end{align*}
where $P_{k,\ell}$ was defined above.
\end{defn}

\begin{prop}[Proposition 36 in \cite{bordenave2018nonbacktracking}]
Let $\ell \sim \kappa \log_\alpha n$ with $0 < \kappa < 1/2$. There exists $c  >0$, such that if $\tau, \varphi : \mathcal{G}^* \to \mathbb{R}$ are $\ell$-local, $| \tau(G,o) | \leq  \varphi (G,o)$  and $\varphi$ is non-decreasing by the addition of edges, then if $\mathbb{E} \varphi ( T,o)$ is finite, 
\begin{align*}
\mathbb{E}\ABS{\frac 1 n   \sum_{v=1}^n    \tau ( G, v)  - \mathbb{E} \tau ( T, o)  }  & \leq & c   \frac{\alpha^{\ell/2} \sqrt{ \log n}  }{n ^{\gamma/2}}   \PAR{ \PAR{ \mathbb{E} \max_{v \in [n]} \varphi^4 (G,v) } ^{1/ 4 } \vee \PAR{ \mathbb{E} \varphi^2  (T,o)}^{1/2}  },
\end{align*}
where $(T,o)$ is the random rooted tree associated to the Galton-Watson branching process defined previously started from $Z_0 = \delta_{\iota}$ and $\iota$ has distribution $(\pi(1), \ldots, \pi(r))$. Here $0<\gamma<\frac{1}{2}$.
\end{prop}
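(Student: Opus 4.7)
The plan is to decompose the quantity into a bias term and a fluctuation term, and to control each via a local coupling between the ball $(G,v)_{\ell}$ around a uniformly random vertex $v$ and the first $\ell$ generations of the Galton--Watson tree $(T,o)$. Specifically, I would write
\begin{align*}
\frac{1}{n}\sum_{v=1}^n \tau(G,v) - \mathbb{E}\tau(T,o) = \Bigl(\frac{1}{n}\sum_v \tau(G,v) - \mathbb{E}\tau(G,v)\Bigr) + \bigl(\mathbb{E}\tau(G,v)-\mathbb{E}\tau(T,o)\bigr),
\end{align*}
then bound the first term by a second moment estimate and the second term by the total variation distance between the local neighborhood and the tree.

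For the bias, I would construct an explicit coupling by revealing the edges of $G$ in BFS order from $v$. Each vertex uncovered at depth $t$ has, conditionally, an approximately $\mathrm{Poi}(M_{i\sigma(\cdot)})$ number of unseen neighbors of each type $i$, with the rate differing from the Galton--Watson offspring law by at most $O(|\text{revealed}|/n)$. Introducing the good event $\mathcal{E}_v$ that no ``collision'' (a re-used vertex) occurs up to depth $\ell$ and that $|(G,v)_{\ell}| \leq n^{\gamma}$, a sprinkling argument gives $\mathbb{P}(\mathcal{E}_v^c) = O(\alpha^{2\ell}/n^{1-\gamma})$. Because $\tau$ is $\ell$-local, on $\mathcal{E}_v$ we have $\tau(G,v) = \tau(T,o)$ under the coupling, so
\begin{align*}
\bigl|\mathbb{E}\tau(G,v) - \mathbb{E}\tau(T,o)\bigr| \leq \mathbb{E}\bigl[\bigl(\varphi(G,v)+\varphi(T,o)\bigr)\mathbbm{1}_{\mathcal{E}_v^c}\bigr].
\end{align*}
The monotonicity of $\varphi$ under edge addition lets me upper bound $\varphi(G,v)$ by $\max_u \varphi(G,u)$ independently of $\mathcal{E}_v$, and a Cauchy--Schwarz step against $\mathbb{P}(\mathcal{E}_v^c)^{1/2}$ produces the claimed factor $\alpha^{\ell/2}/n^{\gamma/2}$ times $(\mathbb{E}\max_v \varphi^4(G,v))^{1/4} \vee (\mathbb{E}\varphi^2(T,o))^{1/2}$.

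For the fluctuation, I would bound
\begin{align*}
\VAR\Bigl(\frac{1}{n}\sum_v \tau(G,v)\Bigr) = \frac{1}{n^2}\sum_{u,v}\mathrm{Cov}\bigl(\tau(G,u),\tau(G,v)\bigr).
\end{align*}
Pairs with $d_G(u,v) > 2\ell$ have disjoint $\ell$-neighborhoods; on the joint good event that both neighborhoods are coupled to independent subtrees, their covariance vanishes conditionally on the type configuration. For pairs with $d_G(u,v) \leq 2\ell$, I would bound the covariance by $(\mathbb{E}\max_v \varphi^4(G,v))^{1/2}$ using Cauchy--Schwarz, and use that the expected number of such pairs is $O(n\alpha^{2\ell})$. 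Summing and taking a square root yields a fluctuation bound matching the bias. The $\sqrt{\log n}$ factor absorbs a union bound needed when converting an $L^1$ bound on a random vertex into a uniform statement across $[n]$, together with standard moment estimates on the maximum ball size at depth $\ell \sim \kappa\log_\alpha n$.

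The main obstacle will be making the joint coupling work for pairs of vertices with disjoint neighborhoods while keeping the resulting $\tau(G,u)$ and $\tau(G,v)$ genuinely independent given the type labels; this is delicate because the remaining (unexplored) edges of $G$ constrain the two explorations through the shared vertex pool. I would handle this by revealing the neighborhood of $u$ first, then conditioning on the resulting partial graph and arguing that the neighborhood of $v$ is distributed as a Galton--Watson tree in a slightly smaller vertex set, with an $O(n^{\gamma-1})$ perturbation to the offspring rates. A second subtle point is that the exponent $\gamma$ can be taken arbitrarily in $(0,1/2)$ provided we are willing to take the constant $c = c(\gamma)$ large; this tradeoff reflects the balance between truncation of the ball size at $n^{\gamma}$ and the Poisson approximation error at each BFS step.
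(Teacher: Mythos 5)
This proposition is cited directly from Bordenave--Lelarge--Massouli\'e; the present paper does not reprove it, so there is no ``paper's proof'' to compare against, and you are essentially reconstructing the argument of \cite{bordenave2018nonbacktracking}. Your overall decomposition into a bias term (coupled to the Galton--Watson tree) plus a fluctuation term (second moment) is the right skeleton, and the bias treatment via a BFS coupling with a good event and a Cauchy--Schwarz step against $\mathbb{P}(\mathcal{E}_v^c)^{1/2}$ is correct in spirit.

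The fluctuation step, however, has a genuine gap. You partition the covariance sum according to whether $d_G(u,v)>2\ell$ or $\le 2\ell$ and argue that the far pairs contribute zero covariance. This does not go through as stated: the event $\{d_G(u,v)>2\ell\}$ is itself a function of the same edge randomness that determines $\tau(G,u)$ and $\tau(G,v)$, so you cannot condition on it and then treat the two neighborhoods as independent --- the covariance is an unconditional quantity and the partition by random distance does not split it into a vanishing part and a rare part. The correct route, which you half-gesture at in your final paragraph, is to fix an arbitrary pair $(u,v)$ and build a \emph{joint} coupling of the simultaneous BFS exploration of $(B(u,\ell),B(v,\ell))$ to a pair of \emph{independent} Galton--Watson trees $(T_1,T_2)$, with failure probability $O(\alpha^{2\ell}/n)$ arising from collisions either between the two explorations or within either one. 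On the good event $\tau(G,u)\tau(G,v)=\tau(T_1,o_1)\tau(T_2,o_2)$ with the two factors independent, and on the bad event you bound by Cauchy--Schwarz against $\varphi$; this yields a uniform bound on $|\mathrm{Cov}(\tau(G,u),\tau(G,v))|$ valid for every pair, which you then sum over all $n^2$ pairs. A secondary imprecision: in \cite{bordenave2018nonbacktracking} the exponent $\gamma$ is not a free truncation/sprinkling parameter but the rate of concentration of the empirical type frequencies (through $\bar\alpha_n=\alpha+O(n^{-\gamma})$); the factor $n^{-\gamma/2}$ enters via the error made in approximating the offspring intensities by their limits, not via your ``truncate the ball at $n^\gamma$'' device, and the $\sqrt{\log n}$ is inherited from the moment bound $\mathbb{E}\max_v (S_t(v)/\bar\alpha_n^t)^p\le c(\log n)^p$ rather than from a union bound over vertices.
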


Below are two lemmas to control the growth of $S$. Recall that $S_t$ is a Galton-Walton branching process with offspring distribution $\mathrm{Poi}(\mu_1)$.
\begin{lem}[Lemma 23 in \cite{bordenave2018nonbacktracking}]
Assume $S_0=1$. There exist $c_0,c_1>0$, such that for all $s\geq 0$,
\begin{align*}
    \mathbb{P}(\forall k\geq 1, S_k\leq s\mu_1^k)\geq 1-c_1e^{-c_0s}.
\end{align*}
\end{lem}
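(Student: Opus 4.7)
The plan is to rephrase the statement in terms of the Kesten--Stigum martingale $M_k := S_k/\mu_1^k$, which under the Poisson$(\mu_1)$ offspring distribution is a non-negative mean-one martingale with respect to the natural filtration of the branching process. The event $\{\forall k\geq 1,\ S_k\leq s\mu_1^k\}$ is precisely $\{\sup_{k\geq 1} M_k \leq s\}$, so it suffices to prove $\mathbb{P}(\sup_{k\geq 1} M_k > s)\leq c_1 e^{-c_0 s}$. Applying Doob's maximal inequality to the non-negative submartingale $e^{\theta M_k}$ for some $\theta>0$ yields $\mathbb{P}(\max_{1\leq k\leq N} M_k \geq s) \leq e^{-\theta s}\,\mathbb{E}[e^{\theta M_N}]$, and the full bound follows by letting $N\to\infty$ via monotone convergence on the nested events. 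The remaining task is therefore to bound $\mathbb{E}[e^{\theta M_N}]$ uniformly in $N$ for $\theta$ in some neighborhood of the origin.

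To obtain this uniform moment generating function bound, I would exploit the branching recursion. Writing $h_k(\theta):=\mathbb{E}[e^{\theta M_k}]$ and conditioning on the first generation $S_1\sim\mathrm{Poi}(\mu_1)$, which seeds $S_1$ i.i.d.\ subtrees each evolving for $k-1$ additional generations, one obtains the functional equation $h_k(\theta) = \exp\!\bigl(\mu_1\bigl(h_{k-1}(\theta/\mu_1) - 1\bigr)\bigr)$ with $h_0(\theta)=e^\theta$, using that the probability generating function of $\mathrm{Poi}(\mu_1)$ is $f(z)=e^{\mu_1(z-1)}$. I would then prove by induction on $k$ that there exist $\theta_0>0$ and $C=C(\mu_1)>0$ such that $h_k(\theta)\leq e^{\theta + C\theta^2}$ for all $k\geq 0$ and $\theta\in[0,\theta_0]$. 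Using $e^x\leq 1+x+x^2$ for small $x$ to expand the inner exponential, the coefficient of $\theta^2$ is mapped through the recursion with a contraction factor $1/\mu_1<1$, so the fixed-point condition reduces to $C\geq 1/(\mu_1-1)$, which is achievable precisely because of the supercriticality assumption $\mu_1>1$.

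Combining the two steps gives $\mathbb{P}(\sup_k M_k\geq s)\leq e^{\theta_0+C\theta_0^2}\,e^{-\theta_0 s}$, the required bound with $c_0=\theta_0$ and $c_1=e^{\theta_0+C\theta_0^2}$; for small $s$ the statement becomes trivial as soon as $c_1$ is chosen large enough to make the right-hand side non-negative. I expect the main technical hurdle to be the inductive bookkeeping of the quadratic correction in $h_k$, specifically verifying that the geometric factor $1/\mu_1$ genuinely contracts the second-order term across generations rather than amplifying it. This is exactly where the supercritical assumption $\mu_1>1$ enters quantitatively; everything else reduces to Doob's inequality together with a standard manipulation of the Poisson probability generating function.
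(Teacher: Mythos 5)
The paper states this lemma by citation to Bordenave, Lelarge, and Massouli\'e and does not reproduce the proof, so there is nothing in the text to compare against line by line; I evaluate your argument on its own terms. Your proof is correct and complete. Rewriting the event in terms of the Kesten--Stigum martingale $M_k = S_k/\mu_1^k$, applying Doob's maximal inequality to the submartingale $e^{\theta M_k}$ (legitimate since $x\mapsto e^{\theta x}$ is convex and increasing, once the integrability is known), and then closing the loop with a uniform bound on $h_k(\theta)=\mathbb{E}[e^{\theta M_k}]$ via the Poisson branching recursion $h_k(\theta) = \exp\bigl(\mu_1(h_{k-1}(\theta/\mu_1)-1)\bigr)$ is a clean and standard route. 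The inductive ansatz $h_k(\theta)\leq e^{\theta + C\theta^2}$ works exactly as you describe: expanding $e^x-1\leq x+x^2$ for $x$ in a bounded window, the $\theta^2$ coefficient contracts by the factor $1/\mu_1$, and choosing any $C>1/(\mu_1-1)$ together with $\theta_0$ small enough to absorb the cubic and quartic remainders closes the induction. Two minor points are worth making explicit: (i) the argument uses supercriticality $\mu_1>1$, which is guaranteed in this paper's setting since the Kesten--Stigum hypothesis $|\mu_{r_0}|>\sqrt{\mu_1}$ forces $\mu_1>1$; and (ii) in the inductive step the argument of $h_{k-1}$ is $\theta/\mu_1<\theta$, so the induction stays inside $[0,\theta_0]$, which is exactly why the recursion contracts rather than expands the domain. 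With those observations your constants $c_0=\theta_0$ and $c_1 = e^{\theta_0+C\theta_0^2}$ give the claimed bound, and the trivial adjustment for small $s$ is as you note.
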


We shall denote by $S_t(v)$ the set of vertices at distance $t$ from $v$.
We introduce 
\begin{align*}
& n(i)  =   \sum_{v=1} ^n \mathbbm{1} ( \sigma(v)  = i) ,
 \quad \quad \pi_n(i)   = \frac { n(i)  }{ n} , \nonumber \\
& \alpha_n (i )  =  \sum_{j=1}^r \pi_n(i) W_{ij}  ,  \quad \quad 
  \bar \alpha_n  = \max_{i \in [r]} \alpha_n (i) = \alpha + O ( n^{-\gamma}),
\end{align*}

\begin{lem}[Lemma 29 in \cite{bordenave2018nonbacktracking}]
There exist $c_0, c_1 >0$ such that for all $s \geq 0$ and for any $w \in [n] \cup \vec E(V)$,    
\begin{align*}
    \mathbb{P} \left( \forall t \geq 0: S_{t} (w)   \leq  s \bar \alpha_n^t  \right)  \geq 1 -  c_1 e^{ - c_0 s}. 
\end{align*}
Consequently, for any $p\geq 1$, there exists $c >0$ such that 
\begin{align*}
    \mathbb{E} \max_{v \in [n] , t \geq 0} \left(  \frac{ S_{t} (v) }{ \bar \alpha^t_n} \right )^p \leq c (\log n)^p.
\end{align*}
\end{lem}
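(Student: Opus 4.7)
The strategy is to dominate the ball growth $S_t(w)$ in the random graph $G$ by a Galton--Watson branching process with offspring distribution $\mathrm{Poi}(\bar\alpha_n)$, transfer the previous lemma's tail estimate to $G$ via a union bound, and then convert the tail bound into the claimed $L^p$ moment bound by integrating.

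\textbf{Step 1: Domination by a single-type branching process.} I would start by constructing a standard coupling of the breadth-first exploration from $w \in [n] \cup \vec E(V)$ with a Galton--Watson tree whose offspring law is $\mathrm{Poi}(\bar\alpha_n)$. In the stochastic block model, conditioned on the types of already-discovered vertices, an undiscovered vertex of type $j$ is a neighbor of an exposed vertex of type $i$ independently with probability $W(i,j)/n$. Summing over $j$, the number of new neighbors of a vertex of type $i$ among the remaining unexposed vertices is stochastically dominated by $\mathrm{Poi}(\alpha_n(i))$ (possibly after a straightforward Poissonization argument), which in turn is stochastically dominated by $\mathrm{Poi}(\bar\alpha_n)$. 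Iterating this edge-by-edge exposure gives a coupling such that $S_t(w) \leq \hat S_t$ for all $t$, where $(\hat S_t)$ is a Galton--Watson process with $\mathrm{Poi}(\bar\alpha_n)$ offspring (plus a harmless initial factor since the starting $e \in \vec E$ already contributes one neighbor).

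\textbf{Step 2: Apply the previous lemma.} The cited Lemma 23 gives constants $c_0, c_1 > 0$ such that $\mathbb{P}(\forall k \geq 1,\ S_k \leq s\mu_1^k) \geq 1 - c_1 e^{-c_0 s}$ for the Galton--Watson process with mean $\mu_1$. Applying this to $(\hat S_t)$ (with mean $\bar\alpha_n = \alpha + O(n^{-\gamma})$ in place of $\mu_1$; the constants are uniform in $n$ for large $n$ since $\bar\alpha_n \to \alpha$) yields
\begin{align*}
    \mathbb{P}\bigl(\forall t \geq 0:\ S_t(w) \leq s \bar\alpha_n^t\bigr) \geq \mathbb{P}\bigl(\forall t \geq 0:\ \hat S_t \leq s \bar\alpha_n^t\bigr) \geq 1 - c_1 e^{-c_0 s},
\end{align*}
which is the first assertion.

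\textbf{Step 3: From tail to moments.} For the consequence, take a union bound over $v \in [n]$ (the bound $|[n]| = n$ is what forces the eventual $\log n$ factor):
\begin{align*}
    \mathbb{P}\Bigl(\max_{v\in [n],\ t\geq 0} \tfrac{S_t(v)}{\bar\alpha_n^t} \geq s\Bigr) \leq n c_1 e^{-c_0 s}.
\end{align*}
Then write, for any $p \geq 1$ and any threshold $s_0 = A \log n$,
\begin{align*}
    \mathbb{E}\Bigl(\max_{v,t} \tfrac{S_t(v)}{\bar\alpha_n^t}\Bigr)^p = \int_0^\infty p s^{p-1} \mathbb{P}\Bigl(\max_{v,t} \tfrac{S_t(v)}{\bar\alpha_n^t} \geq s\Bigr) ds \leq s_0^p + \int_{s_0}^\infty p s^{p-1} n c_1 e^{-c_0 s} ds.
\end{align*}
Choosing $A = 2/c_0$ (say) makes the integral from $s_0$ to $\infty$ bounded uniformly in $n$, while the first term contributes $(A\log n)^p$, giving the claimed $c (\log n)^p$ bound.

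\textbf{Main obstacle.} The only delicate point is the coupling in Step 1: one must expose edges in breadth-first order while keeping track of discovered vertices so that the stochastic domination by $\mathrm{Poi}(\bar\alpha_n)$ holds uniformly in the history. For the starting point $w$ being an oriented edge $e \in \vec E(V)$ rather than a vertex, one simply begins the exploration at $e_2$ with the edge $e^{-1}$ forbidden, which affects only the very first step and is absorbed into the constants. Apart from this bookkeeping, all the probabilistic work is done by the single-type Galton--Watson tail bound from the previous lemma and by an elementary tail-to-moment integration.
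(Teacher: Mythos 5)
Your reconstruction is correct and follows the same route as the original proof in Bordenave--Lelarge--Massouli\'e, which the present paper simply cites without reproving: stochastic domination of the breadth-first exploration by a $\mathrm{Poi}(\bar\alpha_n)$ Galton--Watson tree reduces the first claim to Lemma 23, and a union bound over $[n]$ followed by the tail-to-moment integration with cutoff $s_0\asymp\log n$ yields the moment bound. The bookkeeping you note for an oriented-edge root (start at $e_2$ with $e^{-1}$ forbidden) is also how the cited proof handles $w\in\vec E(V)$.
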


\begin{prop}[Proposition 38 in \cite{bordenave2018nonbacktracking}]
Let $\ell \sim   \kappa \log_\alpha n$ with $0 < \kappa < \gamma/5$. For any $ k\in [r_0]$, there exists $\rho'_k >0$ such that w.h.p.
\begin{align*}
    \frac 1 {\alpha n} \sum_{e \in \vec E}  \frac{P^2_{k,\ell} (e) }{ \mu_k ^{4 \ell}}  \to  \rho'_k.
\end{align*}
\end{prop}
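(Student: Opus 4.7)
The plan is to apply Proposition 36 to the $\ell$-local vertex functional
\begin{equation*}
\tau(G, v) := \sum_{e \in \vec E: e_2 = v} \frac{P_{k,\ell}^2(e)}{\mu_k^{4\ell}},
\end{equation*}
so that $\frac{1}{\alpha n}\sum_{e \in \vec E} P_{k,\ell}^2(e)/\mu_k^{4\ell} = \frac{1}{\alpha}\cdot\frac{1}{n}\sum_v \tau(G, v)$ reduces to a local-to-global empirical average. Since $P_{k,\ell}(e)$ is determined by the oriented ball of radius $\ell$ around $e$, $\tau(G, v)$ is $(\ell+1)$-local in the sense of Proposition 36.

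The first step is to produce an envelope $\varphi$ that dominates $|\tau|$ and is non-decreasing under edge addition. Using the expansion $P_{k,\ell}(e)=\sum_{t=0}^{\ell-1}\sum_{f\in\mathcal{Y}_t(e)}L_k(f)$ and the crude bound $|L_k(f)|\leq C\,\tilde S_1(f)\tilde S_{\ell-t-1}(f)$, one obtains $|P_{k,\ell}(e)|\leq C\sum_{t=0}^{\ell-1} S_{t+1}(e)S_{\ell-t}(e)$. Lemma 29 then yields $S_t(e)\leq s\bar\alpha_n^t$ uniformly in $e$ and $t$ with probability $\geq 1-c_1 e^{-c_0 s}$, so with $s=\Theta(\log n)$ one gets $|P_{k,\ell}(e)|\leq C\ell s^2 \bar\alpha_n^{\ell+1}$. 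Crucially, because $|\mu_k|>\sqrt{\mu_1}=\sqrt{\alpha}$ for $k\in[r_0]$, the ratio $\bar\alpha_n^{2\ell+2}/\mu_k^{4\ell}$ stays $O(1)$, so the envelope $\varphi(G,v)=C\,\deg(v)\,\ell^2(\log n)^4$ satisfies the fourth-moment hypothesis of Proposition 36 (its maximum over $v\in [n]$ is polylogarithmic in $n$ by another application of Lemma 29 to the degree and a union bound).

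The second step is to identify the tree expectation $\mathbb{E}\tau(T,o)$. On the Galton-Watson tree $(T,o)$ rooted at a vertex of random type $\iota\sim\pi$, the oriented edges with $e_2=o$ are exactly the $D$ edges $(x,o)$ for $x$ a child of $o$, and on each such edge $P_{k,\ell}((x,o))$ reduces to the variable $Q_{k,\ell}(x)$ of Definition $4$ (applied to the subtree attached to $x$ together with the one-step backtrack through $o$). By Theorem 25, $Q_{k,\ell}(x)/\mu_k^{2\ell}$ converges in $L^2$ to a random variable $\zeta_k(x)$ with mean $\mu_k\phi_k(\sigma(x))/(\mu_k^2/\alpha-1)\neq 0$ and finite second moment, hence
\begin{equation*}
\mathbb{E}\,\tau(T,o)=\mathbb{E}\!\left[\sum_{x=1}^D\frac{Q_{k,\ell}(x)^2}{\mu_k^{4\ell}}\right]\;\longrightarrow\;\alpha\rho'_k\quad(\ell\to\infty),
\end{equation*}
where $\alpha\rho'_k:=\mathbb{E}\sum_{x=1}^D\zeta_k(x)^2>0$ (strictly positive because $\zeta_k$ is not a.s.\ zero, since its mean is nonzero for at least one child type).

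Finally, combining Proposition 36 with the displayed tree limit---and choosing $\ell\sim\kappa\log_\alpha n$ with $\kappa<\gamma/5$ so that the quantitative error $\alpha^{\ell/2}\sqrt{\log n}/n^{\gamma/2}$ in Proposition 36 tends to zero while the tree-side $L^2$ approximation from Theorem 25 is also small---yields $\frac{1}{\alpha n}\sum_e P_{k,\ell}^2(e)/\mu_k^{4\ell}\xrightarrow{(P)}\rho'_k$, as claimed. The main technical obstacle is balancing these two rates simultaneously: the envelope for Proposition 36 blows up like $\bar\alpha_n^{2\ell}/\mu_k^{4\ell}$ in the borderline regime $\mu_k^2\approx\alpha$, and uniform polylogarithmic control across all $n$ vertices relies on the sharp tail bound of Lemma 29 combined with the strict generalized KS gap $\mu_k^2>\alpha$. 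A secondary subtlety is that the $L^2$ convergence of $Q_{k,\ell}/\mu_k^{2\ell}$ in Theorem 25 must be quantitative enough for the tree expectation to stabilise at the chosen $\ell=\ell(n)$, which is exactly why $\kappa<\gamma/5$ appears in the hypothesis.
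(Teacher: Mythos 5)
The paper itself does \emph{not} prove this proposition: it appears in the subsection ``Corresponding Results for Graphon,'' which opens with the disclaimer that ``the proofs are similar as in \cite{bordenave2018nonbacktracking}, so we omit it here.'' So there is nothing in-text to compare your argument against; what you have done is supply a sketch where the paper supplies only a pointer. Your overall architecture is the right one and matches the machinery the paper uses elsewhere (e.g.\ in the proof of Lemma \ref{l:cconverge}): apply Proposition \ref{p:indep} to a vertex-local functional summing $P_{k,\ell}^2(e)/\mu_k^{4\ell}$ over $e_2=v$, dominate it with an envelope controlled via Lemma \ref{l:Scontrol2} and the strict gap $\mu_k^2>\alpha$, and identify the tree limit $\rho_k'$ through the $L^2$ convergence of $Q_{k,\ell}/\mu_k^{2\ell}$ (Theorem 25).

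That said, a few details need to be tightened before this would be airtight. First, the locality: $P_{k,\ell}(e)$ involves $\mathcal{Y}_t(e)$ for $t\le\ell-1$ and, inside $L_k(e')$, the objects $\tilde{\mathcal{Y}}_t(g)$ and $\tilde S_{\ell-t-1}(h)$ probe out to combined depth roughly $2\ell$, so $\tau(G,v)$ is a $\sim 2\ell$-local functional, not $(\ell+1)$-local; this degrades the $\alpha^{\ell/2}/n^{\gamma/2}$ error in Proposition \ref{p:indep} to order $\alpha^{\ell}/n^{\gamma/2}$ and is part of why the admissible $\kappa$ is $<\gamma/5$ rather than $<1/2$. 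Second, the envelope $\varphi$ in Proposition \ref{p:indep} must itself be a functional on $\mathcal{G}^*$ that is non-decreasing under edge addition; writing $\varphi(G,v)=C\,\deg(v)\,\ell^2(\log n)^4$ bakes in a scalar that is not a graph functional. The paper's own pattern (used for Lemma \ref{l:cconverge}) is to set $N(v)=\max_{0\leq t\leq\ell}\max_{u\in(G,v)_t}\max_{s\leq 2\ell-t}S_s(u)/\alpha^s$ and take $\varphi(G,v)=CN(v)^p$ with the appropriate power $p$; only afterwards does Lemma \ref{l:Scontrol2} convert $\mathbb{E}\max_v\varphi^4(G,v)$ into polylogarithmic growth. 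You should mirror that. Third, a small but confusing slip: $Q_{k,\ell}(x)$ in the definition is $Q_{k,\ell}$ computed on $T^x = T\setminus T_x$ (the tree with the subtree at $x$ \emph{removed}), which is exactly what $P_{k,\ell}((x,o))$ sees on a tree since $\mathcal{Y}_t((x,o))$ explores away from $x$; your parenthetical ``applied to the subtree attached to $x$ together with the one-step backtrack through $o$'' describes the complementary piece $T_x\cup\{o\}$ and is backwards, even though your headline identification $P_{k,\ell}((x,o))=Q_{k,\ell}(x)$ is the correct one. With these repairs the sketch follows the BLM blueprint faithfully.
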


\begin{defn}
We set 
\begin{align*}
    I_{k} (v) = \sum_{e : e_2 = v}  s \sqrt { n} \xi_k' (e),
\end{align*}
where $s = \sqrt{ \alpha \rho'_k}$ and $\rho'_k$ was defined above and $\xi'$ is an orientation of $\xi$.
\end{defn}

\begin{lem}[Lemma 42 in \cite{bordenave2018nonbacktracking}]
We have that w.h.p.
\begin{align*}
    \frac{1} n \sum_{v=1}^n  \ABS{  I_k (v)   -  \frac{I_{k,\ell} (v)}{\mu_k ^ {2 \ell} }}  = o(1).  
\end{align*}
\end{lem}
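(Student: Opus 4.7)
The plan is to reduce the vertex-level $L^1$ bound to an edge-level $L^2$ closeness between the two edge vectors appearing in $I_{k,\ell}(v)/\mu_k^{2\ell}$ and $I_k(v)$. Set $\chi_k(e) := P_{k,\ell}(e)/\mu_k^{2\ell}$, so that $I_{k,\ell}(v)/\mu_k^{2\ell} = \sum_{e:e_2=v}\chi_k(e)$ and $I_k(v) = \sum_{e:e_2=v} s\sqrt{n}\,\xi_k'(e)$ with $s=\sqrt{\alpha\rho_k'}$. Writing $\Delta_k := \chi_k - s\sqrt{n}\,\xi_k'$ and applying Cauchy--Schwarz vertex-by-vertex and then across vertices,
\begin{align*}
\sum_{v=1}^n \Bigl|I_{k,\ell}(v)/\mu_k^{2\ell} - I_k(v)\Bigr|
\leq \sum_{v=1}^n \sqrt{d_v}\Bigl(\sum_{e:e_2=v}\Delta_k(e)^2\Bigr)^{1/2}
\leq \sqrt{2|E|}\,\|\Delta_k\|_2.
\end{align*}
Since $|E|=O(n)$ w.h.p., it is enough to prove $\|\Delta_k\|_2 = o(\sqrt{n})$.

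To establish this $L^2$ closeness, I would use two ingredients. The first is the norm identity $\|\chi_k\|_2^2 = (1+o(1))\,\alpha\rho_k' n = (1+o(1))\,s^2 n$, which is precisely Proposition 38. The second is that $\chi_k$ is an approximate right eigenvector of $B$ with eigenvalue $\mu_k$: on any edge $e$ whose depth-$(2\ell+2)$ neighborhood is a tree, the path structure defining $P_{k,\ell}$ yields an elementary one-step recursion showing $(B\chi_k)(e) = \mu_k\,\chi_k(e) + (\textrm{a boundary term of lower order})$ through direct bookkeeping of the backtracking-once path sums. Using the tangle-free framework in sparse random graphs --- i.e.\ the local tree approximation that underlies Proposition 36 together with the uniform branching-process growth bounds of Lemmas 23 and 29 --- this identity holds at all but $o(n)$ edges, with residual second-moment bound $\|B\chi_k - \mu_k \chi_k\|_2^2 = o(\mu_k^2 n)$.

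Combining the approximate eigenvector estimate with the spectral gap of $B$ from Theorem~\ref{t:eig} --- all eigenvalues of $B$ outside $\mu_1,\ldots,\mu_{r_0}$ have magnitude at most $\sqrt{\mu_1}+o(1)$, and $\mu_k$ is simple within $[r_0]$ --- the component of $\chi_k$ in the invariant subspace of $B$ complementary to $\xi_k'$ has $L^2$-norm $o(\sqrt n)$. Together with the norm identity from Proposition 38, this forces $\chi_k = (\pm s\sqrt n + o(\sqrt n))\,\xi_k' + r$ with $\|r\|_2 = o(\sqrt n)$; the sign is fixed by the orientation convention on $\xi_k'$, yielding $\|\Delta_k\|_2 = o(\sqrt{n})$ as required.

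The main obstacle is the approximate-eigenvector estimate for $\chi_k$: one must compare the random-graph paths contributing to $P_{k,\ell}$ with their tree idealization, quantitatively handling both the contribution of edges whose $(2\ell+2)$-neighborhood contains a cycle (``tangled'' edges) and the residual boundary terms in the backtracking-path recursion. This step calls on the full second-moment machinery of Bordenave--Lelarge--Massouli\'e, but once it is in hand, turning it into the $L^2$-closeness uses only the quantitative spectral separation $|\lambda_j(B)-\mu_k| \geq c>0$ for $j \neq k$ afforded by Theorem~\ref{t:eig}.
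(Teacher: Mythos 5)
The paper does not actually supply a proof of this statement: in Section 6.2 it records the graphon analogue and states explicitly ``The proofs are similar as in \cite{bordenave2018nonbacktracking}, so we omit it here.'' So there is no internal argument to compare against line by line; the comparison is really with the Bordenave--Lelarge--Massouli\'e argument that the paper defers to. With that caveat, here is the assessment.

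Your reduction from the vertex-wise $L^1$ bound to the edge-wise $L^2$ bound is sound: two applications of Cauchy--Schwarz and $|E|=O(n)$ w.h.p.\ give exactly $\frac{1}{n}\sum_v |\cdot| \le \sqrt{2|E|}\,\|\Delta_k\|_2 / n$, so $\|\Delta_k\|_2 = o(\sqrt n)$ suffices. The norm identity $\|\chi_k\|_2^2 = (1+o(1))\,\alpha\rho_k'\, n$ is indeed the content of the Proposition-38 analogue, and the overall shape (norm matching plus eigenspace alignment via spectral separation) is morally what the cited argument does. So the scaffolding is right.

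The genuine gap is the middle step, where you assert that $\chi_k := P_{k,\ell}/\mu_k^{2\ell}$ satisfies $(B\chi_k)(e)=\mu_k\chi_k(e)+(\text{lower order})$ ``through an elementary one-step recursion'' and ``direct bookkeeping.'' This is not elementary, and the claimed recursion does not fall out of the definition. Unfolding $(BP_{k,\ell})(e)=\sum_{f\in\mathcal{Y}_1(e)}P_{k,\ell}(f)$ on a tree-like neighborhood produces a sum over $e'\in\mathcal{Y}_{t+1}(e)$, $t=0,\ldots,\ell-1$, carrying the same weights as $P_{k,\ell}(e)$, i.e.\ the depth index on $e'$ is shifted by one while the inner $L_k$-weights are not simply rescaled; there is no local identity that produces a multiplicative factor of $\mu_k$. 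In \cite{bordenave2018nonbacktracking} the eigenvector proximity is established by a different route: they first prove (via the tangle-free $\ell$-fold operator $B^{(\ell)}$) that the fixed vector built from the eigenfunction values propagated $\ell$ steps forward is close to $\xi_k$, and only afterwards relate $P_{k,\ell}$ to that object by controlling the backtracking-once sums with the second-moment local-limit machinery; $P_{k,\ell}$ itself is never shown to be an approximate eigenvector of $B$ by a local recursion. You cannot simply import ``$\chi_k$ is an approximate eigenvector'' as a lemma-free input — it is, up to the normalization, equivalent to what you are trying to prove.

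A second, smaller issue: even granting an approximate-eigenvector estimate, the step ``the component of $\chi_k$ in the invariant subspace complementary to $\xi_k'$ has norm $o(\sqrt n)$'' uses spectral projections of a highly non-normal matrix. The eigen-subspaces of $B$ are not orthogonal, so an $\ell^2$ bound on the residual $B\chi_k-\mu_k\chi_k$ together with a spectral gap does not directly bound the $\ell^2$ norm of the component off $\xi_k'$ without controlling the conditioning of the (oblique) spectral projector; this is handled carefully in the cited paper and cannot be waved through. In short: the reduction and the normalization are fine; the alignment step is where the real work lies, and the one-step-recursion claim would need to be replaced by the actual quadratic-form argument of the reference.
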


\subsection{Corresponding Results for Graphon}
This subsection contains corresponding definitions and results for graphons. The proofs are similar as in \cite{bordenave2018nonbacktracking}, so we omit it here. For a vertex $v$, we use $X_v$ or $X(v)\in [0,1]$ to denote its type. We denote $\alpha=\mu_1$.
\begin{defn}
We consider a branching process where a particle of type $x \in [0,1]$ has $\mathrm{Poi}(\mu_1)$ number of children, where their types follow the distribution $Q(x,\cdot)/\mu_1$ independently. 
\end{defn}

\begin{defn}
We consider a functional of the branching process which depends on particles in more than one generation. We denote by $V$ the particles of the random tree and $o \in V$ the starting particle. Particle $v \in V$ has type $X_v \in [0,1]$ and generation $|v|$ from $o \in V$. For $v \in V$ and integer $t \geq 0$, let $Y^v_t$ denote the set of particles of generation $t$ from $v$ in  the subtree of particles with common ancestor $v \in V$. We set $S^v _t = |Y_t^v|$. We fix an integer $ k \in [r_0]$, $\ell \geq 1$ and set 
\begin{equation*}
Q_{k,\ell} = \sum_{(u_0,\ldots,u_{2\ell+1})\in{\mathcal P}_{2\ell+1}} f_k ( X (u_{2 \ell +1}) ) ,
\end{equation*}
where the sum is over $ (u_0, \ldots, u_{2 \ell +1})\in{\mathcal P}_{2\ell+1}$, the set of paths in the tree starting from $u_0 = o$ of length $2 \ell +1$ with $(u_0, \ldots,
u_{\ell})$ and $(u_{\ell}, \ldots, u_{2 \ell +1})$ non-backtracking and
$u_{\ell -1}  = u_{\ell +1}$ (i.e. $ (u_0, \ldots, u_{2 \ell +1})$
backtracks exactly once at the $\ell+1$-th step). 
\end{defn}

The following alternative representation of $Q_{k,\ell}$ will prove useful. By distinguishing paths $ (u_0, \ldots, u_{2 \ell +1})$ according to the smallest depth $t\in\{0,\ldots,\ell-1\}$ to which they climb back after visiting $u_{\ell+1}$ and the node $u_{2\ell -t}$ they then visit at level $t$ we have that
\begin{equation*}
Q_{k,\ell} =  \sum_{t=0} ^{\ell -1} \sum_{ u \in Y^o_{t}} L_{k,\ell}^u,
\end{equation*}
where we let for $|u|=t\geq 0$,
\begin{align*}
    L^u_{k,\ell} = \sum_{w\in Y^u_1} S^w_{\ell-t-1}\left( \sum_{v\in
    Y^u_1\backslash \{w\}} \sum_{w'\in Y_t^v} f_k(X_{w'}) \right).
\end{align*}

\begin{defn}
Consider the branching process defined above started from $X(o)=x$. We denote by $(T,o)$ the associated random rooted tree.  Let $D$ be the number of offspring of the root and for $1 \leq y \leq D$,  let $Q_{k,\ell}(y) $ be the random variable $Q_{k,\ell}$ defined on the tree $T^y$ where the subtree attached to $y$ is removed and set 
\begin{align*}
    J_{k,\ell} = \sum_{x=1}^D Q_{k,\ell} (y).
\end{align*}
\end{defn}

\begin{lem}\label{l:weakconverge}
Consider the branching process defined above started from $X(o)=x$.  For $k \in [r_0]$, we have $J_{k,\ell}   \mu_k ^{-2\ell}  - \alpha  \mu_k f_k (x)/(\mu_k ^2 / \alpha -1)$ converges in $L^2$ to a centered variable $Y_{k,x}$ satisfying $\mathbb{E}  |Y_{k,x}| \leq C$ uniformly for $x\in [0,1]$.
\end{lem}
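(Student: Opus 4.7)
The plan is to mirror Bordenave--Lelarge--Massoulié's proof of their Lemma 41 (the SBM analog), replacing finite-type offspring distributions and matrix eigenvectors by the integral operator $\mathcal{T}g(x) = \int Q(x,y)g(y)\,dy$ and its eigenfunctions $f_k$. Write
\begin{align*}
J_{k,\ell} = \sum_{y \in Y^o_1} Q_{k,\ell}(y),
\end{align*}
so that, conditional on the root type $X(o) = x$ and the offspring types $(X(y))_{y \in Y^o_1}$, the subtrees rooted at the $y$'s are independent copies of the branching process started from the respective offspring types. The two ingredients I need are: (i) the graphon analog of Theorem 25 of [Bordenave et al.] for each individual $Q_{k,\ell}(y)/\mu_k^{2\ell}$, and (ii) a computation of the conditional mean using the eigenfunction identity.

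First I would establish the graphon analog of Theorem 25: conditional on $X(o)=x$, $Q_{k,\ell}/\mu_k^{2\ell}$ converges in $L^2$ as $\ell\to\infty$ to a random variable $W_{k,x}$ with $\mathbb{E}[W_{k,x}] = \mu_k f_k(x)/(\mu_k^2/\alpha - 1)$ and $\sup_x \mathbb{E}[W_{k,x}^2] < \infty$. Starting from the decomposition
\begin{align*}
Q_{k,\ell} = \sum_{t=0}^{\ell-1}\sum_{u\in Y^o_t} L^u_{k,\ell},
\end{align*}
the central identity is that the transition kernel $P(x,\cdot) := Q(x,\cdot)/\alpha$ governing offspring types satisfies $Pf_k = (\mu_k/\alpha)f_k$, so that $\mathbb{E}[\sum_{u\in Y^o_t}f_k(X_u)\mid X(o)=x] = \alpha^t (P^t f_k)(x) = \mu_k^t f_k(x)$. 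Plugging this into $\mathbb{E}[L^u_{k,\ell}\mid X(u)]$ yields a factor of $\mu_k^{2(\ell-t-1)}$, and summing the resulting geometric series in $t$ produces the announced mean after normalizing by $\mu_k^{2\ell}$. The $L^2$ Cauchy estimate is obtained by expanding $\mathrm{Var}(Q_{k,\ell+1}/\mu_k^{2(\ell+1)} - Q_{k,\ell}/\mu_k^{2\ell})$ and invoking the spectral gap $|\mu_k|^2 > \mu_1 = \alpha$, which makes the dominant geometric ratios summable.

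Given (i), step (ii) is immediate: conditional on $X(o)=x$, the $Q_{k,\ell}(y)/\mu_k^{2\ell}$ are independent given the offspring types, and each offspring's type is drawn from $Q(x,\cdot)/\alpha$. Using the eigenfunction identity once more,
\begin{align*}
\mathbb{E}\!\left[\,J_{k,\ell}\mu_k^{-2\ell}\,\big|\,X(o)=x\right] \;\longrightarrow\; \alpha\cdot\int \frac{Q(x,y)}{\alpha}\cdot\frac{\mu_k f_k(y)}{\mu_k^2/\alpha - 1}\,dy \;=\; \frac{\alpha\,\mu_k f_k(x)}{\mu_k^2/\alpha - 1},
\end{align*}
matching the announced centering. $L^2$ convergence for the sum follows since $D\sim\mathrm{Poi}(\alpha)$ has all moments and the summands are conditionally independent with uniformly bounded second moments. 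The uniform $L^1$ bound $\mathbb{E}|Y_{k,x}|\le C$ then follows from the pointwise bound $|f_k(x)|\le M/|\mu_k|\le M/\sqrt{\mu_1}$ (from Cauchy--Schwarz applied to $\mu_k f_k(x) = \int Q(x,y)f_k(y)dy$) combined with the uniform second-moment control from step (i).

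The main obstacle is proving the graphon Theorem 25 analog. In the SBM proof the spectral decomposition is a finite sum and one can explicitly track contributions of each $\phi_j$; here one must work with the full spectral decomposition $Q(x,y) = \sum_j \mu_j f_j(x)f_j(y)$ and control the contribution of the infinite "bulk" $|\mu_j|\le \sqrt{\mu_1}$. The key replacement for finite-dimensional bounds is the Hilbert--Schmidt estimate $\sum_j \mu_j^2 = \int\!\!\int Q^2 \le M^2$, which allows one to absorb second-moment contributions from the bulk into a uniform constant, while the "signal" terms from $j\in[r_0]$ are handled exactly as in the SBM case. With that in place, the decomposition of $L^u_{k,\ell}$ into its conditional mean plus an orthogonal fluctuation, and the resulting telescoping/martingale estimate, go through verbatim.
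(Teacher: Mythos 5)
There is a genuine gap, and it begins with a misreading of the definition of $J_{k,\ell}$. The paper (following Bordenave--Lelarge--Massouli\'e) defines $Q_{k,\ell}(y)$ as ``the random variable $Q_{k,\ell}$ defined on the tree $T^y$ where the subtree attached to $y$ is \emph{removed}.'' That is, $T^y$ is the original tree $T$ with the entire branch rooted at offspring $y$ deleted; the root of $T^y$ is still $o$, and $Q_{k,\ell}(y)$ is evaluated there. You instead treat $Q_{k,\ell}(y)$ as the functional of the \emph{subtree rooted at $y$}, i.e., a fresh copy of the branching process started from the offspring type $X_y$. These are entirely different objects, and the consequence is fatal for your step~(ii): the random variables $\{Q_{k,\ell}(y)\}_{y\le D}$ under the correct definition all share almost the whole tree (each pair $T^y$, $T^{y'}$ differs only by two removed branches), so they are emphatically \emph{not} conditionally independent given the offspring types. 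The decomposition of $J_{k,\ell}$ into an i.i.d.\ sum of subtree contributions, which is the backbone of your argument, does not apply. (For orientation, the reason $J_{k,\ell}$ is defined this way is that it must match the tree limit of $I_{k,\ell}(v)=\sum_{e: e_2=v}P_{k,\ell}(e)$, where each $P_{k,\ell}(e)$ is a neighborhood functional that \emph{excludes} the direction back through $e$; the exclusion is precisely what ``remove the subtree at $y$'' encodes.)

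The arithmetic slip in the centering computation is a symptom of the same misreading and happens to conceal it. Under your interpretation you would get
\begin{align*}
\alpha\int \frac{Q(x,y)}{\alpha}\cdot\frac{\mu_k f_k(y)}{\mu_k^2/\alpha-1}\,dy
= \frac{\mu_k}{\mu_k^2/\alpha - 1}\int Q(x,y)f_k(y)\,dy
= \frac{\mu_k^2 f_k(x)}{\mu_k^2/\alpha-1},
\end{align*}
which does \emph{not} agree with the stated centering $\alpha\mu_k f_k(x)/(\mu_k^2/\alpha-1)$; these differ by a factor $\mu_k/\alpha$. So the proposed derivation, taken literally, would in fact prove a statement with the wrong constant, and the fact that you wrote down the right constant is due to an erroneous evaluation of the integral rather than a correct mechanism. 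Your step~(i), the graphon analog of Theorem~25 via the eigenfunction identity $(Q/\alpha)f_k=(\mu_k/\alpha)f_k$, the spectral gap $\mu_k^2>\alpha$, and the Hilbert--Schmidt control $\sum_j\mu_j^2\le M^2$ on the bulk, is the right groundwork and is exactly what the paper invokes; but the passage from that theorem to Lemma~\ref{l:weakconverge} must track how $J_{k,\ell}$ relates to the \emph{full} $Q_{k,\ell}$ (roughly $J_{k,\ell}\approx (D-1)Q_{k,\ell}$ minus the $t=0$ contribution, with the size-bias from $D$ accounted for), as in the BLM proof of their Lemma 41, rather than through the i.i.d.\ offspring decomposition you set up.
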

This lemma is proved by applying the following theorem.

\begin{thm}
Consider the branching process defined above started from $X(o)=x$.  For $k \in [r_0]$, $Q_{k,\ell}/ \mu_k ^{2 \ell}$ converges in $L^2$ as $\ell$ tends to infinity to a random variable with mean $ \mu_k\phi_k
(x)  / (\mu_k^2 / \alpha-1)$.
\end{thm}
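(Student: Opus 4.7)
The plan is to mirror the argument for Theorem 25 in \cite{bordenave2018nonbacktracking}, substituting the discrete matrix $M$ with the integral operator $T$ on $L^2([0,1])$ defined by $(Tg)(x)=\int_0^1 Q(x,y)g(y)dy$, which satisfies $Tf_k=\mu_k f_k$. Throughout, the spectral decomposition $Q(x,y)=\sum_{i\ge 1}\mu_i f_i(x)f_i(y)$ (valid since $Q$ is bounded and symmetric by Assumption 1) lets us treat the branching dynamics by ``decomposing along eigenfunctions.'' The $L^2$ convergence will be established by showing $\{Q_{k,\ell}/\mu_k^{2\ell}\}_{\ell\ge 1}$ is Cauchy in $L^2$; for this we compute the first and second moments explicitly and control cross-terms using the spectral gap $|\mu_k|>\sqrt{\alpha}$ guaranteed by $k\in[r_0]$.

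For the first moment, I would use the alternative representation
\[
Q_{k,\ell} = \sum_{t=0}^{\ell-1}\sum_{u\in Y^o_t} L^u_{k,\ell},
\]
and iterated conditional expectations. Given $X_u=y$, the children form a Poisson process with intensity $Q(y,\cdot)$, and by Campbell's formula combined with $\mathbb{E}[\sum_{w'\in Y^v_t}f_k(X_{w'})\mid X_v=z]=\mu_k^t f_k(z)$ and $\mathbb{E}[S^w_{\ell-t-1}\mid X_w]=\alpha^{\ell-t-1}$, one obtains $\mathbb{E}[L^u_{k,\ell}\mid X_u]=\alpha^{\ell-t}\mu_k^{t+1}f_k(X_u)$. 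Summing over $u\in Y^o_t$ yields $\mathbb{E}[\sum_{u\in Y^o_t}L^u_{k,\ell}\mid X_o=x]=\alpha^{\ell-t}\mu_k^{2t+1}f_k(x)$, and the geometric sum over $t$ gives
\[
\mathbb{E}[Q_{k,\ell}/\mu_k^{2\ell}\mid X_o=x]
= \frac{\mu_k f_k(x)}{\mu_k^2/\alpha-1}\bigl(1-(\alpha/\mu_k^2)^\ell\bigr)\;\longrightarrow\;\frac{\mu_k f_k(x)}{\mu_k^2/\alpha-1},
\]
since $\mu_k^2/\alpha>1$.

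For the second moment and the Cauchy property, I would compute $\mathbb{E}[Q_{k,\ell}^2\mid X_o=x]$ by pairing contributions $L^u_{k,\ell}\cdot L^{u'}_{k,\ell'}$ and classifying pairs by the depth $s$ of the most recent common ancestor of $u$ and $u'$. Given the common ancestor and its type, the two subtrees evolve independently; each factor contributes $\mu_k^{(\ell-s)+(\ell'-s)}$ from the eigenfunction identity and $\alpha^{s}$ from the branching at shallow levels, while summing over the random positions of the ancestors yields further factors of $\alpha$ or $\mu_k$ depending on whether the eigenfunction or the total mass is being propagated. When one expands the within-subtree product $Q(y,z)$ via the eigendecomposition, the ``non-$k$'' eigencomponents produce geometric terms $(\mu_i^2/\alpha)^s$ with $|\mu_i|\le\sqrt{\alpha}$, which are therefore summable after normalizing by $\mu_k^{4\ell}$. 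Applied to the increment $Q_{k,\ell+1}/\mu_k^{2\ell+2}-Q_{k,\ell}/\mu_k^{2\ell}$, the same bookkeeping gives a variance bound that decays geometrically in $\min(\ell,\ell')$ at rate $\alpha/\mu_k^2<1$, establishing the Cauchy property and hence $L^2$ convergence.

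The main obstacle is the second-moment computation: tracking the branching of two paths that share an initial prefix, and handling the continuous spectrum contribution from eigenfunctions $f_i$ with $|\mu_i|\le\sqrt{\alpha}$. In the finite-type setting of \cite{bordenave2018nonbacktracking} this is controlled through the matrix $M$, whereas here one must justify interchanging the infinite spectral sum with the branching expectations and verify that $\sum_i \mu_i^2$-type series converge using $\|Q\|_\infty\le M$. Once the key inequality $\sum_{i\ge 1}\mu_i^2 f_i(x)^2=\int Q(x,y)^2dy\le M^2$ is used to dominate tail eigencomponents uniformly in $x$, the classical argument goes through essentially verbatim.
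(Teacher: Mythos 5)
Your proposal is correct and takes essentially the same approach as the paper: the paper explicitly omits this proof, stating only that it is ``similar'' to Theorem 25 of \cite{bordenave2018nonbacktracking}, and your plan is precisely the natural generalization they intend --- replace the matrix $M$ by the integral operator with kernel $Q$, run the first-moment computation through $T f_k = \mu_k f_k$ and Campbell's formula on the Poisson offspring process, and establish the $L^2$-Cauchy property via second-moment bookkeeping keyed to the spectral gap $\mu_k^2 > \alpha$, using the uniform bound $\sum_i \mu_i^2 f_i(x)^2 = \int Q(x,y)^2\,dy \le M^2$ to dominate the infinite tail of the spectrum. (One cosmetic remark: the theorem statement writes $\phi_k(x)$, a carry-over from the SBM notation of \cite{bordenave2018nonbacktracking}; in the graphon setting this is the eigenfunction $f_k(x)$, which you correctly use.)
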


\begin{defn}
For $ e , f \in \vec E(V)$, we define the "oriented" distance 
\begin{align*}
    \vec d ( e, f) = \min_{\gamma} \ell( \gamma )
\end{align*}
where  the minimum is taken over all self-avoiding paths $\gamma = (\gamma_0, \gamma_1, \cdots , \gamma_{\ell+1} )$  in $G$ such that $(\gamma_0, \gamma_1) = e$, $(\gamma_{\ell} , \gamma_{\ell+1} ) = f$ and for all $1 \leq k \leq \ell+1$, $\{ \gamma_k , \gamma_{k+1} \} \in E$ (we do not require that $e \in \vec E$). Observe that $\vec d$ is not symmetric, we have instead $\vec d (e, f) = \vec d (f^{-1}, e^{-1})$. 

Then,  for integer $t \geq 0$, we set
\begin{align*}
    S_t (e) = \ABS{ \left\{ f \in \vec E : \vec d ( e, f) = t \right\} }.
\end{align*}
\end{defn}

\begin{defn}
For $e \in \vec E(V)$, we define for $t \geq 0$, $\mathcal{Y}_t (e) = \{ f \in \vec E : \vec d (e,f) = t \}$. For $k \in [r_0]$, we set
\begin{equation}\label{eq:defPkl}
P_{k,\ell} (e) = \sum_{t = 0} ^{\ell -1} \sum_{e' \in \mathcal{Y}_{t} (e) } L_k(e').
\end{equation}
where
\begin{align*}
    L_k(e') = \sum_{(g,h) \in \mathcal{Y}_1 (e') \backslash \mathcal{Y}_t(e); g \ne h  } \sum_{e''\in \tilde{\mathcal{Y}}_t(g)} f_k(X_{e''_2}) \tilde S_{\ell - t -1} (h),
\end{align*}
and $\tilde{\mathcal{Y}}_t(g)$, $\tilde S_{\ell - t -1}(h)$ are the variables $ \mathcal{Y}_t(g)$, $S_{\ell - t -1}(h)$ defined on the graph $G$ where all  edges in $(G,e_2)_t$ have been removed. 
\end{defn}

\begin{defn}
We introduce a new random variable, for $v \in V$, 
\begin{align*}
    I_{k,\ell} (v) = \sum_{e \in \vec E : e_2 = v} P_{k,\ell} (e),
\end{align*}
where $P_{k,\ell}$ was defined above.
\end{defn}

\begin{prop}\label{p:indep}
Let $\ell \sim \kappa \log_\alpha n$ with $0 < \kappa < 1/2$. There exists $c  >0$, such that if $\tau, \varphi : \mathcal{G}^* \to \mathbb{R}$ are $\ell$-local, $| \tau(G,o) | \leq  \varphi (G,o)$  and $\varphi$ is non-decreasing by the addition of edges, then if $\mathbb{E} \varphi ( T,o)$ is finite, 
\begin{eqnarray*}
\mathbb{E}\ABS{\frac 1 n   \sum_{v=1}^n    \tau ( G, v)  - \mathbb{E} \tau ( T, o)  }  & \leq & c   \frac{\alpha^{\ell/2} \sqrt{ \log n}  }{n ^{\gamma/2}}   \PAR{ \PAR{ \mathbb{E} \max_{v \in [n]} \varphi^4 (G,v) } ^{1/ 4 } \vee \PAR{ \mathbb{E} \varphi^2  (T,o)}^{1/2}  },
\end{eqnarray*}
where $(T,o)$ is the random rooted tree associated to the Galton-Watson branching process defined previously started from $X(o) = \iota$ and $\iota$ has distribution Uniform $[0,1]$. Here $0<\gamma<\frac{1}{2}$.
\end{prop}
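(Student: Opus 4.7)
The plan is to follow the strategy of Proposition 36 of \cite{bordenave2018nonbacktracking}, adapted to the graphon setting, by splitting the quantity
\[
\mathbb{E}\ABS{\frac{1}{n}\sum_{v=1}^n \tau(G,v) - \mathbb{E}\tau(T,o)}
\]
via the triangle inequality into a bias term $\ABS{\mathbb{E}\tau(G,v_\star) - \mathbb{E}\tau(T,o)}$ (with $v_\star$ a uniform vertex) and a fluctuation term whose $L^1$ norm is controlled via a second moment computation. Both terms will be bounded by comparing the local neighborhood exploration around $v$ in $G$ with the Galton-Watson branching tree $(T,o)$ started from a Uniform$[0,1]$ type.

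For the bias, I would build an explicit BFS coupling between $(G,v)_\ell$ and the first $\ell$ generations of $(T,o)$. At every uncovered vertex $u$ in the exploration, the set of $G$-neighbors is a Bernoulli point process with intensity $Q(X_u,X_w)/n$ over the unexplored vertices, while in $T$ it is a Poisson$(\mu_1)$ number of children with i.i.d. types drawn from the conditional kernel $Q(X_u,\cdot)/\mu_1$. Using Assumption 1 ($Q$ bounded by $M$) and Assumption 2 (uniform row-sum), the pointwise total variation distance between the two offspring distributions is $O(1/n)$ per explored vertex plus the probability of a self-intersection (seeing an already-explored vertex). Summing the failure probabilities along the exploration and using the Poisson growth control from Lemma 23 and Lemma 29 of \cite{bordenave2018nonbacktracking} (both of which transfer verbatim since the typed branching process is still Poisson($\mu_1$) in size), the coupling succeeds with probability at least $1-O(\alpha^{2\ell}/n)$. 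On the failure event, $|\tau(G,v)|\le\varphi(G,v)$ and the bound $\varphi\le\varphi$ extended to the union graph gives via Cauchy-Schwarz a contribution of order $\alpha^\ell n^{-1/2}(\mathbb{E}\max_v \varphi^4(G,v))^{1/4}$.

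For the fluctuation term, I would expand
\[
\mathbb{E}\left(\frac{1}{n}\sum_v \tau(G,v) - \frac{1}{n}\sum_v \mathbb{E}\tau(G,v)\right)^2 = \frac{1}{n^2}\sum_{u,v} \mathrm{Cov}\bigl(\tau(G,u),\tau(G,v)\bigr),
\]
and split pairs $(u,v)$ according to whether their $\ell$-neighborhoods collide. When $(G,u)_\ell\cap(G,v)_\ell=\emptyset$, conditioning on the vertex types $X_1,\dots,X_n$ and the edges between the two neighborhoods (which is empty on this event) yields conditional independence, so the covariance reduces to the covariance of $\mathbb{E}[\tau(G,u)\mid \text{types}]$ and $\mathbb{E}[\tau(G,v)\mid \text{types}]$; by i.i.d.~of the types and an $L^2$-exchangeability argument this is $O(n^{-\gamma})$ per pair up to $\varphi^2$. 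When the neighborhoods collide, monotonicity of $\varphi$ in edges (so $|\tau(G,u)\tau(G,v)|\le \varphi(G,u)\varphi(G,v)$ on the merged graph), together with a union-bound giving collision probability at most $O(\alpha^{2\ell}/n)$, contributes a term dominated by $\alpha^{2\ell}n^{-1}\mathbb{E}\varphi^2(T,o)$. Summing the two cases, dividing by $n^2$, and taking square roots yields the announced $\alpha^{\ell/2}\sqrt{\log n}\,n^{-\gamma/2}$ rate.

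The main obstacle is the bookkeeping for the collision bound in the graphon case: the types $X_v$ are continuous, so the naive SBM calculation of intersection probabilities must be replaced by an integral estimate against $Q(x,y)/n$, and one must verify that the Poisson-type growth bounds (Lemmas 23 and 29 of \cite{bordenave2018nonbacktracking}) still give a high-probability uniform envelope $S_t(v)\le s\bar\alpha^t$ for the graphon-valued exploration. This is the only place where the continuous type space really matters, and it is handled by noting that the envelope process only depends on sizes and not types, so the $\mathrm{Poi}(\mu_1)$-offspring upper bound remains valid with an additional $\log n$-factor from the maximum over vertices, which is precisely the source of the $\sqrt{\log n}$ term in the final bound.
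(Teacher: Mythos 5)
Your overall strategy --- bias via a BFS coupling between $(G,v)_\ell$ and the Galton--Watson tree, plus a fluctuation term controlled by a second moment / collision argument --- is exactly the structure of Proposition 36 in \cite{bordenave2018nonbacktracking}, and the paper offers no independent proof of Proposition~\ref{p:indep}: it simply states that the graphon version follows by the same argument. So at the level of strategy, you are doing what the paper intends.

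There is, however, one genuine inaccuracy in the bookkeeping. You claim the per-step total-variation mismatch between the offspring distribution in $G$ and the Poisson offspring of $T$ is $O(1/n)$ plus the self-intersection probability. The $O(1/n)$ is only the Binomial-to-Poisson approximation error; it omits the dominant source, which is the deviation of the \emph{empirical} offspring intensity $\frac{1}{n}\sum_{j}Q(X_u,X_j)$ from its mean $\mu_1=\int_0^1 Q(X_u,y)\,dy$. Assumption 2 makes the integral exactly $q$ for every $x$, but the empirical row sum is a random average of $n$ i.i.d.\ bounded variables and concentrates only at rate $n^{-\gamma}$ (any $\gamma<1/2$), uniformly over $u$ with high probability --- this is precisely what the paper encodes in the definition $\bar\alpha_n=\frac{1}{n}\max_i\sum_j Q(X_i,X_j)=\alpha+O(n^{-\gamma})$ and in Lemma~\ref{l:Scontrol2}. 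That $O(n^{-\gamma})$ term, not the $O(1/n)$, is what produces the $n^{-\gamma/2}$ in the stated bound; your proposal invokes ``an $L^2$-exchangeability argument'' for the fluctuation term to produce $n^{-\gamma}$ but never threads the same effect through the coupling. Your last paragraph does note that one must carry $\bar\alpha_n$ rather than $\alpha$ through the envelope lemmas, which is exactly the right instinct --- but the proposal should make explicit that the row-sum concentration is the bottleneck for \emph{both} the bias coupling and the variance estimate, rather than attributing the coupling error to a much smaller $O(1/n)$ source. With that correction the outline is sound and matches the intended adaptation of \cite{bordenave2018nonbacktracking}.
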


Recall that $S_t$ is a Galton-Walton branching process with offspring distribution $\mathrm{Poi}(\mu_1)$.
\begin{lem}\label{l:Scontrol1}
Assume $S_0=1$. There exist $c_0,c_1>0$, such that for all $s\geq 0$,
\begin{align*}
    \mathbb{P}(\forall k\geq 1, S_k\leq s\mu_1^k)\geq 1-c_1e^{-c_0s}.
\end{align*}
\end{lem}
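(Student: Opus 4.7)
The plan is to reduce the claim to an exponential concentration for the standard additive martingale $W_k := S_k/\mu_1^k$, via a uniform exponential moment bound combined with Doob's maximal inequality. This is the classical route to sharp tail estimates on the Biggins-type martingale in a supercritical Galton-Watson tree.

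First I derive the recursion for $\phi_k(\theta) := \mathbb{E}[e^{\theta W_k}]$. Writing $S_k = \sum_{i=1}^{S_{k-1}} X_i$ with $X_i$ i.i.d.\ $\mathrm{Poi}(\mu_1)$ and using $\mathbb{E}[e^{\lambda X_i}] = \exp(\mu_1(e^\lambda - 1))$, conditioning on $S_{k-1}$ gives
\begin{align*}
\phi_k(\theta) \;=\; \phi_{k-1}\bigl(g_k(\theta)\bigr), \qquad g_k(\theta) \;:=\; \mu_1^k\bigl(e^{\theta/\mu_1^k} - 1\bigr),
\end{align*}
with $\phi_0(\theta) = e^\theta$. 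Unwinding, $\phi_k(\theta) = \exp\bigl(g_1 \circ g_2 \circ \cdots \circ g_k(\theta)\bigr)$.

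Next I show $\sup_k \phi_k(\theta_0) \leq C$ for some fixed $\theta_0 > 0$. Using $e^x - 1 \leq x + x^2$ on $[0,1]$, $g_k(\theta) \leq \theta + \theta^2/\mu_1^k$ whenever $\theta \leq \mu_1^k$. Setting $a_0 := \theta$ and $a_j := g_{k-j+1}(a_{j-1})$ for $1 \leq j \leq k$, the telescoping bound
\begin{align*}
a_j - \theta \;\leq\; \sum_{i=1}^{j} \frac{a_{i-1}^2}{\mu_1^{k-i+1}}
\end{align*}
together with the inductive hypothesis $a_i \leq 2\theta$ gives $a_j \leq \theta + 4\theta^2/(\mu_1 - 1)$, since $\sum_{m \geq 1}\mu_1^{-m} = 1/(\mu_1-1)$. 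Choosing $\theta_0 := (\mu_1 - 1)/8$ makes this bound $\leq 3\theta_0/2 \leq 2\theta_0$, so the induction closes. Therefore $\phi_k(\theta_0) \leq \exp(2\theta_0) =: C$ for every $k$. Supercriticality $\mu_1 > 1$ is essential here: it is what makes the geometric sum $\sum_m \mu_1^{-m}$ finite and prevents the compositional recursion from blowing up.

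Finally, $\{W_k\}$ is a nonnegative martingale (as $\mathbb{E}[X_i] = \mu_1$), so $\{e^{\theta_0 W_k}\}$ is a nonnegative submartingale by Jensen. Doob's maximal inequality gives, for every $n$ and $s \geq 0$,
\begin{align*}
\mathbb{P}\Bigl(\max_{1 \leq k \leq n} W_k \geq s\Bigr) \;\leq\; e^{-\theta_0 s}\,\mathbb{E}\bigl[e^{\theta_0 W_n}\bigr] \;\leq\; C\, e^{-\theta_0 s}.
\end{align*}
Letting $n \to \infty$ by monotone convergence on the events, $\mathbb{P}(\sup_k W_k \geq s) \leq C e^{-\theta_0 s}$, which is the stated bound with $c_0 = \theta_0$ and $c_1 = C$.

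The main obstacle is the uniform mgf bound of the middle step: composing the $g_k$'s naively inflates the argument at every scale, and one must verify that the quadratic correction at scale $k$ is $\theta^2/\mu_1^k$ and hence summable in $k$. Once this control is in place, both the mgf recursion and the final submartingale estimate are routine.
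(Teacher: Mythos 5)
The paper does not prove this lemma---it imports it verbatim as Lemma 23 of Bordenave--Lelarge--Massouli\'e---so there is no in-text argument to compare against; your self-contained proof via the compositional mgf recursion $\phi_k(\theta)=\exp(g_1\circ\cdots\circ g_k(\theta))$ and Doob's maximal inequality for the submartingale $e^{\theta_0 W_k}$ is correct, and it is the standard route to exponential tail bounds for the additive Galton--Watson martingale. The one step you leave implicit---that the inductive bound $a_{j-1}\leq 2\theta_0=(\mu_1-1)/4<\mu_1\leq\mu_1^{\,k-j+1}$ keeps every iterate in the regime where $e^x-1\leq x+x^2$ applies---does indeed close the induction, so no gap remains.
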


We shall denote by $S_t(v)$ the set of vertices at distance $t$ from $v$.
We introduce 
\begin{align}
\bar \alpha_n  = \frac{1}{n}\max_{i \in [n]} \sum_{j=1}^n Q(X_i,X_j)=\alpha + O ( n^{-\gamma}).
\end{align}

\begin{lem}\label{l:Scontrol2}
There exist $c_0, c_1 >0$ such that for all $s \geq 0$ and for any $w \in [n] \cup \vec E(V)$,    
\begin{align*}
    \mathbb{P} \left( \forall t \geq 0: S_{t} (w)   \leq  s \bar \alpha_n^t  \right)  \geq 1 -  c_1 e^{ - c_0 s}. 
\end{align*}
Consequently, for any $p\geq 1$, there exists $c >0$ such that 
\begin{align*}
    \mathbb{E} \max_{v \in [n] , t \geq 0} \left(  \frac{ S_{t} (v) }{ \bar \alpha^t_n} \right )^p \leq c (\log n)^p.
\end{align*}
\end{lem}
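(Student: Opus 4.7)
The plan is to mimic the argument of Lemma 29 in \cite{bordenave2018nonbacktracking}, replacing the constant offspring means per type by the uniform bound $\bar\alpha_n$ that is available in the graphon setting once the types $(X_i)_{i\in [n]}$ are fixed. All probabilities below are taken conditionally on $(X_1,\dots,X_n)$, so the bound $\bar\alpha_n$ is treated as deterministic; the unconditional statement follows because the estimates are uniform.

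First I would couple the breadth-first exploration of $G$ from the root $w$ (either a vertex $v\in [n]$ or an oriented edge $e\in \vec E(V)$) with a dominating Galton--Watson process. In the vertex case, each newly discovered vertex $u$ has a set of unexplored neighbors whose size, given the explored part, is $\mathrm{Bin}(n',Q(X_u,\cdot)/n)$ for some $n'\le n$; this is stochastically dominated by $\mathrm{Bin}(n,\bar\alpha_n/n)$ by definition of $\bar\alpha_n=\frac1n\max_i\sum_j Q(X_i,X_j)$. In the oriented edge case the same stochastic domination holds after removing the previously used half-edge, since non-backtracking removes at most one potential child. Let $\tilde S_t$ denote the size of generation $t$ of a Galton--Watson process with offspring law $\mathrm{Bin}(n,\bar\alpha_n/n)$; by the coupling, $S_t(w)\le \tilde S_t$ for every $t\ge 0$.

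Next I would apply an analog of Lemma \ref{l:Scontrol1} to $\tilde S_t$. The proof of Lemma \ref{l:Scontrol1} in \cite{bordenave2018nonbacktracking} is based on a Doob-martingale/Chernoff argument for the martingale $\tilde S_t/\bar\alpha_n^t$, which requires only that the offspring distribution has a finite exponential moment of some fixed order uniform in $n$; both $\mathrm{Bin}(n,\bar\alpha_n/n)$ and its Poisson limit satisfy this, with constants depending only on $\mu_1=\alpha$ (since $\bar\alpha_n\to\alpha$). This yields absolute constants $c_0,c_1>0$ such that
\begin{equation*}
\mathbb{P}\bigl(\forall t\ge 0:\ \tilde S_t\le s\,\bar\alpha_n^t\bigr)\ge 1-c_1 e^{-c_0 s},
\end{equation*}
and transferring through the coupling gives the first assertion.

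For the moment bound I would integrate the tail and take a union bound over the $n$ possible roots. The first part yields
\begin{equation*}
\mathbb{P}\Bigl(\max_{v\in [n]}\max_{t\ge 0}\frac{S_t(v)}{\bar\alpha_n^t}>s\Bigr)\le n\,c_1 e^{-c_0 s},
\end{equation*}
so that for any $p\ge 1$,
\begin{equation*}
\mathbb{E}\max_{v,t}\Bigl(\frac{S_t(v)}{\bar\alpha_n^t}\Bigr)^p
= p\int_0^\infty s^{p-1}\mathbb{P}\Bigl(\max_{v,t}\frac{S_t(v)}{\bar\alpha_n^t}>s\Bigr)ds
\le p\int_0^\infty s^{p-1}\min\bigl(1,n c_1 e^{-c_0 s}\bigr)ds.
\end{equation*}
Splitting the integral at $s=(2/c_0)\log n$ the first piece contributes $O((\log n)^p)$ and the second piece contributes $O(1)$, giving the claimed $O((\log n)^p)$ bound. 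The main obstacle is verifying that the Chernoff step in the analog of Lemma \ref{l:Scontrol1} goes through with the Binomial offspring and absolute constants independent of $n$; this reduces to uniform control of the moment generating function of $\mathrm{Bin}(n,\bar\alpha_n/n)$, which is straightforward since $\bar\alpha_n$ is bounded by $M$. Everything else is a transcription of the BLM argument.
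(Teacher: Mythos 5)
The paper does not write out a proof for this lemma; it just declares that the graphon analogues of the BLM lemmas follow by the same arguments, so your sketch is exactly the intended route (adapt Lemma 29 of \cite{bordenave2018nonbacktracking}: exploration coupling, the exponential-martingale argument behind Lemma \ref{l:Scontrol1}, then a union bound and tail integration for the moment estimate), and the tail-splitting computation at the end is carried out correctly.

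One small imprecision in the coupling step: a Poisson-binomial offspring variable with individual parameters $Q(X_u,X_j)/n\le M/n$ and parameter sum $\le \bar\alpha_n$ is \emph{not} in general stochastically dominated (in the usual order) by $\mathrm{Bin}(n,\bar\alpha_n/n)$ --- e.g.\ $\mathrm{Ber}(0.9)$ versus $\mathrm{Bin}(2,0.45)$ --- so the assertion ``by the coupling, $S_t(w)\le\tilde S_t$'' as literally stated does not hold. What does hold, and is all that the Chernoff/exponential-supermartingale step actually uses, is the MGF bound
\begin{align*}
\prod_j\bigl(1+p_j(e^\theta-1)\bigr)\le \exp\bigl(\bar\alpha_n(e^\theta-1)\bigr),
\end{align*}
i.e.\ domination of the offspring by $\mathrm{Poi}(\bar\alpha_n)$ in the exponential (convex) order. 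Replacing ``stochastically dominated'' with this MGF domination leaves the rest of your argument unchanged, and it matches what the BLM proof does anyway.
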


\begin{prop}
Let $\ell \sim   \kappa \log_\alpha n$ with $0 < \kappa < \gamma/5$. For any $ k\in [r_0]$, there exists $\rho'_k >0$ such that w.h.p.
\begin{align*}
    \frac 1 {\alpha n} \sum_{e \in \vec E}  \frac{P^2_{k,\ell} (e) }{ \mu_k ^{4 \ell}}  \to  \rho'_k.
\end{align*}
\end{prop}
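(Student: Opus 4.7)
My plan is to prove this by applying the local-law style Proposition \ref{p:indep} to the functional
\begin{align*}
\tau_{k,\ell}(G,v) \;=\; \frac{1}{\mu_k^{4\ell}} \sum_{e \in \vec E:\, e_2 = v} P_{k,\ell}^2(e),
\end{align*}
so that $\frac{1}{\alpha n}\sum_{e\in\vec E} P_{k,\ell}^2(e)/\mu_k^{4\ell} = \frac{1}{\alpha n}\sum_{v=1}^n \tau_{k,\ell}(G,v)$, and identify the limit $\rho'_k$ as $\alpha^{-1}\mathbb{E}\,\tau_{k,\ell}(T,o)$ in the $\ell\to\infty$ limit using Lemma \ref{l:weakconverge}. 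This is exactly the graphon analogue of the SBM argument in \cite{bordenave2018nonbacktracking}, with eigenvector coordinates $\phi_k(\sigma(\cdot))$ replaced by $f_k(X(\cdot))$ and the discrete type distribution replaced by $X(o)\sim\mathrm{Unif}[0,1]$.

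First, I would verify that $\tau_{k,\ell}$ is an $\ell$-local functional of the rooted graph: by inspection of the definition of $P_{k,\ell}(e)$ in \eqref{eq:defPkl}, each $P_{k,\ell}(e)$ depends only on edges in the ball of (oriented) radius $\ell$ around $e$, and since $e_2=v$, on the ball of radius $\ell$ around $v$. For the bounding function $\varphi$, note that $|f_k|\le M/\sqrt{\mu_1}$ is uniformly bounded on $[0,1]$, and the inner double sum in $L_k(e')$ is at most $(\sum_{t'}S_{t'})^2$ times a constant depending on $M$; hence
\begin{align*}
|P_{k,\ell}(e)| \;\le\; C_M \sum_{t=0}^{\ell-1} S_t(e)\cdot S_{\ell-t-1}(e)\cdot |Y_t(e)|^2
\end{align*}
is monotone under edge addition, and similarly for $\tau_{k,\ell}$. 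The required $L^4$ bound $\mathbb{E}\max_{v\in[n]}\varphi^4(G,v)\le (\log n)^{O(1)}\cdot \alpha^{O(\ell)}$ follows from Lemma \ref{l:Scontrol2}, and the $L^2$ bound on the tree from Lemma \ref{l:Scontrol1}; with $\ell\sim\kappa\log_\alpha n$ and $\kappa<\gamma/5$, the error term in Proposition \ref{p:indep} divided by the normalization $\mu_k^{4\ell}$ tends to zero, since $\mu_k^2>\alpha$ for $k\in[r_0]$.

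Next, I identify the limit on the tree. On $(T,o)$, the oriented neighborhood $\mathcal{Y}_t(e)$ for $e=(o,v)$ is simply the set of oriented edges at graph distance $t$ in the subtree $T^v$, and no edge removal ($\tilde{\mathcal Y}=\mathcal Y$) is needed because $T$ is a tree. A careful unpacking of $P_{k,\ell}(e)$ then writes it as a sum of the form $\sum_{w\in T^v}f_k(X_w)\cdot(\text{path count})$ which, after the $t$-vs-$\ell-t-1$ bookkeeping, matches the alternative representation of $Q_{k,\ell}$ (equivalently $J_{k,\ell}$ restricted to the appropriate subtree) up to a multiplicative factor that is bounded and converges. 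Applying Lemma \ref{l:weakconverge} conditionally on $X_v$ gives $P_{k,\ell}(e)/\mu_k^{2\ell}\to \alpha\mu_k f_k(X_v)/(\mu_k^2/\alpha-1)+Y_{k,X_v}$ in $L^2$, whence $\mathbb{E}\,P_{k,\ell}^2(e)/\mu_k^{4\ell}$ converges to a finite positive limit depending on $X_v$; integrating over $v$-types (uniform on $[0,1]$, weighted by the Poisson$(\mu_1)$ number of children of $o$) defines
\begin{align*}
\rho'_k \;=\; \frac{1}{\alpha}\,\mathbb{E}\sum_{e:\, e_2=o}\Big(\tfrac{\alpha\mu_k f_k(X_{e_1})}{\mu_k^2/\alpha-1}+Y_{k,X_{e_1}}\Big)^2,
\end{align*}
which is strictly positive since $f_k\ne 0$ in $L^2$ and $\mu_k^2/\alpha>1$.

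I expect the main obstacle to be the step where one passes from $\mathbb{E}\,\tau_{k,\ell}(T,o)$ to the desired limit $\alpha\rho'_k$ with a quantitative error vanishing fast enough to absorb the $\alpha^{\ell/2}$ blow-up in Proposition \ref{p:indep}: concretely, one needs $\|P_{k,\ell}(e)/\mu_k^{2\ell}\|_{L^2}$ convergence uniformly in the root type $x=X_{e_1}$ with rate controllable by $\ell$, and must keep careful track of the Poisson moments at $o$. This uniformity is exactly what the graphon version of Lemma \ref{l:weakconverge} gives (the constant $C$ is uniform in $x$), so combining it with the bounds from Lemmas \ref{l:Scontrol1}--\ref{l:Scontrol2} and the locality hypothesis closes the argument. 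The remaining translation from $L^1$ convergence (given by Proposition \ref{p:indep}) to the claimed w.h.p.\ convergence is a standard Markov-inequality step.
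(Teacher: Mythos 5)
The paper does not actually give a proof of this proposition: the subsection containing it is prefaced with ``The proofs are similar as in \cite{bordenave2018nonbacktracking}, so we omit it here,'' so the intended proof is precisely the graphon translation of Proposition~38 of Bordenave--Lelarge--Massouli\'e. Your proposal correctly reconstructs that argument: set $\tau_{k,\ell}(G,v) = \mu_k^{-4\ell}\sum_{e:e_2=v}P_{k,\ell}^2(e)$, apply Proposition~\ref{p:indep}, control the bounding function $\varphi$ via Lemmas~\ref{l:Scontrol1}--\ref{l:Scontrol2}, check that the $\alpha^{5\ell/2}/n^{\gamma/2}$-type error vanishes under $\kappa < \gamma/5$, and identify $\rho'_k$ as the $\ell\to\infty$ limit of the tree expectation via the $L^2$ convergence of the branching-process functional. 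This matches the paper's (elided) route; it is not a different proof.

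Two details in your write-up are slightly off, though they do not change the overall soundness of the plan. First, in the bound
\[
|P_{k,\ell}(e)| \le C_M \sum_{t=0}^{\ell-1} S_t(e)\cdot S_{\ell-t-1}(e)\cdot |Y_t(e)|^2
\]
the quantity $Y_t(e)$ is a type-count vector that exists only in the finite-type SBM setting; in the graphon section only $S_t(e)$ is defined (types are continuous), so the bounding function $\varphi$ must be phrased purely in terms of $S_t$ (or $N(v)=\max_{u,s}S_s(u)/\alpha^s$, as the paper does for the related bound on $\tau$ in the proof of Lemma~\ref{l:cconverge}). Second, in identifying the limit, you invoke Lemma~\ref{l:weakconverge} for $P_{k,\ell}(e)/\mu_k^{2\ell}$; but that lemma concerns $J_{k,\ell}=\sum_{y}Q_{k,\ell}(y)$ (the sum over all children of the root), whereas $P_{k,\ell}(e)$ on the tree corresponds to a single $Q_{k,\ell}(y)$-type contribution, and its $L^2$ limit is governed by the Theorem following Lemma~\ref{l:weakconverge} (the analogue of Theorem~25 in \cite{bordenave2018nonbacktracking}), not by $Y_{k,x}$. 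There is also a small inconsistency between whether the conditioning type should be $X_{e_1}$ or $X_{e_2}$. These are fixable bookkeeping errors rather than gaps in the approach.
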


\begin{defn}
We set 
\begin{align*}
    I_{k} (v) = \sum_{e : e_2 = v}  s \sqrt { n} \xi_k' (e),
\end{align*}
where $s = \sqrt{ \alpha \rho'_k}$ and $\rho'_k$ was defined above and $\xi'$ is an orientation of $\xi$.
\end{defn}

\begin{lem}\label{l:Idifference}
We have that w.h.p.
\begin{align*}
    \frac{1} n \sum_{v=1}^n  \ABS{  I_k (v)   -  \frac{I_{k,\ell} (v)}{\mu_k ^ {2 \ell} }}  = o(1).  
\end{align*}
\end{lem}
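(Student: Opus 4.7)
\textbf{Proof proposal for Lemma \ref{l:Idifference}.} The plan is to follow the strategy of Lemma 42 in \cite{bordenave2018nonbacktracking} and show that, on the oriented edge space, the rescaled vector $\psi_k(e) := P_{k,\ell}(e)/\mu_k^{2\ell}$ is, up to a scalar, an approximate eigenvector of $B$ associated with $\mu_k$, and hence close to $s\sqrt n\,\xi_k'(e)$ in $\ell^2$. The passage from $\ell^2$ on $\vec E$ to $\ell^1$ on $V$ is then handled by Cauchy--Schwarz together with the degree bound $d_1 = O(\log n)$.

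First, I would observe that by the definition \eqref{eq:defPkl} of $P_{k,\ell}(e)$ and the non-backtracking structure, the vector $\psi_k$ on $\vec E$ satisfies an approximate eigenvalue relation $B\psi_k \approx \mu_k\psi_k$ on the bulk of $\vec E$: extending every path by one non-backtracking step multiplies the count by the expected branching factor, and the averaging over local neighborhoods is precisely what produces the factor $\mu_k$ in the limit. More concretely, using that the balls $(G,e_2)_{2\ell}$ are with high probability tangle-free (Lemma \ref{l:Scontrol2}), I can express $\psi_k$ as $B^{2\ell}$ applied to a short local vector plus a negligible error, and then Theorem \ref{t:eig} gives the bulk bound $\|B^{2\ell}\|_{\mathrm{bulk}} \leq (\sqrt{\mu_1}+o(1))^{2\ell}$, which is much smaller than $\mu_k^{2\ell}$ for $k\in[r_0]$ and $\ell\sim\kappa\log_\alpha n$. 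Projecting $\psi_k$ onto the top $r_0$-dimensional eigenspace of $B$ therefore captures all but an $o(1)$ fraction of its $\ell^2$ mass.

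Second, I would identify the scalar. The normalization $s=\sqrt{\alpha\rho_k'}$ is designed precisely so that the proposition giving $\frac{1}{\alpha n}\sum_{e\in\vec E} \psi_k(e)^2 \to \rho_k'$ matches the normalization $\|\xi_k'\|_2=1$, i.e.\ $\|\psi_k\|_2^2 \approx s^2 n$. Combined with the simplicity assumption on $\mu_k$ and the spectral gap between $|\mu_k|$ and $\sqrt{\mu_1}$, the previous paragraph yields
\begin{align*}
    \frac{1}{n}\sum_{e\in\vec E}\bigl(\,\psi_k(e) - s\sqrt{n}\,\xi_k'(e)\,\bigr)^2 \stackrel{(P)}{\longrightarrow} 0,
\end{align*}
after (if needed) fixing the sign of $\xi_k'$ relative to $\psi_k$.

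Third, I would convert this $\ell^2$-control on oriented edges into the $\ell^1$-bound on vertices stated in the lemma. Writing $\Delta(e):=\psi_k(e)-s\sqrt n\,\xi_k'(e)$ and grouping edges by head,
\begin{align*}
    \frac{1}{n}\sum_{v=1}^n \Bigl|I_k(v)-\frac{I_{k,\ell}(v)}{\mu_k^{2\ell}}\Bigr|
    = \frac{1}{n}\sum_{v=1}^n \Bigl|\sum_{e:e_2=v}\Delta(e)\Bigr|
    \leq \frac{1}{n}\sum_{v=1}^n \sqrt{\deg(v)}\Bigl(\sum_{e:e_2=v}\Delta(e)^2\Bigr)^{1/2},
\end{align*}
and a further Cauchy--Schwarz in $v$ yields the bound $\sqrt{d_1}\cdot\bigl(\tfrac{1}{n}\sum_e \Delta(e)^2\bigr)^{1/2}$, which is $O(\sqrt{\log n})\cdot o(1) = o(1)$ with high probability.

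The main obstacle is the second step: carefully bookkeeping the non-backtracking path decomposition of $\psi_k$ so that the eigenequation holds up to a genuinely small error, and verifying that after removing the edges in $(G,e_2)_t$ (as required in the definition of $P_{k,\ell}$) the spectral argument still applies. This is where the tangle-free property of local neighborhoods is needed, and where the range $0<\kappa<\gamma/5$ for $\ell\sim\kappa\log_\alpha n$ becomes essential in order to make the bulk error $(\sqrt{\mu_1}/|\mu_k|)^{2\ell}$ beat the polynomial factors coming from $\|\psi_k\|_2$ and degrees.
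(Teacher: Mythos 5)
Your outline matches what the paper indicates---Section 6.2 explicitly omits the proof and points to Lemma~42 of \cite{bordenave2018nonbacktracking}, and you have correctly reconstructed its structure: an $\ell^2$ spectral comparison of $P_{k,\ell}/\mu_k^{2\ell}$ with $s\sqrt{n}\,\xi_k'$ on $\vec E$, with the scalar $s=\sqrt{\alpha\rho_k'}$ calibrated by the proposition asserting $\frac{1}{\alpha n}\sum_{e}P_{k,\ell}^2(e)/\mu_k^{4\ell}\to\rho_k'$, followed by a Cauchy--Schwarz passage to the vertex $\ell^1$ average. One minor tightening of your last step: instead of bounding $\deg(v)\le d_1=O(\log n)$ and paying a $\sqrt{\log n}$ factor, use $\bigl(\tfrac1n\sum_v \deg(v)\bigr)^{1/2}=(2|E|/n)^{1/2}=O(1)$ w.h.p.\ in the outer Cauchy--Schwarz, which yields the conclusion directly from $\tfrac1n\sum_e\Delta(e)^2=o(1)$ without needing the edge-space $\ell^2$ error to beat $1/\sqrt{\log n}$.
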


\subsection{Proof of Lemma \ref{l:cconverge}}
Recall that
\begin{align*}
    \mathcal{C}_{ij}:=\frac{1}{\sqrt{n}}\sum\limits_{\ell=1}^n B_i(V_\ell)f_j(X_\ell).
\end{align*}
Define $J_{k,\ell}$ as before with the transition kernel being $(1-\epsilon)Q$. We use $J_{k,\ell}(x)$ to emphasize that the branching process starts at $X(o)=x$. By Lemma \ref{l:weakconverge}, we have that $J_{k,\ell}(x)   \mu_k ^{-2\ell}  - \alpha  \mu_k f_k (x)/(\mu_k ^2 / \alpha -1)$ converges in $L_2$ to a centered variable $Y_{k,x}$ satisfying $\mathbb{E}  |Y_{k,x}| \leq C$ uniformly for $x\in [0,1]$. Define
\begin{align*}
    H_{k,\ell}=\mu_k^{-2\ell}\int_0^1 J_{k,\ell}(x) f_k(x)dx,
\end{align*}
and
\begin{align*}
    H_{k,\ell,2}&=\mu_k^{-4\ell}\int_{[0,1]^2} J_{k,\ell}(x) J_{k,\ell}(y) Q(x,y) dx dy.
\end{align*}
Then $\mathbb{E}[H_{k,\ell}]\rightarrow \alpha  \mu_k/(\mu_k ^2 / \alpha -1)$ and $\mathbb{E}[H_{k,\ell,2}]\rightarrow \mu_k(\alpha  \mu_k/(\mu_k ^2 / \alpha -1))^2$. Recall the definition of $P_{k,\ell}(e)$ above. Recall that
\begin{align*}
    I_{k,\ell}(v)=\sum_{e\in\vec{E}:e_2=v}P_{k,\ell}(e).
\end{align*}
We define
\begin{align*}
    G_{k,\ell}=\mu_k^{-2\ell}\frac{1}{n}\sum_{v\in V}I_{k,\ell}(v)f_k(X_v),
\end{align*}
and 
\begin{align*}
    G_{k,\ell,2}&=\mu_k^{-4\ell}\frac{1}{n^2}\sum_{u,v\in V}I_{k,\ell}(u)I_{k,\ell}(v)Q(X_u,X_v).
\end{align*}
Now, we apply Proposition \ref{p:indep}. Take $\tau(G,v)$ as $\mu_k^{-2\ell}I_{k,\ell}(v)f_k(X_v)$. Let
\begin{align*}
    N(v)=\max_{0\leq t\leq \ell} \max_{u\in (G,v)_t} \max_{s\leq 2\ell-t} (S_s(u)/\alpha^s).
\end{align*}
As we have $\mu_k^2>\alpha$, by definition of $I_{k,\ell}$,
\begin{align*}
    \tau(G,v) &\leq \alpha^{-\ell} \frac{M}{\sqrt{\alpha}}\sum_{e\in \vec{E},e_2=v} \left( \sum_{t=0}^{\ell-1}\sum_{f\in \mathcal{Y}_t(e)} N^2(v)\alpha^{t+1}\alpha^{\ell-t}\right)\\
    &=N^2(v)\alpha \frac{M}{\sqrt{\alpha}}\sum_{e\in \vec{E},e_2=v} \left( \sum_{t=0}^{\ell-1}\sum_{f\in \mathcal{Y}_t(e)} 1\right).
\end{align*}
Hence $\tau(G,v)\leq C N(v)^3 \alpha^{\ell} =:\varphi(G,v)$. Then by Lemma \ref{l:Scontrol2}, we have $\mathbb{E}[\max_v \varphi(G,v)^4]=O(\log(n)^{12} \alpha^{4\ell})$. By Lemma \ref{l:Scontrol1} the same holds for $\varphi(T,o)$. Therefore, by Proposition \ref{p:indep}, we have that 
\begin{align*}
    \mathbb{E}|G_{k,\ell}-\mathbb{E}H_{k,\ell}| = o(1).
\end{align*}
Similarly, we consider $|G_{k,\ell,2}-\mathbb{E}H_{k,\ell,2}|$, which equals
\begin{align*}
    &\left|\mu_k^{-4\ell}\frac{1}{n^2}\sum_{u,v\in V}I_{k,\ell}(u)I_{k,\ell}(v)Q(X_u,X_v)-\mathbb{E}H_{k,\ell,2}\right|\\
    \leq &\mu_k^{-2\ell} \frac{1}{n}\sum_{u\in V}I_{k,\ell}(u)\left|\left(  \mu_k^{-2\ell} \frac{1}{n}\sum_{v\in V}I_{k,\ell}(v) Q(X_u,X_v)- \mathbb{E}[\mu_k^{-2\ell}\int_{[0,1]} J_{k,\ell} Q(X_u,y) dy]   \right)\right|\\
    +& \left|\mu_k^{-2\ell} \frac{1}{n}\sum_{u\in V}I_{k,\ell}(u)\mathbb{E}[\mu_k^{-2\ell}\int_{[0,1]} J_{k,\ell} Q(X_u,y) dy]-\mathbb{E}[\mu_k^{-4\ell}\int_{[0,1]^2} J_{k,\ell}(x) J_{k,\ell}(y) Q(x,y) dx dy]\right|.
\end{align*}
We apply Proposition \ref{p:indep} to both terms and take $\varphi$ similarly as above. So we have that
\begin{align*}
    \mathbb{E}|G_{k,\ell,2}-\mathbb{E}H_{k,\ell,2}|\rightarrow 0,
\end{align*}
as $n$ goes to infinity. This implies that $G_{k,\ell}$ converges to $\alpha  \mu_k /(\mu_k ^2 / \alpha -1)$ in probability. Further, $G_{k,\ell,2}$ converges to $\mu_k(\alpha  \mu_k/(\mu_k ^2 / \alpha -1))^2$ in probability. Recall the definition of 
\begin{align*}
    I_k(v)=\sum_{e:e_2=v}s\sqrt{n}\xi'_k(e),
\end{align*}
where $s=\sqrt{\alpha \rho_k'}$ and $\rho_k'$ was defined above, and $\xi'_k$ being an orientation of $\xi_k$. Let $O_k\in \{-1,1\}$ such that $\xi'_k(e)=O_k\xi_k(e)$. By Lemma \ref{l:Idifference}, we have that
\begin{align*}
    \frac{1}{n}\sum_{v=1}^n|I_k(v)-\frac{I_{k,\ell}}{\mu_k^{2\ell}}|=o(1).
\end{align*}
This implies that
\begin{align*}
\left|O_k\mathcal{C}_{kk}-\frac{G_{k,\ell}}{s}\right|&=\left|O_k\mathcal{C}_{kk}-\frac{1}{sn}\sum_{v\in V}f_k(X_v)\frac{I_{k,\ell}}{\mu_k^{2\ell}}\right|\\
&=\left|\frac{1}{n}\sum_{v\in V}f_k(X_v)\sum_{e:e_2=v}\sqrt{n}\xi'_k(e)-\frac{1}{sn}\sum_{v\in V}f_k(X_v)\frac{I_{k,\ell}}{\mu_k^{2\ell}}\right|\\
&=\left|\frac{1}{sn}\sum_{v\in V}f_k(X_v)I_k(v)-\frac{1}{sn}\sum_{v\in V}f_k(X_v)\frac{I_{k,\ell}}{\mu_k^{2\ell}}\right|\\
&\leq \sup f_k \frac{1}{sn}\sum_{v\in V}\left|I_k(v)-\frac{I_{k,\ell}}{\mu_k^{2\ell}}\right|=o(1).
\end{align*}
Further,
\begin{align*}
\left|\sum_{i=1}^\infty\mu_i\mathcal{C}_{ki}^2-\frac{G_{k,\ell,2}}{s}\right|&= \left|\frac{1}{s^2n^2}\sum_{u,v\in V}Q(X_v,X_u)(I_k(v) -\frac{I_{k,\ell}(v)}{\mu_k^{2\ell}})(I_k(u) +\frac{I_{k,\ell}(u)}{\mu_k^{2\ell}})\right|\\
&\leq \frac{M}{s^2n^2}\sum_{u,v\in V}\left|I_k(v) -\frac{I_{k,\ell}(v)}{\mu_k^{2\ell}}\right|\left|I_k(u) +\frac{I_{k,\ell}(u)}{\mu_k^{2\ell}}\right|=o(1).
\end{align*}
Therefore, the statement follows.

\printbibliography
%\bibliography{reference}

\end{document}